\def\be{\beta}
\def\de{\delta}
\def\si{\sigma}
\def\om{\omega}
\def\De{\Delta}
\def\Om{\Omega}
\let\on=\operatorname
\def\R{\mathbb R}
\def\ol{\overline}
\def\wt{\widetilde}
\def\x{\times}
\def\one{{\mathbbm 1}}
\def\cA{{\mathcal A}}
\def\fC{{\mathfrak C}}
\def\bD{{\mathbb D}}
\def\cD{{\mathcal D}}
\def\bE{{\on{\,\mathbb E}}}
\def\cF{{\mathcal F}}
\def\cG{{\mathcal G}}
\def\bH{{\mathbb H}}
\def\bN{{\mathbb N}}
\def\bS{{\mathbb S}}
\def\bU{{\mathbb U}}
\def\bX{{\mathbb X}}
\def\bY{{\mathbb Y}}
\def\rd{\mathrm{d}}
\def\cP{\mathcal P}
\def\bP{\mathbb P}
\def\bQ{\mathbb Q}
\def\cS{\mathcal S}
\def\bZ{\mathbb Z}
\def\po{\text{p.o.}} 
\def\se{\text{se.}}  
\theoremstyle{plain}
\newtheorem{theorem}{Theorem}
\newtheorem{proposition}{Proposition}
\newtheorem{lemma}{Lemma}
\newtheorem{assumption}{Assumption}
\newtheorem{definition}{Definition}
\theoremstyle{definition}
\newtheorem{remark}{Remark}
\newtheorem*{notation*}{Notation}
\newtheorem*{remark*}{Remark}
\newtheorem{example}{Example}
\let\oldautoref\autoref
\renewcommand\autoref[1]{\@first@ref#1,@}
\def\@throw@dot#1.#2@{#1}
\def\@set@refname#1{
    \edef\@tmp{\getrefbykeydefault{#1}{anchor}{}}%
    \def\@refname{\@nameuse{\expandafter\@throw@dot\@tmp.@autorefname}s}%
}
\def\@first@ref#1,#2{%
  \ifx#2@\oldautoref{#1}\let\@secondref\@gobble
  \else%
    \@set@refname{#1}
    \@refname~\ref{#1}
    \let\@secondref\@second@ref
  \fi%
  \@secondref#2%
}
\def\@second@ref#1,#2{%
  \ifx#2@ and~\ref{#1}\let\@nextref\@gobble
  \else, \ref{#1}
    \let\@nextref\@next@ref
  \fi%
  \@nextref#2%
}
\def\@next@ref#1,#2{%
   \ifx#2@, and~\ref{#1}\let\@nextref\@gobble
   \else, \ref{#1}
   \fi%
   \@nextref#2%
}
\let\oldtheequation\theequation
\def\tagform@#1{\maketag@@@{\ignorespaces#1\unskip\@@italiccorr}}
\renewcommand{\theequation}{(\oldtheequation)}
\begin{document}

\title{%
Two-Armed Restless Bandits with Imperfect Information: 
Stochastic Control and Indexability%
\thanks{We are grateful to Richard Holden, Peter Michor, Derek Neal, Ariel Pakes, Yuliy Sannikov, Mete Soner, Josef Teichmann, and seminar participants at Barcelona GSE and Harvard University for helpful comments. Financial support from the Education Innovation Laboratory at Harvard University is gratefully acknowledged. Correspondence can be addressed to the authors by e-mail: \url{rfryer@fas.harvard.edu} [Fryer] or \url{philipp.harms@stochastik.uni-freiburg.de} [Harms]. The usual caveat applies.}\bigskip}
\author{Roland G. Fryer, Jr.\\Harvard University and NBER
\and Philipp Harms\\Freiburg University}
\date{March 2017}
\maketitle

\begin{abstract} 
We present a two-armed bandit model of decision making under uncertainty where the expected return to investing in the ``risky arm'' increases when choosing that arm and decreases when choosing the ``safe'' arm. These dynamics are natural in applications such as human capital development, job search, and occupational choice. Using new insights from stochastic control, along with a monotonicity condition on the payoff dynamics, we show that optimal strategies in our model are stopping rules that can be characterized by an index which formally coincides with Gittins' index. Our result implies the indexability of a new class of restless bandit models.
\end{abstract}

\pagebreak

\section{Introduction.}\label{sec:introduction}

Bandit models are decision problems where, at each instant of time, 
a resource like time, effort, or money has to be allocated strategically between several options, referred to as the arms of the bandit. When selected, the arms yield payoffs that typically depend on unknown parameters. Arms that are not selected remain unchanged and yield no payoff. The key idea in this class of models is that agents face a tradeoff between experimentation (gathering information on the returns to each arm) and exploitation (choosing the arm with the highest expected value).

Over the past sixty years, bandit models have become an important framework in economic theory, applied mathematics and probability, and operations research. They have been used to analyze problems as diverse as market pricing, the optimal design of clinical trials, product search and the research and development activities of firms (\citet{Rothschild,Berry,BoltonHarris1999,KellerRady2010}). To understand how firms set prices without a clear understanding of their demand curves, \citet{Rothschild} posits that firms repeatedly charge prices and observe the resulting demand. Setting prices too high or too low is costly for firms (experimentation), but allows them to learn about the optimal price (exploitation). In the optimal design of clinical trials, \citet{Berry} formulate the problem as: given a fixed research budget, how does one allocate effort among competing projects, whose properties are only partially known at a given point in time but may be better understood as time passes. In product search, customers sample products to learn about their quality. Their optimizing behavior can be described as in \citet{BoltonHarris1999,BoltonHarris2000}. In these models, news about the quality of the product arrive continuously. The situation where news arrive only occasionally, e.g.\@ in the form of break-throughs in research, is modeled by Keller et al. \citep{KellerRadyCripps,KellerRady2010}.

An important assumption in the classical bandit literature is that the reward distribution of arms that are not chosen does not evolve; they rest (\citet{Gittins2011}). This assumption seems natural in many applications. Yet, in many other important scenarios, it seems overly restrictive.\footnote{The importance of relaxing this assumption has been recognized early on in the seminal work of \citet{Whittle1988}, who proposed clinical trials, aircraft surveillance, and assignment of workers to tasks as potential applications.} Consider, for instance, the possibility of dynamic complementarities in human capital production.\footnote{\citet{CunhaHeckman} make a similar argument in a different context.} Imagine a student who has the choice of whether or not to invest effort into her school work. Today's effort is rewarded by being more at ease with tomorrow's course work, or the ability to glean a deeper understanding from class lectures. As \citet{CunhaHeckman2010} note, ``learning begets learning.'' Conversely, not doing one's assignments today might give instantaneous gratification, but makes tomorrow's school work harder. More generally, this dynamic can be found in the context of human capital formation when early investments in human capital increase the expected payoff of future investments, while a lack of early investments has the reverse effect. These dynamics require arms that evolve even when they are not used.

As a second example, consider an unemployed worker looking for a job. With every job application, she gathers both information about the job market and experience in the application process, which typically increases her chances of successful future job applications. Conversely, not actively searching for a job may decrease the probability of finding a job in future applications. This is empirically well-documented (\citet{kroft2012duration}) and could be due to market penalties for unemployment spells, being disconnected from the changing characteristics of the job market and the application process, or be considered a signal of low motivation by potential employers. 

Bandits whose inactive arms are allowed to evolve are known as restless bandits.\footnote{Bandits where the active and passive action have opposite effects on payoffs are called \emph{bi-directional bandits} (\citet{Glazebrook2006b}), and our model falls into this class.} Generally, optimal strategies for restless bandits are unknown.\footnote{Numerical solutions can be obtained by (possibly approximate) dynamic programming or a linear programming reformulation of the problem (\citet{Kushner,Powell2007,NinoMora2001}).} Nevertheless, when a certain indexability condition is met, Whittle's index \cite{Whittle1988} can lead to approximately optimal solutions (\citet{Weber1990,Weber1991}). This index plays the same fundamental role for restless bandits that Gittins' index \cite{Gittins1979} has for classical ones: it decomposes the task of solving multi-armed bandits into multiple tasks of solving bandits with one safe and one risky arm. The safe arm yields constant rewards and can be interpreted as a cost of investment in the risky arm. Deriving conditions that identify general classes of indexable restless bandit models is an important contribution---permitting more complete analysis of decision problems in which choices jointly effect instantaneous payoffs as well as the distribution of those payoffs in the future---and the subject of this paper.

The origins of this work are the classical bandit models of \citet{BoltonHarris1999}, \citet{KellerRady2010}, and \citet{CohenSolan}, that we extend to the restless case. In these  works, the reward from the risky arm is Brownian motion, a Poisson process, or a Levy process. The unobserved quantity is a Bernoulli variable. Our model is an extension of these models containing them as special cases.\footnote{However, some of these works focus on strategic equilibria involving multiple agents, whereas we only treat the single agent case.} Namely, we allow the same generality of reward processes with both volatility and jumps, but make the reward distribution dependent on the type of the agent and the history of past investments. The latter dependence is mediated by a real valued variable that increases while the agent invests in the risky arm and decreases otherwise. In line with our motivating examples of human capital formation and job search, we call this variable the agent's human capital. 

The bandit model is first formulated as a problem of stochastic optimal control under \emph{partial observations} in continuous time.\footnote{Modeling time as continuous allows one to treat discrete-time models with varying step sizes in a unified framework. We show in \autoref{thm:equiv} that discrete-time versions of the model converge to the continuous-time limit. This is not true in some other and recent approaches (see \autoref{rem:wel,rem:fil}).} Standard formulations of the control problem with partial observations do not work for restless bandit models (see \autoref{sec:lit:sep} for a discussion). However, we show that the frameworks of \citet{FlemingNisio}, \citet{Wonham}, and \citet{Kohlmann} can be used and extended to general controlled Markov processes. We describe these issues in detail in \autoref{sec:lit:sep}, since they are rarely discussed in the context of bandit problems. 

The first result in this paper is a \emph{separation theorem} (\autoref{thm:equiv}) that establishes the equivalence of the  control problem with partial observations to a control problem with full observations called the separated control problem. This equivalence is crucial for the solution of the problem and is implicitly used in many works, including \citet{BoltonHarris1999}, \citet{KellerRady2010}, and \citet{CohenSolan}. The separated problem is derived from the partially observable one by replacing the unobserved quantity by its filter, which is its conditional distribution given the past observations. Put differently, the filter is the belief of the agent about the hidden state variable. In the separated problem, admissibility of controls is defined without the strong measurability constraints present in the control problem with partial observations. Therefore, standard results about the existence of optimal controls and the equivalence to dynamic and linear programming can be applied.  

Our second, and main, result (\autoref{thm:main}) is the \emph{optimality of stopping rules}, meaning that it is always better to invest first in the risky arm and then in the safe arm instead of the other way round. This result hinges on the  monotonic dependence of payoffs on past investment. Intuitively, the sequence of investments matters for two reasons. First, investments in the risky arm reveal information about the distribution of future rewards. The sooner this information becomes available, the better. Second, early investments in the safe arm deteriorate the rewards of later investments in the risky arm. By contrast, early investments in the risky arm do not make the safe  arm any less profitable. 

We present an unconventional approach to show the optimality of stopping rules. The work horse of most of the bandit literature is either the Hamilton-Jacobi-Bellman (HJB) equation or a setup using time changes. The inclusion of human capital as a state variable turns the HJB equation into a second order partial differential-difference equation. It seems unlikely that explicit solutions of this equation can be found. Moreover, the approach using time changes is not well adapted to the new dynamics of our model. We circumvent these difficulties by investigating the sample paths of optimal strategies. More specifically, we discretize the problem in time and show that any optimal strategy can be modified such that the agent never invests after a period of not investing and such that the modified strategy is still optimal. This interchange argument has been originally developed by \citet{Berry} for classical bandits. It turns out that the monotonic dependence of the payoffs on the amount of past investment is exactly what is needed to generalize the argument to restless bandits.

Once the optimality of stopping rules is established, it follows easily that optimal strategies can be characterized by an \emph{index} rule. Formally, the index is the same as the one proposed in the celebrated result by \citet{Gittins1979} on classical bandits,  but inactive arms are allowed to evolve. The explicit formula for the index yields comparative statics of optimal strategies with respect to the parameters of the model. Most importantly, subsidies of the safe arm enlarge the set of states where the safe arm is optimal,  which means that our bandit model is \emph{indexable} in the sense of \citet{Whittle1988} (see \autoref{prop:indexability}). More generally, any arm of a multi-armed restless bandit that satisfies our monotonicity condition is indexable. To our knowledge, this is the first time that a sufficient condition for indexability of a general class of restless bandits with continuous state space and a corresponding rich class of reward processes has been formulated.\footnote{\label{foo:indexable}Some sensor management models are indexable and have a continuous state space after their transformation to fully observed Markov decision problems (\citet{Washburn}). This is, however, not the case in their formulation as control problems with partial observations.}

To explain the structure of optimal strategies, we consider how information is processed by agents in our model. We work in a Bayesian setting where the agent has a prior about being either ``high'' or ``low type.'' Rewards obtained from the risky arm depend on this type and are used by the agent to form a posterior belief. The current levels of belief and human capital determine at each stage whether it is optimal to invest in the risky or safe arm. Namely, there is a curve in the belief--human capital domain such that it is optimal to invest in the risky arm if the current level of belief and human capital lies to the right and above the curve. Otherwise, it is optimal to invest in the safe arm. The curve is called the \emph{decision frontier} (see \autoref{prop:frontier}). 

There is, however, an important, and potentially empirically relevant, difference to classical bandit models: not only is the safe arm absorbing---it is depreciating; agents drift further and further away from the frontier. Empirically, this implies that there are very few ``marginal'' agents, i.e., agents at the decision frontier. Programs (e.g. lower class size, school choice, financial incentives) designed to increase student achievement at the margin are likely to be ineffective unless: (a) they are initiated when students get close to the decision frontier, or (b) force inframarginal students to invest in the risky arm (e.g. some charter schools, see \citet{DobbieFryer}). Consistent with \citet{CunhaHeckman}, our model predicts that, on average, the longer society waits to invest, the more aggressive the investment needs to be. This is in stark contrast to classical bandit models, where agents accumulate at or near the frontier (in the sense of \autoref{prop:pop}), and is one of the key motivations of our model.


The paper is structured as follows. 
\autoref{sec:lit} provides a brief review of the bandit literature in economics and applied mathematics.
\autoref{sec:sto} contains the definitions of the control problems and the separation theorem. 
\autoref{sec:res} specializes the general framework of the previous section to restless bandit models satisfying the monotonicity condition and and analyzes the structure of optimal strategies. 
Finally, \autoref{sec:con} concludes.

\section{Previous literature.}\label{sec:lit}

\subsection{Bandit models.}\label{sec:lit:ban}

Originally developed by \citet{Robbins}, bandit models have been used to analyze a wide range of economic and applied math problems.\footnote{\citet{Ghosh}, \citet{Bergemann}, and \citet{Mahajan} provide excellent surveys of the literature on bandit models. The monographs by \citet{PresmanSonin}, \citet{Berry} and \citet{Gittins2011} contain more detailed presentations.} The first paper where a bandit model was used in an economic context is \citet{Rothschild}, in which a single firm facing a market with unknown demand has to determine optimal prices. Subsequent applications of bandit models include partner search, effort allocation in research, clinical trials, network scheduling and voting in repeated elections (\citet{McCall,Weitzman,Berry,Li,Banks1992}).

Classical bandits with reward processes driven by Brownian motion or a Poisson process were first solved by \citet{Karatzas} and \citet{Presman}. Subsequently, \citet{BoltonHarris1999,BoltonHarris2000} and Keller e.a. \cite{KellerRadyCripps,KellerRady2010,KellerRady2015} derived explicit formulas for optimal strategies in the case where the unobservable quantity is a Bernoulli variable and treated strategic interactions of multiple agents. \citet{CohenSolan} unified the formulas obtained for the single agent case and solved a bandit model where the reward is driven by a Levy process with unknown Levy triplet.

Many extensions and variations of classical bandit problems have been proposed, including: bandits with a varying finite or infinite numbers of arms (\citet{Whittle1981,Banks1992}), bandits where an adversary has control over the payoffs (\citet{Auer}), bandits with dependent arms (\citet{Pandey}), bandits where multiple arms can be chosen at the same time (\citet{Whittle1988}), bandits whose arms yield rewards even when they are inactive (\citet{Glazebrook2006b}), and bandits with switching costs (\citet{Banks1994}).

One of the most mathematically challenging extensions is to allow inactive arms to evolve. Such bandits are often referred to as ``restless bandits.''\footnote{Some bandits with switching costs can be modeled as restless bandits (\citet{Jun}).} This term was coined in the seminal paper of \citet{Whittle1988}. Beyond mathematical intrigue, there are many practical applications: aircraft surveillance, sensor scheduling, queue management, clinical trials, assignment of workers to tasks, robotics, and target tracking (\citet{Ny,Veatch,Whittle1988,Faihe,LaScala}). In aircraft surveillance, \citet{Ny} discuss the problem of surveying ships for possible bilge water dumping. A group of unmanned aerial vehicles can be sent to the sites of the ships. The rewards are associated with the detection of a dumping event. The problem falls into the class of sensor management problems, where a set of sensors has to be assigned to a larger set of channels whose state evolves stochastically. In linear Gaussian settings these problems can be reduced to deterministic problems and turn out to be indexable (\citet{ny2011scheduling}). In queue management, \citet{Veatch} consider the task of scheduling a make-to-stock production facility with multiple products. Finished products are stored in an inventory. Too small an inventory risks incurring backorder or lost sales costs, while too large an inventory increases holding costs. In robotics, \citet{Faihe} consider the behaviors coordination problem in a setting of reinforcement learning: a robot is trained to perform complex actions that are synthesized from elementary ones by giving it feedback about its success.

\subsection{Optimal control with partial observations.}\label{sec:lit:sep}

In control problems with partial observations, strategies are not allowed to depend on the hidden state. To enforce this constraint, one requires them to be measurable with respect to the sigma algebra generated by the observations. In continuous time, this measurability condition is not strong enough to exclude pathological cases like \autoref{ex:pat} in this paper. 

This problem was solved in a setting with additive, diffusive noise by requiring the existence of a change of measure, called Zakai's transform (\citet{FlemingPardoux}), which transforms the observation process into standard Brownian motion. Unfortunately, this approach is not amenable to bandit models, where such a change of measure does not exist because the volatility of the observation process depends on the strategy. Another approach, which was applied successfully to classical bandit models, is to define strategies as time changes (\citet{ElKaroui1994}). Unfortunately, this technique does not work for restless bandit problems, where inactive arms are allowed to evolve. 

Our approach can be seen as a generalization of \citet{FlemingNisio,Wonham,Kohlmann}. In these works, the strategies are required to be Lipschitz continuous to ensure well-posedness of the corresponding martingale problem. This excludes discontinuous strategies like cut-off rules, which are typically encountered in bandit problems. We replace the Lipschitz condition by the weaker and more direct requirement that the martingale problem is well-posed. The resulting class of admissible strategies is large enough to contain optimal strategies of classical bandit models and of the restless bandit model in \autoref{sec:res}. It is also small enough to exclude degeneracies like \autoref{ex:pat} and to admit approximations in value by piecewise constant controls (see \autoref{thm:equiv}). For piecewise constant controls the definition of admissibility is unproblematic. 

\subsection{Optimality of stopping rules.}

For classical bandit models with one safe and one risky arm, the optimality of stopping rules is a well-known result (\citet{Berry,ElKaroui1994}). Several approaches to establish this result can be found in the literature.	In one approach, the rewards of each arm are fixed in advance and strategies are time changes. The reward that is obtained under a strategy is the time change applied to the reward process. This setup, which has been proposed by \citet{Mandelbaum}, allows a very simple formulation of the measurability constraints on the strategies. It is, however, not well-suited to bandits with evolving arms.	In a second approach, one solves the Hamilton-Jacobi-Bellman (HJB) equation for the value function. When this succeeds, the explicit form of the value function can be used to establish the optimality of stopping rules (\citet{BoltonHarris1999,KellerRadyCripps,CohenSolan}). In our model, however, the dynamics of the reward distribution introduce an additional state variable, which turns the HJB equation into a non-local partial differential equation which we cannot solve directly. Moreover, the value function might not be a solution in a classical sense. \citet{Pham1995,Pham1998} showed that under suitable assumptions, the value function is a viscosity solution of the HJB equation. It remains open how this could be used to show that stopping rules are optimal. The third approach is to rewrite the problem as a linear programming problem. This makes both classical and restless bandit problems amenable to efficient numerical computations and can also yield some qualitative insight (\citet{NinoMora2001}).\footnote{Another numerical approach is dynamic programming/value function iteration.} The fourth approach (and the one we emulate) is based on a direct investigation of the sample paths of optimal strategies and an evaluation of the benefits of investing in the risky arm sooner rather than later. While this interchange argument was originally developed by \citet{Berry} for classical bandit models, it turns out that the monotonicity assumption on the payoffs is what is needed to make the argument work in the more general setting of restless bandits. 

\subsection{Indexability.}

\citet{Gittins1979} characterized optimal strategies in classical bandit models by an index that is assigned to each arm of the bandit at each instant of time. The optimal strategy is to always choose the arm with the highest index. The indices can be calculated for each arm separately, which reduces the complexity of multi-armed bandits to that of two-armed bandits with one safe and one risky arm.

In general, optimal strategies in restless bandit models do not admit an index representation. Nevertheless, a Lagrangian relaxation of the problem proposed by \citet{Whittle1988} yields index strategies that are approximately optimal (\citet{Weber1990,Weber1991}). The corresponding ``Whittle index'' (\citet{Whittle1988}) is the Lagrange multiplier in a constrained optimization problem and has an economic interpretation as a subsidy for passivity or a fair charge for operating the arm. A major challenge to the deployment of Whittle's index is that it can only be defined when a certain indexability condition is met. In this condition, each arm of the restless bandit is compared to a hypothetical arm with known and constant reward. The indexability condition holds if the set of states where the safe arm is optimal is increasing in the reward from the safe arm.\footnote{This is a monotonicity condition on the optimal strategy, which is not to be confounded with our monotonicity condition on the payoffs and the evolution of human capital.}

The question of indexability of restless bandit models is subtle and not yet fully understood. \citet{Gittins2011} give an overview of various approaches to establish the 	indexability of restless bandit models. Partial answers are known for bandits with finite or countable state spaces. Indexability of such models can be tested numerically in a linear programming reformulation of the Markov decision problem (\citet{Klimov1975}). In another line of research, \citet{NinoMora2001} showed that indexability holds for restless bandits satisfying a partial conservation law, which can be verified by running an algorithm. While this can be used to test the indexability of specific restless bandit problems, it does not provide much qualitative insight into which restless bandits are indexable. One would like to have conditions that identify general classes of indexable restless bandit models---this is the subject of this paper.

Some results in this direction have been obtained for various bandit models related to sensor management and dynamic multichannel access, see the papers of \citet{nino2008index}, \citet{liu2010indexability}, \citet{ny2011scheduling} and the survey of \citet{Washburn}. Further classes of indexable problems are the dual speed problem of \citet{Glazebrook2002}, the maintenance models of \citet{Glazebrook2006}, and the spinning plates and squad models of \citet{Glazebrook2006b}. Our paper is in line with these works in that it trades indexability for specific structural conditions.

\section{Stochastic control with partial observations.}\label{sec:sto}

\autoref{sec:set} provides the general setup. The control problem is formulated in Sections \ref{sec:po}--\ref{sec:se}. \autoref{sec:ass} contains all assumptions and \autoref{sec:equ} the main result. Some general notation can be found in \autoref{sec:app:not} in the Appendix.

\subsection{Setup.}\label{sec:set}

$\bU$ is a finite set, $\bX=\{0,1\}$, and $\bY$ is a finite dimensional vector space.\footnote{Our proofs can be generalized to finite state spaces $\bX$ at the cost of heavier notation and to compact control spaces $\bU$ at the cost of additional criteria ensuring the existence of optimal non-relaxed controls for the discretized separated problem (see e.g. the discussion after Theorem~1.21 in \citet{Seierstad}).} Controls are $\bU$-valued c\`agl\`ad processes $U$, the hidden state is an $\bX$-valued random variable $X$, and the observations are c\`adl\`ag $\bY$-valued processes $Y$. The rewards at time $t$ are given by $b(U_t,X,Y_t)$ for some measurable function $b\colon \bU \x \bX\x\bY \to \R$. Rewards are discounted exponentially at rate $\rho>0$ over an infinite horizon, and the aim is to maximize expected discounted rewards. 

The evolution of $Y$ depends on a c\`agl\`ad $\bU$-valued process $U$ and on the hidden state $X$. More specifically, the joint distribution of $X$ and $Y$ will be characterized by a controlled martingale problem associated to a linear operator
\begin{equation*}
\cA\colon \cD(\cA) 
\subseteq B(\bX\x\bY)
\to B(\bU \x \bX \x \bY),
\end{equation*}
where $B$ denotes the bounded measurable functions. The posterior probability that $X=1$ given $\{\cF^Y_t\}$ is denoted by $P$, i.e., $P$ is a $[0,1]$-valued c\`adl\`ag version of the martingale $\bE[X\mid\cF^Y_t]$. Mathematically speaking, $P$ is called filter of $X$, and economically speaking, the agent's belief in $X=1$. The joint evolution of $(P,Y)$ will be characterized by a linear operator
\begin{equation*}
\cG\colon \cD(\cG) 
\subseteq B([0,1]\x \bY)
\to B(\bU \x [0,1] \x \bY).
\end{equation*}
More specific assumptions on $\cA$, $\cG$, and the payoff function $b$ will be made in \autoref{sec:ass}.

\subsection{Control problem with partial observations.}\label{sec:po}

Our definition of controls with partial observations is non-standard and an improvement over the previous literature, as explained in \autoref{sec:lit:sep}. 

\begin{definition}[Martingale problem for $(\cA,F)$]\label{def:mgp_A}
Let $F$ be a c\`agl\`ad adapted $\bU$-valued process on Skorokhod space $D_{\bY}[0,\infty)$ with its natural filtration. $(X,Y)$ is a solution of the martingale problem for $(\cA,F)$ if there exists a filtration $\{\cF_t\}$, such that $X$ is an $\cF_0$-measurable $\bX$-valued random variable, $Y$ is an $\{\cF_t\}$-adapted c\`adl\`ag $\bY$-valued process, and for each $f \in \cD(\cA)$, 
\begin{equation*}
f(X,Y_t)-f(X,Y_0)
-\int_0^t\cA f(F(Y)_s,X,Y_s) \rd s
\end{equation*}
is an $\{\cF_t\}$-martingale. The martingale problem is called well-posed if existence and local uniqueness holds under the conditions $X=x$ and $Y_0=y$, for all $x\in \bX$ and $y\in\bY$.\footnote{For reference, existence and local uniqueness of the above martingale problem are defined in \autoref{sec:app:mgp} in the Appendix.}
\end{definition}

\begin{definition}[Control with partial observations]\label{def:po:co}
A tuple $(U,X,Y)$ is called a control with partial observations if $U=F(Y)$ holds for some process $F$ as in \autoref{def:mgp_A}, the martingale problem for $(\cA,F)$ is well-posed, and $(X,Y)$ solves the martingale problem for $(\cA,F)$.
\end{definition} 

\begin{definition}[Value of controls with partial observations]
\label{def:po:va}
The value of a control $(U,X,Y)$ with partial observations is defined as
\begin{equation*}
    J^{\po}(U,X,Y)=\bE\left[\int_0^\infty \rho e^{-\rho t} b(U_t,X,Y_t) \rd t\right].
\end{equation*}
The set of controls with partial observations satisfying $\bE[X]=p$ and $Y_0=y$ is denoted by $\fC^\po_{p,y}$. The value function for the control problem with partial observations is
\begin{equation*}
    V^\po(p,y)=\sup \left\{ J^\po(U,X,Y): 
    (U,X,Y) \in \fC^\po_{p,y}\right\}.
\end{equation*}
\end{definition}

\begin{remark}[Well-posedness condition]\label{rem:wel}
Every c\`agl\`ad $\{\cF^Y_t\}$-adapted process $U$ coincides up to a null set with $F(Y)$ for some process $F$ as in \autoref{def:mgp_A} (see \citet{delzeith2004skorohod}). Well-posedness of the martingale problem for $(\cA,F)$ is, however, a much stronger condition. From the agent's perspective, it requires the control to uniquely determine the outcome. From a mathematical perspective, it excludes pathological cases like the one presented in \autoref{ex:pat} below. It also ensures that controls can be approximated in value by piecewise constant controls, where such degeneracies cannot occur (see \autoref{thm:equiv}). 
\end{remark}

\begin{example}[Degeneracy in continuous time]\label{ex:pat}
Let $\bX=\bU=\{0,1\}$, $\bY=\R$, $\cA f(u,x,y)=u(2x-1)f_y(x,y)+\frac12 uf_{yy}(x,y)$ for each $f\in\mathcal D(A) = C^2_b(\bX\x\bY)$. The aim is to maximize $\bE[\int_0^t \rho e^{-\rho t} \rd Y_t]=\bE[\int_0^\infty \rho e^{-\rho t} b(U_t,X,Y_t) \rd t]$ over controls $(U,X,Y)$ of the problem with partial observations, where $b(u,x,y)=u(2x-1)$. The following tuple $(U,X,Y)$ satisfies all conditions of \autoref{def:po:co} except for the well-posedness condition: $X$ is a Bernoulli variable, $W$ is Brownian motion independent of $X$, $Y_t=(t+W_t)X$, $U_t=\one_{(0,\infty)}(t) X$, $F(Y)_t=\one_{(0,\infty)}([Y,Y]_t)$. Nevertheless, $U$ depends on the supposedly unobservable state $X$. Actually, $(U,X,Y)$ is optimal for the control problem with observable $X$, and should not be admitted as a control for the problem with unobservable $X$. 
\end{example}

\begin{remark}[Topology on the set of controls]
So far, there is no topology on the set of controls with partial observations. To get existence of optimal controls, one typically relaxes the control problem by allowing measure-valued controls and shows that the resulting set of admissible controls is compact under some weak topology \citep{kurtz1987martingale,ElKaroui1988}. In control problems with partial observations involving strong admissibility conditions as in \autoref{def:po:co}, the difficulty is that the set of admissible controls is not weakly closed. This difficulty can be avoided by transforming the problem into a standard problem with full observations, i.e., the separated problem. 
\end{remark}

\subsection{Separated control problem.}\label{sec:se} 

The following definition is fully standard \cite{kurtz1987martingale, KurtzStockbridge1998, oksendal2005applied}. 

\begin{definition}[Separated controls]\label{def:se:co}
A tuple $(U,P,Y)$ is called a separated control if there exists a filtration $\{\cF_t\}$ such that $U$ is an adapted, c\`agl\`ad $\bU$-valued process, $(P,Y)$ is an adapted, c\`adl\`ag $[0,1]\x\bY$-valued process, and for each $f \in \cD(\cG)$, the following process is an $\{\cF_t\}$-martingale:
\begin{equation*}
f(P_t,Y_t)-f(P_0,Y_0)-\int_0^t \cG f(U_s,P_s,Y_s) \rd s.
\end{equation*}
\end{definition} 

\begin{definition}[Value of separated controls]\label{def:se:va}
The value of a separated control $(U,P,Y)$ is
\begin{equation*}
    J^{\se}(U,P,Y)
    =\bE\left[\int_0^\infty \rho e^{-\rho t} \ol b(U_t,P_t,Y_t) \rd t\right],
\end{equation*}
where $\ol b(u,p,y)=p b(u,1,y)+(1-p) b(u,0,y)$. The set of controls $\fC^\se_{p,y}$ and the value function $V^\se(p,y)$ are defined similarly as in \autoref{def:po:co}.
\end{definition} 

\begin{remark}[Filtered martingale problem]\label{rem:fil}
Following \citet{Stockbridge}, one could try the alternative approach of defining separated controls as solutions of the filtered martingale problem for $\cA$, i.e., the process 
\begin{equation*}
\Pi_t(\rd x)=P_t \delta_1(\rd x)+(1-P_t)\delta_0(\rd x)
\end{equation*}
is $\{\cF^Y_t\}$-adapted and for each $f\in\cD(\cA)$, the process
\begin{equation*}
\int_{\bX} f(x,Y_t) \Pi_t(\rd x) - \int_{\bX} f(x,Y_0) \Pi_0(\rd x) 
- \int_0^t \int_{\bX} \cA f(U_s,x,Y_s) \Pi_s(\rd x) \rd s
\end{equation*}
is a martingale with respect to some filtration containing $\{\cF^Y_t\}$. Unfortunately, this definition does not rule out the pathological control presented in \autoref{ex:pat}, and cannot be used for this reason.
\end{remark}

\begin{remark}[Topology on the set of controls]\label{rem:se:top}
The set of separated controls can be topologized by regarding them as probability measures on the canonical space $L_\bU[0,\infty)\x D_{[0,1]\x\bY}[0,\infty)$, subject to the condition that the coordinate process solves the martingale problem in \autoref{def:se:co}. Compactness and existence of optimal controls can be obtained by relaxing the control problem. This amounts to replacing $L_\bU[0,\infty)$ by the space of measures on $\bU\times[0,\infty)$ with $[0,\infty)$-marginal equal to the Lebesgue measure and endowing this space with the vague topology \citep{jacod1981type, ElKaroui1988}. It should be noted, however, that relaxed separated controls are not filters of relaxed controls with partial observations (see \autoref{sec:app:non} in the Appendix). In other words, filtering is a non-linear operation on control problems, which does not commute with relaxation. 
\end{remark}

\subsection{Specification of the generators and assumptions.}\label{sec:ass}

We specify the operators $\cA$ and $\cG$ in a list of assumptions (Assumptions~\ref{ass:A}--\ref{ass:wel2}). Assumptions~\ref{ass:A}--\ref{ass:bigjumps} are unproblematic because they are direct conditions on the model coefficients and can be satisfied by definition. \autoref{ass:wel,ass:wel2} are more difficult to verify. They require well-posedness of certain martingale problems related to $\cA$ and $\cG$. This can be checked using standard results \cite{komatsu1973markov,stroock1975diffusion} or, in the presence of additional structure, using more specialized arguments as discussed in \autoref{sec:res:ex}.

The structure of the operator $\cA$ in the following assumption allows $Y$ to be a general Markovian semimartingale, whereas $X$ is constant. To describe the behavior of small jumps, we fix a truncation function $\chi\colon \bY\to\bY$, which is bounded, continuous, and coincides with the identity on a neighborhood of zero. 

\begin{assumption}[Operator $\cA$]\label{ass:A}
$\cD(A)=C^2_b(\bX\x\bY)$ and
\begin{align*}
\cA f(u,x,y) &= \partial_y f(x,y) \beta(u,x,y) 
+ \frac12 \partial_y^2 f(x,y) \sigma^2(u,y)
\\&\qquad
+ \int_{\bY} \Big(f(x,y+z)-f(x,y)-\partial_y f(x,y)\chi(z) 
\Big) K(u,x,y,\rd z),
\end{align*}
where $\be\colon \bU\x\bX\x\bY\to\bY$, $\sigma^2\colon \bU\x\bY\to \bY \otimes \bY$, and $K$ is a transition kernel from $\bU\x\bX\x\bY$ to $\bY\setminus\{0\}$.
\end{assumption}

The following bounds guarantee in a simple way that the value functions are finite and reduce technicalities in the proofs by avoiding additional localizations by stopping times. 

\begin{assumption}[Boundedness]\label{ass:bou}
The expressions 
\begin{align*}
b(u,x,y), 
&&
\be(u,x,y), 
&&
\sigma^2(u,y), 
&&
\int_{\bY} \big(|z|^2\wedge 1\big) K(u,x,y,\rd z)
\end{align*}
are measurable and bounded over $(u,x,y)\in\bU\x\bX\x\bY$.
\end{assumption}

The following assumption is related to Girsanov's theorem \cite[Theorem III.3.24]{Jacod} applied to the conditional laws of $Y$ given $X$. It is needed to describe the filter as a change of measure between the conditional laws.

\begin{assumption}[Girsanov]\label{ass:gir}
There exist functions $\phi_1\colon \bU\x\bY\to\bY$ and $\phi_2\colon \bU\x\bY\x\bY\to\R$ satisfying 
\begin{equation*}
\begin{aligned}
    \sigma^2(u,y) \phi_1(u,y) &= \be(u,1,y)-\be(u,0,y)
    - \int_\R \big(\phi_2(u,y,z)-1 \big) \chi(z) 
    \big( K(u,1,y,\rd z)+K(u,0,y,\rd z)\big) ,
    \\
    \phi_2(u,y,z) &=\frac{K(u,1,y,\rd z)}
    {\big(K(u,1,y,\rd z)+K(u,0,y,\rd z)\big)/2}.
\end{aligned}\end{equation*}
\end{assumption}

The following assumption on the structure of the operator $\cG$ encodes the filtering equations, which are derived in \autoref{lem:fi}. 

\begin{assumption}[Operator $\cG$]\label{ass:G}
$\cD(G)=C^2_b([0,1]\x\bY)$ and
\begin{align*}
\cG f(u,p,y) 
&= \partial_y f(p,y) \ol\beta(u,p,y) 
+\frac12 \partial_p^2 f(p,y) p^2(1-p)^2 \phi_1(u,y)^\top\sigma^2(u,y)\phi_1(u,y)
\\&\qquad
+ \partial_p\partial_y f(p,y) p(1-p)\sigma^2(u,y)\phi_1(u,y)
+ \frac12 \partial_y^2 f(p,y) \sigma^2(y,u)
\\&\qquad
+ \int_{\bY} \Big(f(p+j(u,p,y,z),y+z)-f(p,y)
\\&\qquad\qquad\quad
-\partial_p f(p,y)j(u,p,y,z)-\partial_y f(p,y)\chi(z) 
\Big) \ol K(u,p,y,\rd z),
\end{align*}
where 
\begin{align*}
\ol\beta(u,p,y) &= p\beta(u,1,y)+(1-p)\beta(u,0,y),
\\
\ol K(u,p,y,\rd z) &= p K(u,1,y,\rd z)+(1-p) K(u,0,y,\rd z),
\\
j(u,p,y,z) &= \frac{p \phi_2(u,y,z)}{p \phi_2(u,y,z)+(1-p)\big(2-\phi_2(u,y,z)\big)}-p,
\end{align*}
and where it is understood that $j(u,p,y,z)=0$ if $p\in\{0,1\}$.
\end{assumption}

The following assumption is a \citet{novikov1980conditions} condition for the uniform integrability of a stochastic exponential. It is needed in \autoref{lem:fi} to derive the filtering equations. The condition has also an information-theoretic interpretation, see \autoref{rem:asy}. The specific version of the condition is due to \citet[Th\'eor\`eme IV.3]{Lepingle}. 

\begin{assumption}[Novikov condition]\label{ass:novikov}
The following expression is bounded in $(y,u)\in\bY\x\bU$:
\begin{multline*}
    \Phi(u,y)=\frac18 \phi_1(u,y)^\top \sigma^2(u,y) \phi_1(u,y)
    \\
    + \int_{\bY} \left( 1- \sqrt{\phi_2(u,y,z)\big(2-\phi_2(u,y,z)\big)}\right) 
    \big(K(u,1,y,\rd z)+K(u,0,y,\rd z)\big) .
\end{multline*}
\end{assumption}

The following two assumptions are used to show that solutions of martingale problems related to $\cA$ and $\cG$ depend continuously on parameters (c.f. \autoref{lem:se:app}). 

\begin{assumption}[Continuity]\label{ass:con}
The expressions
\begin{align*}
\be(u,x,y), && \sigma^2(u,y), && \phi_1(u,y),
&& \int_\bY g(j(u,p,y,z),z) \ol K(u,p,y,\rd z)
\end{align*}
are continuous in $(y,u)$ for all $x\in\bX$, $p\in[0,1]$, and $g \in C_b([0,1]\x\bY)$ satisfying $g(x)=O(\lvert x\rvert^2)$ as $\lvert x\rvert \to 0$. 
\end{assumption}

\begin{assumption}[Condition on big jumps]\label{ass:bigjumps}
\begin{equation*}
\lim_{a\to\infty} \sup \Big\{
K\big(u,x,y,\{z\in\bY\colon \lvert z\rvert >a\}\big)\colon (u,x,y)\in\bU\x\bX\x\bY
\Big\}=0.
\end{equation*}
\end{assumption} 

The following two assumptions are used in various places to show that solutions of martingale problems related to the operators $\cA$ and $\cG$ exist and depend continuously on parameters. In contrast to the previous assumptions these are indirect conditions on the coefficients of the model. Some examples of how they can be verified are presented in \autoref{sec:res:ex}. General sufficient conditions are given in \cite{komatsu1973markov,stroock1975diffusion}.

\begin{assumption}[Well-posedness for the problem with partial observations]\label{ass:wel}
The martingale problem for $(\cA,F)$ is well-posed for all deterministic functions $F\colon [0,\infty)\to \bU$.
\end{assumption}

\begin{assumption}[Well-posedness for the separated problem]\label{ass:wel2}
The martingale problem for $(\cG,u)$ is well-posed\footnote{The martingale problem for $(\cG,F)$ is defined in analogy to the one for $(\cA,F)$, see \autoref{sec:app:mgp} in the Appendix.} for all $u\in \bU$.
\end{assumption}
 
\subsection{Separation and approximation result}\label{sec:equ}

\begin{theorem}[Separation and approximation]\label{thm:equiv}
The following statements hold under Assumptions \ref{ass:A}--\ref{ass:wel2}:
\begin{enumerate}[(a)]
\item The value functions of the control problems agree:
\begin{equation*}
    V(p,y):=V^\po(p,y)=V^\se(p,y)<\infty.
\end{equation*}
\item Controls can be approximated arbitrarily well in value by piecewise constant controls:
\begin{equation*}
    V(p,y)=\sup_{\de>0}V^\de(p,y),
\end{equation*} 
where $V^\de(p,y)=V^{\po,\de}(p,y)=V^{\se,\de}(p,y)$ is the value function obtained by restricting to control process $U$ which are piecewise constant on a uniform time grid of step size $\delta>0$.
\end{enumerate}
\end{theorem}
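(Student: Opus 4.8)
The plan is to establish (a) and (b) simultaneously by relating the four value functions $V^\po,V^\se,V^{\po,\de},V^{\se,\de}$ through a cycle of inequalities. Finiteness is immediate: by \autoref{ass:bou} the payoff $b$ is bounded by some $M$, and since $\rho\int_0^\infty e^{-\rho t}\,\rd t=1$, every admissible value $J$ lies in $[-M,M]$, so all four value functions are finite. The easy inequality is $V^\po\le V^\se$ (and, verbatim on the grid, $V^{\po,\de}\le V^{\se,\de}$), obtained by filtering. Given a control with partial observations $(U,X,Y)$, form the filter $P_t=\bE[X\mid\cF^Y_t]$. \autoref{lem:fi} shows that $(U,P,Y)$ solves the martingale problem for $\cG$, hence is a separated control; this is exactly what \autoref{ass:gir}, \autoref{ass:novikov}, and the form of $\cG$ in \autoref{ass:G} are designed to deliver. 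Conditioning the reward on $\cF^Y_t$ and using that $U_t,Y_t$ are $\cF^Y_t$-measurable while $\bE[X\mid\cF^Y_t]=P_t$, together with the affinity of $x\mapsto b(u,x,y)$ on $\bX=\{0,1\}$, gives $\bE[b(U_t,X,Y_t)\mid\cF^Y_t]=\ol b(U_t,P_t,Y_t)$ and hence $J^\po(U,X,Y)=J^\se(U,P,Y)$.

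Next I would prove the reverse inequality on the grid, $V^{\se,\de}\le V^{\po,\de}$, by lifting piecewise constant separated controls back to the partially observed side. The piecewise constant structure is essential here, because well-posedness of $(\cA,F)$ is only assumed for deterministic $F$ (\autoref{ass:wel}), not for general adapted ones; on each grid interval $[k\de,(k+1)\de)$ the control equals a fixed $u_k\in\bU$ conditionally on the past, so the problem reduces to the constant-control case. Proceeding inductively, I would reconstruct $(U,X,Y)$ by pasting: at step $k$ the belief $P_{k\de}$ is already $\cF^Y_{k\de}$-measurable, so $u_k$ is $\cF^Y_{k\de}$-measurable; on the interval I run the well-posed $(\cA,u_k)$-martingale problem with initial belief $P_{k\de}$ and concatenate across intervals using the pasting property of well-posed martingale problems. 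By \autoref{lem:fi} the filter of the pasted process again evolves under $\cG$ and reproduces $(P,Y)$, so the dependence of $u_k$ on $(P,Y)|_{[0,k\de]}$ becomes a dependence on $Y|_{[0,k\de]}$ alone, yielding $U=F(Y)$ for an observation-adapted piecewise constant $F$ whose martingale problem is well-posed. The lifted control has the same value.

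The remaining and hardest step is the approximation $V^\se\le\sup_{\de>0}V^{\se,\de}$, and this is where I expect the main obstacle to lie. The plan is to relax the separated problem as in \autoref{rem:se:top}, so that the relaxed controls form a compact set under the vague topology and a maximizing sequence converges to a relaxed control still solving the $\cG$-martingale problem; a chattering argument then approximates this limit in value by ordinary piecewise constant controls. The technical core is \emph{continuous dependence} of the solution $(P,Y)$ on the control, which is precisely what \autoref{ass:con}, \autoref{ass:bigjumps}, and well-posedness (\autoref{ass:wel2}) are used for, via \autoref{lem:se:app}; combined with continuity of $\ol b$ this renders $J^\se$ continuous along the approximation. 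The delicate point to respect is that filtering does not commute with relaxation (\autoref{rem:se:top}), so the relaxation must be performed on the separated side and the resulting piecewise constant approximants transferred back to the partially observed side through the lifting above, rather than relaxing the partially observed problem directly.

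Finally I would close the cycle. The filtering inequality on the grid together with the lifting gives $V^{\po,\de}\le V^{\se,\de}\le V^{\po,\de}$, so $V^\de:=V^{\po,\de}=V^{\se,\de}$, and since piecewise constant controls are admissible, $V^{\po,\de}\le V^\po$ and $V^{\se,\de}\le V^\se$. The chain
\begin{equation*}
V^\po\le V^\se\le\sup_{\de>0}V^{\se,\de}=\sup_{\de>0}V^\de=\sup_{\de>0}V^{\po,\de}\le V^\po
\end{equation*}
then forces equality throughout, giving $V^\po=V^\se=\sup_{\de>0}V^\de$, which is exactly statements (a) and (b).
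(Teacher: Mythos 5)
Your overall architecture---filtering for $V^\po\le V^\se$, lifting grid controls for $V^{\se,\de}\le V^{\po,\de}$, approximating separated controls by step controls, then closing the cycle of inequalities---is exactly the paper's proof (via \autoref{lem:fi,lem:se:app,lem:se_to_po}). The genuine gap is in your lifting step: you assert that for an \emph{arbitrary} piecewise constant separated control the belief $P_{k\de}$ is $\cF^Y_{k\de}$-measurable, hence $u_k$ is observation-adapted. That is false in general. In \autoref{def:se:co}, admissibility only requires $(U,P,Y)$ to be adapted to \emph{some} filtration $\{\cF_t\}$ under which the $\cG$-martingale problem holds; the control may be randomized by noise independent of $Y$ (say, $U_0$ a coin flip), and then neither $P$ nor $U$ is a functional of the observation path, so your per-control lifting cannot produce $U=F(Y)$. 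The paper avoids this by proving the inequality only at the level of value functions: since by \autoref{ass:wel2} the discretized separated problem is a Markov decision problem, it admits an optimal \emph{Markovian} step control $U_t=F_i(P_{t_i},Y_{t_i})$, and only such controls are lifted in \autoref{lem:se_to_po}. Your pasting induction is sound once this Markov-selection step is inserted; note also that identifying the filter of the pasted process with the original $(P,Y)$ requires the uniqueness part of \autoref{ass:wel2}, which you use only implicitly.

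On the approximation step $V^\se\le\sup_{\de>0}V^{\se,\de}$ you genuinely diverge from the paper: you propose relaxation plus a chattering argument, whereas the paper disintegrates the law of a separated control over the control path, shows via uniqueness of the filtered martingale problem (Kurtz--Nappo) that the value $J(U)$ of a deterministic control path is well defined, and then proves $J(\Psi^{1/n}U)\to J(U)$ by stable convergence and tightness. Two caveats about your route. First, relaxation buys you nothing here: the task is to approximate an \emph{arbitrary given} control in value, not to extract an optimizer from a maximizing sequence, so compactness of the relaxed control set is beside the point. Second, before any chattering argument can be meaningful for randomized controls, you need to know that the value of a separated control is determined by the conditional law given the control path---which is precisely the disintegration-plus-uniqueness step of the paper's \autoref{lem:se:app}; without it, ``approximate in value'' is not well defined control-by-control, and your plan quietly presupposes the lemma it is meant to replace. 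You are right, however, to insist that the relaxation be performed on the separated side only, since filtering does not commute with relaxation.
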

 
\begin{remark}\label{rem:after_thm_equ}
\begin{itemize}
	\item The importance of \autoref{thm:equiv} lies in its capacity to transform the control problem with partial observations into a problem which can be analyzed and solved by standard methods like dynamic programming or linear programming (see \autoref{sec:lit:sep} for some background and further references). The approximation result guarantees that the class of admissible strategies is small enough to exclude degeneracies like \autoref{ex:pat}. It is also large enough to guarantee the existence of optimal strategies in the restless bandit problem presented in \autoref{sec:res}. In the general case, existence of optimal strategies can be guaranteed by the standard technique of allowing relaxed (measure-valued) controls, as described in \autoref{rem:se:top}. 
\item The intuition behind \autoref{thm:equiv} is that rational Bayesian agents base their strategy on the posterior distribution $P_t$ of the unknown state and the public information $Y_t$. 
\end{itemize}
\end{remark}
    
\autoref{thm:equiv} follows from a sequence of lemmas, which can be found in \autoref{sec:app:sto} in the Appendix. We now give a verbal proof of the theorem, highlighting the role that each individual lemma plays.

\begin{proof}[Proof of \autoref{thm:equiv}]
By \autoref{ass:bou} the reward function $b$ is bounded, which implies that all value functions are finite. If $(U,X,Y)$ is a control with partial observations and $P$ is a c\`adl\`ag version of the martingale $\bE[X\mid\cF^Y_t]$, then $(U,P,Y)$ is a separated control with the same value by \autoref{lem:fi}. Taking the supremum over all controls or step controls, one obtains that
\begin{align*}
    V^{\po}(p,y)\leq V^{\se}(p,y), 
    &&
    V^{\po,\de}(p,y)\leq V^{\se,\de}(p,y).
\end{align*}
By \autoref{lem:se:app}, separated controls can be approximated arbitrarily well in value by separated step controls. Formally, this is expressed by the equation
\begin{equation*}
    \sup_{\de>0} V^{\se,\de}(p,y)= V^\se(p,y).
\end{equation*}
In \autoref{lem:se_to_po}, it is shown that Markovian step controls of the separated problem can be transformed into controls of the problem with partial observations of the same value. This is done by a recursive construction, stitching together solutions $(X,Y)$ of the martingale problem associated to $\cA$ under constant controls corresponding to each step of the control process. As optimal Markovian controls exist for the discretized separated problem,
\begin{equation*}
    V^{\se,\de}(p,y) \leq V^{\po,\de}(p,y).
\end{equation*}
Taken together, this implies that 
\begin{equation*}
V^{\se,\de}(p,y) = V^{\po,\de}(p,y)
\end{equation*}
and
\begin{equation*}
    V^\po(p,y) \leq V^{\se}(p,y)
    =\sup_\de V^{\se,\de}(p,y)
    = \sup_\de V^{\po,\de}(p,y) 
    \leq V^\po(p,y).  \qedhere
\end{equation*}
\end{proof}

\section{A restless bandit model.}\label{sec:res}

We introduce and solve a specific restless bandit model motivated by dynamic complementarities in the production of human capital.\footnote{We point out that the use of our model is not restricted to human capital production. Complementarities between past and future investments arise in many other applications such as e.g.~ job search (see \autoref{sec:introduction}).} The bandit model has a ``safe'' arm with constant payoffs corresponding to the absence of investment in human capital. The second arm is ``risky'' and corresponds to investment in human capital. The risky arm has stochastic payoffs, which depend on an unobserved ``type'' $X$ of the agent and her level of ``human capital'' $H$. The key assumption of the model is that investments increase the level of human capital, which in turn renders future investments more profitable (\autoref{ass:mon}). This complementarity is well documented in the literature on human capital formation (see e.g. \citet{CunhaHeckman2010} and references therein). Mathematically speaking, it represents the only departure from the class of L\'evy bandits studied by \citet{CohenSolan}. 

The restless bandit model is formulated in \autoref{sec:res:set}. Some examples are given in \autoref{sec:res:ex}, and the model is solved in \autoref{sec:red}. The asymptotics of the filter and strategy turn out to be similar to the classical case (see Sections \ref{sec:res:ind}--\ref{sec:res:tra}), but an important and potentially empirically relevant difference emerges in the analysis of populations of agents in \autoref{sec:res:evo}: in the long-run, all agents move away from the decision frontier. This makes untargeted incentives for investment ineffective and is one of the main motivations for the model at hand. 

\subsection{Setup and assumptions.}\label{sec:res:set}

The general framework of \autoref{sec:sto}, including Assumptions \ref{ass:A}--\ref{ass:wel2}, remains in place. The following structural assumption encodes that 
\begin{inparaenum}[(a)]
\item the observation process $Y=(H,R)$ takes values in $\bH\x\R=\R^2$,
\item the process $H$ has deterministic increments depending only on $U$ and $H$, 
\item under a choice $U=0$ of the safe arm, the reward process $R$ has constant increments, and
\item under a choice $U=1$ of the risky arm, the reward process $R$ has stochastic increments depending on $X$ and $H$.
\end{inparaenum}

\begin{assumption}[Structural assumption]\label{ass:str}
$\bU=\{0,1\}$, $\bY=\bH\x\R=\R^2$, $Y=(H,R)$. The coefficients $(\beta,\sigma,K)$ of the generator $\cA$ in \autoref{ass:A} are of the form 
\begin{equation*}
\begin{aligned}
\beta(1,x,h,r)&=
\begin{pmatrix}\beta_H(1,h)\\ \beta_R(x,h)\end{pmatrix},
&
\beta(0,x,h,r)&=
\begin{pmatrix}\beta_H(0,h)\\ k\end{pmatrix},
\\
\sigma^2(1,h,r)&=
\begin{pmatrix}
0 & 0 \\ 0 & \sigma_R^2(h)
\end{pmatrix},
&
\sigma^2(0,h,r)&=
\begin{pmatrix}
0 & 0 \\ 0 & 0
\end{pmatrix},
\\
K(1,x,h,r,\rd h,\rd r)&=
\delta_0(\rd h) K_R(x,h,\rd r),
&
K(0,x,h,r,\rd h,\rd r) &= 0,
\end{aligned}\end{equation*}
where
\begin{align*}
k\in\R,
&&
\beta_H\colon\bU\x\bH\to\bH,
&&
\beta_R\colon\bX\x\bH\to\R,
&&
\sigma_R:\bH\to\R,
\end{align*}
and $K_R$ is a transition kernel from $\bX\x\bH$ to $\R \setminus \{0\}$ satisfying $\sup_{x,h}\int_\R |r|^2\wedge |r| K_R(x,h,\rd r)<\infty$.
\end{assumption}

In line with the literature on L\'evy bandits, the reward received at time $t$ is the infinitesimal increment $\rd R_t$. To rewrite this in terms of a reward function $b(U_t,X,H_t)$ we impose the condition
\begin{equation*}
\bE\left[\int_0^\infty \rho e^{-\rho t} \rd R_t\right] 
= 
\bE\left[\int_0^\infty \rho e^{-\rho t} b(U_t,X,H_t) \rd t\right]. 
\end{equation*}
\autoref{lem:r} shows that this condition leads to the following specification of the reward function $b$.

\begin{assumption}[Reward function]\label{ass:b}
The reward function $b\colon \bU\x\bX\x\bH$ is given by
\begin{equation*}
b(u,x,h) = 
\left\{
\begin{aligned}
&\be_R(x,h)+\int_\R \big(r-\chi(r)\big) K_R(x,h,\rd r), 
&\text{if $u=1$,}
\\
&k,
&\text{if $u=0$.}
\end{aligned}
\right.
\end{equation*}
\end{assumption}

By the following assumption, investment in the risky arm makes future investments in the risky arm more profitable. This dependence is mediated by the process $H$, which increases with investment in the risky arm and decreases otherwise. 

\begin{assumption}[Monotonicity condition]\label{ass:mon}
The condition $\be_H(0,h)\leq0\leq\be_H(1,h)$ holds for all $h\in\bH$. Moreover, the reward $b(1,x,h)$ of the risky arm is non-decreasing in $x\in\bX$ and $h\in\bH$.
\end{assumption}

\subsection{Examples.}\label{sec:res:ex}

We show how some well-known classical bandit models described in \autoref{sec:lit:ban} can be extended to restless bandit models, which naturally fit into the framework of this paper and satisfy Assumptions~\ref{ass:A}--\ref{ass:mon}. A common feature of our extension is the presence of an auxiliary state variable $H_t$, whose dynamics are given by the ODE 
\begin{align*}
\rd H_t&=\beta_H(U_t,H_t)\rd t, 
&
H_0&=0,
\end{align*}
for some function $\beta_H:\bU\times\bH\to\bH$ such that the ODE is well-posed under any deterministic control $U:[0,\infty)\to\mathbb U$. For example, this is the case if $H$ increases or decreases linearly depending on the strategy:
\begin{align*}
	H_0&=0, & \beta_H(0,h)&=-1, & \beta_H(1,h) &= 1.
\end{align*} 
The purpose of the auxiliary state variable $H_t$ is to make the risky arm more or less profitable depending on the amount of past investment in the risky arm. We show in Examples~\ref{ex:res:ex:gau}--\ref{ex:res:ex:lev} below how this can be done for Gaussian, Poisson, and L\'evy bandits.

\begin{example}[Gaussian bandits]\label{ex:res:ex:gau}
In the Gaussian bandit model introduced by \citet{Karatzas} the reward of the risky arm is a diffusion whose drift depends on the unobservable type $X$. This model becomes restless if the drift depends  additionally on the level of human capital:
\begin{align*}
	dR_t &= U_t \beta_R(X,H_t) dt  + U_t \sigma_R(H_t) dW_t + (1-U_t)k.
\end{align*}
Then $(X,Y)=(X,H,R)$ is a controlled Markov process, and its generator $\mathcal A$ has the structure described in \autoref{ass:A,ass:str}. \autoref{ass:wel} holds automatically thanks to \autoref{ass:str} and the well-posedness of the ODE for $H_t$ under deterministic controls.\footnote{This follows from \cite[Theorem~III.2.16]{Jacod} noting that $R$ has deterministic semimartingale characteristics under any deterministic control. We refer to \cite{Jacod, Protter} and references therein for more general conditions for the well-posedness of stochastic differential equations and martingale problems.} If $\beta_R$ and $\sigma_R$ are bounded continuous functions, $\beta_R(0,h)\leq \beta_R(1,h)$, and $\sigma_R(h)>0$, then Assumptions~\ref{ass:A}--\ref{ass:mon} are satisfied.\footnote{\autoref{ass:wel2} is satisfied because the coefficients of $\mathcal G$ are bounded and Lipschitz continuous (see \cite[Theorem~III.2.32]{Jacod} or \cite{komatsu1973markov,stroock1975diffusion}). The verification of all other assumptions is straightforward.} 
\end{example}

\begin{example}[Poisson bandits]\label{ex:res:ex:poi}
In Poisson bandits, which were introduced by \citet{Presman}, the reward of the risky arm is a Poisson process $N$ whose jump intensity depends on the unobservable type $X$. As an extension we allow the jump intensity to depend additionally on the current level of human capital $H_t$. Then the jump intensity becomes a function $\lambda(X,H_t)$ of $X$ and $H_t$, and we set 
\begin{align*}
	dR_t &= U_t dN_t + (1-U_t) k, 
	&
	dN^p_t= \lambda(X,H_t)dt,
\end{align*}
where $N^p$ denotes the compensator\footnote{See \cite[Theorem~3.17]{Jacod} for the definition of compensator or dual predictable projection.} of the Poisson process $N$. Equivalently, the compensator of the jump measure of $R$ is $K_R(X,H_t,dr)dt$, where $K_R(x,h,dr)=\lambda(x,h)\delta_1(dr)$. If $\lambda$ is a continuous bounded function satisfying $0\leq\lambda(0,h)\leq \lambda(1,h)$, then Assumptions~\ref{ass:A}--\ref{ass:mon} hold by the same reasoning as above.\footnote{\autoref{ass:wel2} is satisfied because the martingale problem for $\mathcal G$ is piecewise deterministic with finitely many jumps at exponential stopping times.}
\end{example}

\begin{example}[L\'evy bandits]\label{ex:res:ex:lev}
L\'evy bandits, which were introduced by \citet{CohenSolan}, generalize the class of Gaussian and Poisson bandits. They are characterized by the L\'evy triplet of the reward process, which depends on the unobserved type $X$. In our extension to a restless bandit model it may depend additionally on the current level of human capital. The characterization of the reward process in terms of L\'evy triplets is equivalent to the formulation in terms of the martingale problem for $\mathcal A$. A sufficient condition for \autoref{ass:wel2} is that the jump measures $K_R(1,h,dr)$ and $K_R(0,h,dr)$ are equivalent for each $h$.\footnote{\autoref{ass:wel2} follows from \cite[Theorem~3.3]{Ceci2002}, noting that uniqueness holds for the filtered martingale problem for $\mathcal A$ as shown in Step 1 of the proof of \autoref{lem:se:app}.} \autoref{ass:wel} holds by the reasoning above, and all other assumptions are direct conditions on the model coefficients. 
\end{example}

All three examples are genuinely restless bandit models because the reward structure of the risky arm decreases while the risky arm is inactive. Optimal strategies for these models are provided by \autoref{thm:main}. Some important differences to classical bandit models are pointed out in \autoref{sec:res:evo}.

\subsection{Reduction to optimal stopping.}\label{sec:red}

\begin{definition}[Gittins' index]\label{def:git}
Gittins' index $G$ is defined by\footnote{The index does not depend on the initial value of $R$ (see \autoref{lem:no_r}). The two expressions for $G$ in \autoref{def:git} are shown to be equivalent in \citet{ElKaroui1994}.}
\begin{equation*}
    G(p,h)=\inf\left\{s: \sup_T \bE\left(\int_0^T \rho e^{-\rho t} 
    (\rd R_t-s\rd t)\right)
    \leq 0\right\}
    =\sup_T \frac{\bE\left(\int_0^T \rho e^{-\rho t} \rd R_t\right)}
    {\bE\left(\int_0^T \rho e^{-\rho t}\rd t\right)},
\end{equation*}
where $(1,P,H,R)$ is a separated control with constant control process $U\equiv 1$ and initial condition $(P_0,H_0)=(p,h)$, and where the suprema are taken over all $\{\cF^{P,H}_t\}$-stopping times $T$.
\end{definition}

\begin{theorem}[Optimal stopping]\label{thm:main}
The following statements hold under Assumptions \ref{ass:A}--\ref{ass:mon}.
\begin{enumerate}[(a)]
\item The value function $V$ (see \autoref{thm:equiv}) does not depend on the initial value of the process $R$ and can be written as $V=V(p,h)$. 
\item The strategy $U^*_t=\one_{\llbracket 0,T^* \rrbracket}(t)$ is optimal, where
\begin{equation*}
T^*=\inf \{t\geq 0:V(P_t,H_t)\leq k\}=\inf \{t\geq 0: G(P_t,H_t)\leq k\}.
\end{equation*}
\end{enumerate}
\end{theorem}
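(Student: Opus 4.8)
The plan is to follow the interchange argument advertised in \autoref{sec:lit:sep}, working with the separated problem (justified by \autoref{thm:equiv}) and its time-discretization $V^\de$. The key object is a piecewise-constant control on a grid of mesh $\de$, encoded as a binary string $(u_0,u_1,\dots)$ with $u_i\in\{0,1\}$. I would first establish part (a): since $\be_R$, $\si_R$, and $K_R$ depend only on $(x,h)$ and not on $r$, and the increments $\rd R_t$ enter the payoff only through $\ol b(u,p,h)$, a shift of the initial value $R_0$ translates the whole reward process without affecting the joint law of $(P,H)$ or the admissible controls. Hence both $V^\po$ and $V^\se$, and therefore $V$, are independent of $r$; this is essentially \autoref{lem:no_r}.

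For part (b) the heart of the argument is an \emph{interchange lemma}: if an optimal (or near-optimal) discretized strategy ever plays a \emph{safe} step immediately followed by a \emph{risky} step — a substring $\dots 0\,1\dots$ — then swapping those two steps to $\dots 1\,0\dots$ does not decrease the value. Granting this, any finite reshuffling pushes all the $1$'s to the front, so among discretized strategies it is never worse to play a prefix of $1$'s followed by all $0$'s, i.e.\ a stopping rule $\one_{\llbracket 0,\tau\rrbracket}$. The crux is proving the swap inequality. Here is where \autoref{ass:mon} enters decisively. Consider two agents coupled on the same randomness, one playing $01$ and one playing $10$ over a block of two $\de$-steps, identical before and after the block. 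Because $\be_H(1,\cdot)\ge 0\ge\be_H(0,\cdot)$, the agent who invests first (playing $10$) has weakly higher human capital $H$ at the start of the second step, and more importantly enters all \emph{subsequent} periods with weakly higher $H$, since the ODE for $H$ is order-preserving and both agents play identically after the block. Since $b(1,x,h)$ is non-decreasing in $h$, the later risky rewards are weakly larger for the $10$-agent. Two further effects favor $10$: the discount factor $\rho e^{-\rho t}$ weights the information gain from the risky observation more heavily when it arrives earlier, and the belief $P$ updated by an earlier risky draw yields a (weakly) more valuable continuation by the usual convexity/Blackwell-informativeness of the value in the posterior. I would make these precise by conditioning on the first-step outcome and comparing the two continuation values period by period.

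Concretely, I would construct the coupling on the canonical separated space, fixing a common source of noise so that, conditional on the realized risky-arm observation, the two strategies differ only in \emph{when} that observation and its associated $H$-increment occur. The comparison then reduces to two monotonicity facts: (i) the continuation value $V^\de(p,h)$ is non-decreasing in $h$ (provable by the same interchange/coupling or by a direct monotone-coupling of the $H$-dynamics, using \autoref{ass:mon}); and (ii) $V^\de(p,h)$ is such that receiving an informative signal sooner is weakly beneficial, which follows from the martingale property of $P$ together with convexity of the value in $p$. Summing the per-period reward differences against the discount weights and invoking (i)–(ii) yields $J^{\se,\de}(10\text{-strategy})\ge J^{\se,\de}(01\text{-strategy})$.

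Finally I would pass to the limit. Having shown that for every $\de$ the discretized optimum can be taken to be a stopping rule, and knowing by \autoref{thm:equiv}(b) that $V=\sup_\de V^\de$, I would argue that the optimal stopping threshold is characterized by the continuation region $\{V(p,h)>k\}$, giving $T^*=\inf\{t:V(P_t,H_t)\le k\}$; the identification with $\{G(P_t,H_t)\le k\}$ then follows from \autoref{def:git}, since on the continuation region investing in the risky arm is optimal exactly when its Gittins index exceeds the safe payoff $k$, and the two stopping sets coincide by the standard equivalence between the value-based and index-based formulations of the one-armed problem. The main obstacle I anticipate is making the interchange inequality rigorous at the level of sample paths rather than value functions: one must ensure the coupling respects the measurability constraints of the separated problem and that the filtering dynamics of $P$ under the swapped control are genuinely comparable. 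The monotonicity of $H$ is clean, but controlling the joint effect on the belief $P$ — and verifying that an earlier risky observation does not inadvertently lower the continuation value through an adverse $P$-update — is the delicate step, and it is precisely here that \autoref{ass:mon}'s requirement that $b(1,x,h)$ be monotone in \emph{both} arguments does the essential work.
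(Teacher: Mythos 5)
Your part (a) is exactly the paper's \autoref{lem:no_r} and is fine. The genuine gap is in the heart of part (b): the adjacent-transposition swap with a pathwise coupling cannot be carried out as you describe. Under the block $01$ the risky observation is drawn at the \emph{lower} human-capital level reached after the safe step, while under $10$ it is drawn at the initial level; since $\be_R(x,h)$, $\si_R(h)$, and $K_R(x,h,\cdot)$ all depend on $h$, these observations have different laws, so there is no coupling in which ``the two strategies differ only in when that observation occurs.'' Worse, the belief updates ($\phi_1$, $\phi_2$, hence the informativeness $\Phi(1,\cdot)$) also depend on $h$, and \autoref{ass:mon} orders only the \emph{mean reward} $b(1,x,h)$ in $h$ --- it gives no Blackwell ordering of signal informativeness across $h$-levels. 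Your step (ii), ``receiving an informative signal sooner is weakly beneficial by convexity of the value in $p$,'' compares a signal to the \emph{same} signal delayed; here you must compare two \emph{different} signals of unordered informativeness, and convexity plus the martingale property of $P$ does not close that. There is also an adaptedness problem: after swapping, the continuation ``identical after the block'' is not well defined, since the continuation must be measurable with respect to observations whose law the swap has changed. The paper avoids all of this: it never couples swapped paths. Instead it proves monotonicity and convexity of $V^\de$ via the integral representation over \emph{deterministic} control processes (\autoref{lem:mon}), derives from this a Bellman-based myopic sufficient condition for the risky arm (\autoref{lem:ini}), and then runs an induction on the horizon in which the leading safe step is \emph{deleted} and the whole strategy shifted forward by one period --- legitimate because the safe step's reward is deterministic, so the shifted strategy $F^*$ remains adapted --- comparing $J^*$, $J$, $J^0$ through the monotone sequence $b_i$ (whose monotonicity at the switch time uses precisely \autoref{lem:ini}) and Berry's regular-discount-sequence inequality (\autoref{lem:os}).

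A second, smaller gap is your final step. Knowing that $V=\sup_\de V^\de$ is a supremum of values of stopping rules does not by itself yield that the first-entry time $T^*=\inf\{t:V(P_t,H_t)\le k\}$ is optimal, or even that the supremum is attained; the paper devotes \autoref{lem:os:exi} to this, establishing the Feller property of $(P,H)$ under $U\equiv 1$, lower semicontinuity of the value of the associated stopping problem, closedness of the stopping set, and then invoking the general optimal-stopping theory of Peskir--Shiryaev. Your phrase ``the optimal stopping threshold is characterized by the continuation region'' asserts the conclusion without this machinery. The identification with the Gittins index set $\{G(P_t,H_t)\le k\}$ via the standard one-armed equivalence is fine and matches the paper's citation of Morimoto and El Karoui--Karatzas.
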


\begin{remark}
\begin{itemize}
\item The main value of \autoref{thm:main} is that it reduces the restless bandit problem to an optimal stopping problem. This exhibits the structure of optimal strategies in terms of a decision frontier (see \autoref{prop:frontier}). Moreover, the stopping problem can be solved more easily by a variety of specialized methods (see e.g. \citet[Chapter~IV]{Peskir}). 
\item The intuition behind \autoref{thm:main} is that choosing the risky arm early rather than late has two advantages: first, it reveals useful information about the hidden state $X$ early on, and second, it makes future rewards from the risky arm more profitable without depreciating rewards from the safe arm.

\item The elimination of the state variable $r$ is possible because of \autoref{ass:str}, which asserts that the drift, volatility, and jump measure of the reward process only depend on $P$ and $H$. 

\item At the heart of \autoref{thm:main} lies the assertion that any optimal control of the discretized problem can be transformed into a stopping rule of at least the same value (\autoref{lem:os}). The argument is based on \citet[Theorem 5.2.2]{Berry}, but in our setting rewards may depend on the history of experimentation with the risky arm. This dependence is subject to the monotonicity properties in \autoref{ass:mon}. Our proof shows that these properties are exactly what is needed to adapt the argument of \citet{Berry} to a restless bandit setting. 

\item The strategy $U^*$ is well-defined and optimal for the separated problem as well as the problem with partial observations. 
\end{itemize}

\end{remark}

\autoref{thm:main} follows from a sequence of lemmas, which can be found in \autoref{sec:app:res} in the Appendix. The following proof explains the r\^ole that each individual lemma plays. 

\begin{proof}[Proof of \autoref{thm:main}]
The value function does not depend on the initial value of $R$ by \autoref{lem:no_r}. Therefore, it can be written as $V(p,h)$. The discrete-time value function $V^\delta(p,h)$ is non-decreasing in $(p,h)$ and convex in $p$. This is established in \autoref{lem:mon} using the monotonicity properties in \autoref{ass:mon}. The result is used in \autoref{lem:ini} to prove a sufficient condition for the optimality of the risky arm in the discretized problem: if the myopic payoff is higher for the risky than for the safe arm, then choosing the risky arm is uniquely optimal. This sufficient condition is used in \autoref{lem:os} to prove that $V^\de(p,h)$ is a supremum of values of stopping rules. The approximation result of \autoref{thm:equiv} implies that $V(p,y)$ is also a supremum of values of stopping rules. The  stopping time $T^*=\inf\{t\geq 0\colon V(P_t,H_t)\leq k\}$ is optimal by \autoref{lem:os:exi}. The alternative characterization of $T^*$ in terms of Gittins' index is well-known, see e.g. \citet[Theorem 2.1]{Morimoto} or \citet[Proposition 3.4]{ElKaroui1994}. 
\end{proof}

An immediate consequence of \autoref{thm:main} is a characterization of optimal strategies by a curve which is typically called the \emph{decision frontier}.

\begin{proposition}[Decision frontier]\label{prop:frontier}
There is a curve in the $(p,h)$-domain
such that it is optimal to invest in the risky arm if $(P_t,H_t)$ lies to the right and above of the curve. Otherwise, it is optimal to invest in the safe arm. 
\end{proposition}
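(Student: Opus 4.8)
The plan is to read the decision frontier off directly from the optimal stopping description in \autoref{thm:main}. By that theorem the optimal control invests in the risky arm precisely while $t<T^*$, i.e.\ precisely while $V(P_t,H_t)>k$. Hence the set of states where the risky arm is optimal is the continuation region $C=\{(p,h):V(p,h)>k\}$, and the set where the safe arm is optimal is the stopping region $S=\{(p,h):V(p,h)\le k\}$ (the two prescriptions agree on the boundary $\{V=k\}$). The proposition thus reduces to the purely geometric claim that $C$ and $S$ are separated by a non-increasing curve, with $C$ lying to its upper right.

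The single analytic ingredient I would use is monotonicity of $V$. \autoref{lem:mon} shows that each discretized value function $V^\delta(p,h)$ is non-decreasing in $(p,h)$ (this is exactly where \autoref{ass:mon} enters), and \autoref{thm:equiv} gives $V=\sup_{\delta>0}V^\delta$. A supremum of functions that are non-decreasing in the coordinatewise order is again non-decreasing, so $V(p,h)$ is non-decreasing in $p$ and in $h$ separately. Consequently $C$ is an \emph{upper set}: if $(p,h)\in C$ and $(p',h')\ge(p,h)$ componentwise, then $V(p',h')\ge V(p,h)>k$, so $(p',h')\in C$.

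To produce the curve I would, for each $h\in\bH$, set
\begin{equation*}
p^*(h)=\inf\{p\in[0,1]:V(p,h)>k\},
\end{equation*}
with the convention $p^*(h)=1$ when the set is empty (the slice is then entirely safe). Monotonicity of $V(\cdot,h)$ makes the risky part of each horizontal slice an up-set in $p$, namely $\{p:V(p,h)>k\}$ equals $(p^*(h),1]$ or $[p^*(h),1]$; monotonicity of $V(p,\cdot)$ then makes $p^*$ non-increasing in $h$. The graph $\{(p^*(h),h):h\in\bH\}$ is therefore a non-increasing curve whose upper-right side is $C$ and whose lower-left side is $S$, and declaring this graph the decision frontier yields the statement.

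The hard part is purely the formalisation of the word ``curve'': $p^*$ may be discontinuous in $h$ or attain the boundary values $0$ and $1$, so the frontier can contain vertical and horizontal segments and need not be the graph of a continuous function. I would handle this by taking the frontier to be the topological boundary $\partial C$ in $[0,1]\times\bH$, which for an upper set in the product order is automatically traced out by a non-increasing correspondence; if a single-valued, continuous frontier is desired, the convexity of $V$ in $p$ (also from \autoref{lem:mon}) gives continuity of $V(\cdot,h)$ on $(0,1)$ and hence a well-defined threshold $p^*(h)$ on each slice. Everything beyond this bookkeeping is immediate from the monotonicity established above.
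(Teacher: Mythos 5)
Your proposal is correct and follows essentially the same route as the paper: both arguments identify the risky region with the continuation set $\{(p,h):V(p,h)>k\}$ via \autoref{thm:main} and define the decision frontier as the boundary of that set, with the monotonicity of $V$ from \autoref{lem:mon} guaranteeing it is an upper set (note that \autoref{lem:mon} already asserts monotonicity of $V$ itself, so your intermediate step through $V=\sup_\delta V^\delta$ merely reproduces its proof). Your additional construction of the threshold function $p^*(h)$ and the discussion of its possible discontinuities is careful bookkeeping beyond what the paper records, but not a different method.
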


\begin{proof}
The value function $V(p,h)$ is non-decreasing in its arguments by \autoref{lem:mon} and bounded from below by the constant $k$. The desired curve is the boundary of the domain $\{(p,h):V(p,h)>k\}$. The characterization of optimal strategies via the position of $(P_t,H_t)$ relative to the curve follows from \autoref{thm:main}. 
\end{proof}

\subsection{Indexability.}\label{sec:res:ind}

Another consequence of \autoref{thm:main} is the indexability of our restless bandit model in the sense of \citet{Whittle1988}. 

\begin{definition}[Indexability]
Consider a two-armed bandit problem with a safe and a risky arm. The bandit problem is called indexable if the set of states where the safe arm is optimal is increasing in the payoff $k$ of the safe arm. 
\end{definition}
	
\begin{proposition}[Indexability]\label{prop:indexability}
The restless bandit model of \autoref{sec:res:set} is indexable.
\end{proposition}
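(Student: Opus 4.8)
The plan is to read off indexability directly from the index characterization of optimal strategies in \autoref{thm:main}, exploiting the fact that Gittins' index carries no dependence on the safe-arm payoff. The crucial observation is that $G(p,h)$ in \autoref{def:git} is built solely from the risky arm: it is defined through a separated control with the constant policy $U\equiv1$, and by \autoref{ass:str} the drift $\be_R$, volatility $\si_R$, and jump kernel $K_R$ that govern the reward process $R$ under $U=1$ do not involve $k$ at all. The constant $k$ enters the model only as the $R$-drift under the passive choice $U=0$ (i.e.\ through the second component of $\be(0,\cdot)$), which never appears in the definition of $G$. Hence $G(p,h)$ is independent of $k$.

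First I would identify the set of states where the safe arm is optimal. By \autoref{thm:main}(b) the optimal strategy stops the first time that $G(P_t,H_t)\leq k$, so at the initial state the safe arm is chosen immediately precisely when $G(p,h)\leq k$. Writing $\cS(k)=\{(p,h):G(p,h)\leq k\}$ for this set---equivalently, the complement of the continuation region of \autoref{prop:frontier}---the assertion reduces to showing that $\cS(k)$ is nondecreasing in $k$.

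This last step is then immediate from the $k$-independence of $G$: for $k_1\leq k_2$ and any $(p,h)\in\cS(k_1)$ one has $G(p,h)\leq k_1\leq k_2$, so $(p,h)\in\cS(k_2)$, whence $\cS(k_1)\subseteq\cS(k_2)$, which is exactly the indexability condition.

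The only point that demands care---and where I expect the genuine content to sit---is justifying the first step rigorously, namely that ``the safe arm is optimal at $(p,h)$'' coincides with the sublevel set $\{G\leq k\}$. One must confirm that the index comparison $G(P_t,H_t)\leq k$ truly delineates the passive region as a $k$-free object, rather than merely describing one optimal strategy among possibly several. Working from the value-function form $T^*=\inf\{t:V(P_t,H_t)\leq k\}$ would be less convenient here, since $V$ itself varies with $k$, whereas the whole argument hinges on isolating the $k$-independent quantity $G$; this is precisely why \autoref{thm:main} is stated with both characterizations. Once the index characterization is granted, any boundary ambiguity on $\{G=k\}$ (where both arms may be optimal) does not affect monotonicity of the sublevel sets, so no further estimates are required.
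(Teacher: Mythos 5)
Your proposal is correct and follows exactly the paper's own argument: the proof in the paper likewise observes that Gittins' index $G(p,h)$ depends only on the risky arm's payoff structure (hence is independent of $k$), so the passive region $\{(p,h)\colon G(p,h)\leq k\}$ from \autoref{thm:main} is a sublevel set of a $k$-free function and is therefore nondecreasing in $k$. Your additional care about the identification of the passive region via the index characterization, and about ambiguity on the boundary $\{G=k\}$, is a fair elaboration of details the paper leaves implicit, but it does not change the route.
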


\begin{proof}
Gittins' index $G(p,h)$ depends only on the payoff of the risky arm. Therefore, the set $\{(p,h)\colon G(p,h)\leq k\}$ where the safe arm is optimal has the required properties. 
\end{proof}

\subsection{Asymptotic learning.}\label{sec:res:asy}

\begin{definition}[Asymptotic learning and infinite investment]
For any $\om \in \Om$, we say that asymptotic learning holds if $\lim_{t\to\infty} P_t(\omega) = X(\omega)$. We say that the agent invests an infinite amount of time in the risky arm if $\int_0^\infty U_t(\omega) \rd t = \infty$. 
\end{definition}

\begin{assumption}[Bounds on the flow of information]\label{ass:asy}
The initial belief is non-doctrinaire, i.e., $P_0\in(0,1)$. The measures $K_R(1,h,\cdot)$ and $K_R(0,h,\cdot)$ are equivalent, for all $h \in \bH$. The function $\Phi(1,\cdot)$ defined in \autoref{ass:bou} is bounded from below by a positive constant. 
\end{assumption}

\begin{proposition}[Asymptotic learning]\label{prop:convergence} 
Under Assumptions \ref{ass:A}--\ref{ass:asy}, the following statements hold:
\begin{enumerate}[(a)]
\item Under any control, asymptotic learning occurs if and only if the agent invests an infinite amount of time in the risky arm.
\item Under the optimal control of \autoref{thm:main}, asymptotic learning takes place if and only if $(P,H)$ remains above the decision frontier for all time. 
\end{enumerate}
\end{proposition}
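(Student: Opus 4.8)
The plan is to reduce asymptotic learning to a mutual-singularity property of the conditional laws of the observations given the hidden type, to identify the accumulated information with the functional $\int_0^t \Phi(U_s,Y_s)\,\rd s$, and then to tie this functional to the investment time via the bounds of \autoref{ass:asy}. Since $P$ is a bounded martingale it converges a.s.\ to $P_\infty$, and by L\'evy's upward convergence theorem $P_\infty=\bE[X\mid\cF^Y_\infty]$; as $X\in\{0,1\}$, asymptotic learning ($P_\infty=X$ a.s.) is equivalent to $X$ being $\cF^Y_\infty$-measurable, i.e.\ to the mutual singularity on $\cF^Y_\infty$ of the conditional laws $\bP^1,\bP^0$ of $Y$ given $X=1$ and $X=0$. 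First I would record the Bayes representation $P_t/(1-P_t)=\frac{P_0}{1-P_0}Z_t$, where $Z_t=\rd\bP^1/\rd\bP^0\big|_{\cF^Y_t}$ is the Dol\'eans--Dade exponential furnished by Assumptions \ref{ass:gir} and \ref{ass:novikov} (the same change of measure underlying \autoref{lem:fi}); the non-doctrinaire prior $P_0\in(0,1)$ and the equivalence of the jump kernels in \autoref{ass:asy} make $L_t:=P_t/(1-P_t)$ well-defined and $Z_t$ strictly positive at each finite time. Learning then means $Z_\infty=0$ ($\bP^0$-a.s.\ on $\{X=0\}$) and $1/Z_\infty=0$ ($\bP^1$-a.s.\ on $\{X=1\}$).

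Second, I would identify the exponential rate of $Z$. A direct computation from the characteristics in \autoref{ass:A} shows that $\Phi(u,y)$ is (a constant multiple of) the rate of the Hellinger process of order $\tfrac12$ between $\bP^1$ and $\bP^0$: the term $\tfrac18\phi_1^\top\sigma^2\phi_1$ is the Gaussian contribution, while substituting $\phi_2=2K(u,1,y,\cdot)/(K(u,1,y,\cdot)+K(u,0,y,\cdot))$ collapses the jump integrand to the squared-Hellinger form $\big(\sqrt{\rd K(u,1,y,\cdot)}-\sqrt{\rd K(u,0,y,\cdot)}\big)^2$. Hence the Hellinger process is $h_t=\int_0^t\Phi(U_s,Y_s)\,\rd s$ up to a fixed positive factor. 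By the structural assumption \autoref{ass:str} one has $\sigma^2(0,\cdot)=0$ and $K(0,\cdot)=0$, so $\Phi(0,\cdot)\equiv 0$ and $P$ is frozen on every interval where the safe arm is played, producing no information.

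Third, and this is the main obstacle, I would invoke the dichotomy relating the Hellinger process to singularity and contiguity of $\bP^1,\bP^0$ on $\cF^Y_\infty$ (Jacod--Shiryaev, Chapter IV of \citet{Jacod}). The delicate point is its \emph{event-wise} form, needed because the investment time is random: one must show $Z_\infty\in(0,\infty)$ a.s.\ on $\{h_\infty<\infty\}$ and $Z_\infty=0$ ($\bP^0$-a.s.), $1/Z_\infty=0$ ($\bP^1$-a.s.) on $\{h_\infty=\infty\}$, so that learning holds precisely on $\{h_\infty=\infty\}$ up to a null set. This localized statement I would obtain from the special-semimartingale decomposition of $\log Z$, whose predictable drift under $\bP^0$ (respectively $\bP^1$) is, up to the stated rate, $-h$, together with the a.s.\ convergence of its continuous and purely-discontinuous martingale parts on $\{h_\infty<\infty\}$.

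Finally I would close both parts. Assumptions \ref{ass:asy} and \ref{ass:novikov} give $0<c\le\Phi(1,\cdot)\le C<\infty$, which with $\Phi(0,\cdot)=0$ yields $c\,U_s\le\Phi(U_s,Y_s)\le C\,U_s$; integrating, $\int_0^\infty\Phi(U_s,Y_s)\,\rd s=\infty$ if and only if $\int_0^\infty U_s\,\rd s=\infty$. Combined with the third step this proves part (a). For part (b), under the optimal control $U^*_t=\one_{\llbracket 0,T^*\rrbracket}(t)$ of \autoref{thm:main} the investment time equals $\int_0^\infty U^*_s\,\rd s=T^*$, and $T^*=\inf\{t\colon V(P_t,H_t)\le k\}=\infty$ exactly when $(P_t,H_t)$ never leaves $\{V>k\}$, i.e.\ stays above the decision frontier of \autoref{prop:frontier} for all time; applying part (a) then gives part (b).
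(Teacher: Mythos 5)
Your overall architecture coincides with the paper's \autoref{lem:asy}, which carries the substance of \autoref{prop:convergence}: identify $\Phi$ as the rate of the Hellinger process $h(\tfrac12)$ of the conditional laws $\bP_1,\bP_0$, use the equivalence of the jump kernels in \autoref{ass:asy} to exclude learning through a jump of $P$ to $\{0,1\}$, use $\Phi(0,\cdot)=0$ together with the two-sided bounds from \autoref{ass:novikov,ass:asy} to identify $\{h(\tfrac12)_\infty=\infty\}$ with $\{\int_0^\infty U_t\,\rd t=\infty\}$, and then obtain part (b) from $T^*$ and \autoref{prop:frontier}. The difference, and the genuine gap, is in your third step. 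The paper obtains the event-wise dichotomy from Schachermayer's Theorem~1.5 (cited in \autoref{lem:asy}); you propose instead to read it off a special-semimartingale decomposition of $\log Z$ whose ``predictable drift under $\bP_0$ is, up to the stated rate, $-h$.'' That comparison fails in the generality admitted here: \autoref{ass:asy} requires only \emph{equivalence}, not bounded likelihood ratios, of $K_R(1,h,\cdot)$ and $K_R(0,h,\cdot)$. Writing $u=\rd K_R(1,h,\cdot)/\rd K_R(0,h,\cdot)$, the drift integrand of $-\log Z$ under $\bP_0$ is $u-1-\log u$, whereas the Hellinger integrand is $\tfrac{u+1}{2}-\sqrt{u}$; as $u\to 0$ the former blows up like $\lvert\log u\rvert$ while the latter stays bounded by $\tfrac12$. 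Hence the drift of $\log Z$ is not comparable to $h$, and $\log Z$ need not admit a decomposition with separately convergent drift and martingale parts on $\{h_\infty<\infty\}$: one can have $h_\infty<\infty$ while $\int(u-1-\log u)\,K_0$ diverges, with $Z_\infty>0$ nonetheless because the rare small-$u$ jumps occur only finitely often pathwise. \autoref{ass:novikov} bounds the Hellinger rate, not $\log$-moments of the jump ratios, so it cannot rescue the estimate.

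Two further consequences of the same issue. First, your sketch supplies a convergence tool only on $\{h_\infty<\infty\}$; the divergence direction, $Z_\infty=0$ $\bP_0$-a.s.\ on $\{h_\infty=\infty\}$, is asserted without argument, and the natural strong-law comparison of the martingale part of $\log Z$ with its drift fails for exactly the reason above (the predictable quadratic variation of the jump-martingale part, with integrand $(\log u)^2$, is not dominated by a multiple of $h$ near $u=0$). The standard, and the paper's, resolution is multiplicative: work with $\sqrt{Z}$ and the associated Hellinger supermartingale, which is the content of the cited event-wise dichotomy; if you want a self-contained proof, reproduce that argument rather than the additive $\log Z$ one. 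Second, a smaller but real point: your identity $h_t=\int_0^t\Phi(U_s,Y_s)\,\rd s$ is only valid up to the first time $S$ at which $P$ or $P_-$ reaches $\{0,1\}$ (after $S$ the filter is frozen and the jump compensator has no charge, cf.\ \autoref{equ:h} where the paper writes $\Phi(U,Y)\bullet I^S$). Accordingly, in the infinite-investment direction the event $\{S\le\tau\}$ must be treated separately, as in Step~3 of the paper's proof of \autoref{lem:asy}: on that event learning has already occurred even though $h$ has stopped growing, so the implication ``infinite investment $\Rightarrow h_\infty=\infty$'' is not literally correct without splitting off $\{S\le\tau\}$. The remainder of your argument --- the Bayes representation $P_t/(1-P_t)$, strict positivity of $Z$ at finite times, the bounds $c\,U_s\le\Phi(U_s,Y_s)\le C\,U_s$, and the deduction of (b) from $T^*=\infty$ iff $(P,H)$ stays above the frontier --- matches the paper and is sound.
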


\begin{proof}
(a) follows from \autoref{lem:asy}. (b) follows from (a) and the characterization of optimal controls in \autoref{prop:frontier}. 
\end{proof}

\begin{remark}\label{rem:asy}
\begin{itemize}
\item The limit $\lim_{t\to\infty} P_t$ exists almost surely because $P$ is a bounded martingale. If the belief $P_0\in\{0,1\}$ is doctrinaire, then the belief process $P$ is constant and equal to the hidden state $X$.

\item Agents can learn their true type $X$ in two ways: either through a jump of the belief process to $X$, or through convergence to $X$ without a jump to the limit. The first kind of learning is excluded by the equivalence of $K_R(1,h,\cdot)$ and $K_R(0,h,\cdot)$. The second kind of learning is characterized by divergence of the Hellinger process of the measures $\bP_1$ and $\bP_0$. The Hellinger process is closely related to the function $\Phi(u,y)$, which can be interpreted as the informativeness of the arm $u$ about the state $X$. The upper and lower bounds on $\Phi$ in \autoref{ass:novikov,ass:asy} establish an equivalence between divergence of the Hellinger process and divergence of the accumulated amount of investment in the risky arm (see \autoref{lem:asy}).

\item If the measures $K_R(1,H,\cdot)$ and $K_R(0,H,\cdot)$ are not equivalent, the belief process $P$ jumps to the true state $X$ with positive probability on any finite interval of time where the risky arm is chosen. For example, this is the case in the exponential bandits model of \citet{KellerRadyCripps}.

\item \autoref{prop:convergence} can be contrasted with the strategic experimentation model of \citet{BoltonHarris2000} and the social learning model of \citet[Example 1.1]{Acemoglu}. In these models, asymptotic learning always takes place because agents continuously receive information about the hidden state, regardless of whether they choose to invest or not. 
\end{itemize}	
\end{remark}

\subsection{Comparison to the full-information case.}\label{sec:res:tra}

By the full-information case, we mean the bandit model where the otherwise hidden state variable $X$ is fully observable. This model is equivalent to the model with partial observations and $P_0\in\{0,1\}$. It follows from \autoref{thm:main} and the monotonicity condition in \autoref{ass:mon} that the optimal strategy in the full-information case is constant in time and given by $\one_{V(X,H_0)>k}$. 

\begin{definition}[Asymptotic efficiency]\label{def:eff}
For any $t \geq 0$ and $\om \in \Om$, $U_t(\omega)$ is called efficient if it coincides with $\one_{V(X(\omega),H_0)>k}$. Moreover, $U(\omega)$ is called asymptotically efficient if $U_t(\omega)$ is efficient for all sufficiently large times $t$.
\end{definition}

\begin{assumption}[Decision frontier stays away from $p=0$ and $p=1$]\label{ass:eff}
There is $\epsilon>0$ such that for all $h\in\bH$, $V(\epsilon,h)=k$ and $V(1,h)>k$.	
\end{assumption}

\begin{proposition}[Asymptotic efficiency]\label{prop:eff}
Let Assumptions \ref{ass:A}--\ref{ass:eff} hold, let $U$ be the optimal strategy provided by \autoref{thm:main}, and assume that $(P_0,H_0)$ lies above the decision frontier. Conditional on $X=0$, asymptotic efficiency holds almost surely. Conditional on $X=1$, however, asymptotic efficiency may hold and fail with positive probability. 
\end{proposition}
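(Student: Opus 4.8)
The plan is to translate ``asymptotic efficiency'' into a statement about the optimal stopping time $T^*$ of \autoref{thm:main} and then analyze the belief process $P$ under the conditional laws $\bP_x=\bP(\,\cdot\mid X=x)$, $x\in\{0,1\}$. First I would record two consequences of \autoref{ass:eff}. Since $V$ is non-decreasing in $p$ (\autoref{lem:mon}) and $V\geq k$ with $V(\epsilon,h)=k$, one gets $V(0,h)=k$, so the efficient action is the safe arm when $X=0$ and, because $V(1,h)>k$, the risky arm when $X=1$. Moreover, if the agent invests at time $t$ then $V(P_t,H_t)>k=V(\epsilon,H_t)$, which by monotonicity forces $P_t>\epsilon$. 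Writing $p^*(h)=\sup\{p:V(p,h)\leq k\}$ for the frontier, one has $\epsilon\leq p^*(h)<1$ (the strict upper bound from $V(1,h)>k$ and continuity), and $p^*$ is non-increasing in $h$ (again by \autoref{lem:mon}). Because the optimal strategy plays the risky arm exactly on $\llbracket 0,T^*\rrbracket$, asymptotic efficiency is equivalent to $\{T^*<\infty\}$ when $X=0$ and to $\{T^*=\infty\}$ when $X=1$.

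The case $X=0$ is then immediate: on $\{T^*=\infty\}$ the agent invests for all time, so \autoref{prop:convergence} gives asymptotic learning $P_t\to X=0$; but investing at every $t$ requires $P_t>\epsilon$, a contradiction. Hence $\bP_0(T^*<\infty)=1$ and asymptotic efficiency holds $\bP_0$-a.s. For the failure half of the case $X=1$, I would use this together with local equivalence: from $\bP_0(T^*<\infty)=1$ there is a finite $t_0$ with $\bP_0(T^*\leq t_0)>0$, and $\{T^*\leq t_0\}\in\cF^Y_{t_0}$. The equivalence of $K_R(1,h,\cdot)$ and $K_R(0,h,\cdot)$ in \autoref{ass:asy} is exactly the change of measure underlying the filter, so $\bP_1\sim\bP_0$ on $\cF^Y_{t_0}$ and therefore $\bP_1(T^*<\infty)\geq\bP_1(T^*\leq t_0)>0$: asymptotic efficiency fails with positive probability.

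The hard part is to show $\bP_1(T^*=\infty)>0$, i.e.\ that the high type invests forever with positive probability. My plan is a two-step hitting argument resting on the observation that $P$ is a bounded submartingale under $\bP_1$: since $d\bP_1/d\bP|_{\cF^Y_t}=P_t/P_0$, conditional Jensen gives $\bE_{\bP_1}[P_t\mid\cF^Y_s]=P_s^{-1}\bE_{\bP}[P_t^2\mid\cF^Y_s]\geq P_s$, so $1-P$ is a non-negative $\bP_1$-supermartingale. For $a>0$ put $\tau_a=\inf\{t:P_t\geq 1-a\}$; arguing as in the $X=0$ case, $\tau_a\wedge T^*<\infty$ $\bP_1$-a.s. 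Optional stopping of the submartingale $P$ at $\tau_a\wedge T^*$, together with $P_{T^*}\leq p^*(H_{T^*})\leq p^*(H_0)=:q_0<1$ (because $H$ increases and $p^*$ decreases while investing) and $P_0>q_0$, yields $\bP_1(\tau_a<T^*)\geq (P_0-q_0)/(1-q_0)>0$. On this event the belief reaches $1-a$ while still investing; applying the maximal inequality to the supermartingale $1-P$ restarted at $\tau_a$ gives $\bP_1\big(\sup_{t\geq\tau_a}(1-P_t)\geq 1-q_0\mid\cF_{\tau_a}\big)\leq a/(1-q_0)$, and since after $\tau_a$ the frontier is even lower, hitting it forces $1-P_t\geq 1-q_0$. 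Choosing $a<(1-q_0)/2$ makes this conditional probability $<1/2$, so the agent never returns to the frontier with conditional probability $\geq 1/2$, and combining the two steps gives $\bP_1(T^*=\infty)\geq \tfrac12 (P_0-q_0)/(1-q_0)>0$.

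The main obstacle is precisely this last assertion. The other three conclusions reduce quickly to \autoref{thm:main}, \autoref{prop:convergence}, and the local equivalence of the conditional laws, but ``holds with positive probability'' is the only genuinely quantitative claim: it requires the submartingale structure of the belief under $\bP_1$ (to push the belief up toward $1$) and the fact that the frontier is bounded away from $p=1$ at the initial point (\autoref{ass:eff}, giving $q_0<1$) to bound the probability of ever falling back, so that a high type can lock in perpetual investment with positive probability.
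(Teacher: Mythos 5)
Your proposal is correct, and on the hard half it is genuinely stronger than what the paper writes down. For the case $X=0$ you argue exactly as the paper does: perpetual investment would force $P_t\to 0$ by \autoref{prop:convergence}, contradicting the fact that investing requires $P_t>\epsilon$ because $V(\epsilon,h)=k$ uniformly in $h$ (\autoref{ass:eff}); hence $T^*<\infty$ $\bP_0$-a.s., which is efficient since $V(0,H_0)=k$. For the case $X=1$, however, the paper's proof consists of the bare assertion that ``both cases may happen with positive probability,'' with no argument given. You actually supply proofs of both halves: failure with positive probability via the local equivalence $\bP_1\sim\bP_0$ on $\cF^Y_{t_0}$ (legitimate, since under \autoref{ass:asy} \autoref{lem:asy}(a) gives $0<P_t<1$ a.s., so the density processes $P/P_0$ and $(1-P)/(1-P_0)$ are strictly positive, and $\{T^*\leq t_0\}\in\cF^Y_{t_0}$ by \autoref{lem:os:exi}); and $\bP_1(T^*=\infty)>0$ via the observation that $P$ is a bounded $\bP_1$-submartingale (your conditional-Jensen computation with density $P_t/P_0$ is right), optional stopping at $\tau_a\wedge T^*$, which is a.s.\ finite by asymptotic learning on $\{T^*=\infty\}$, and the maximal inequality for the nonnegative $\bP_1$-supermartingale $1-P$ restarted at $\tau_a$ (valid since $\{\tau_a<T^*\}\in\cF_{\tau_a}$). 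The monotonicity bookkeeping also checks out: $H$ increases on $\llbracket 0,T^*\rrbracket$, $p^*$ is non-increasing in $h$ by \autoref{lem:mon}, the stopping set is closed so $P_{T^*}\leq p^*(H_{T^*})\leq q_0$, and $P_{\tau_a}\geq 1-a$ by right-continuity. What your route buys is an explicit lower bound $\tfrac12(P_0-q_0)/(1-q_0)$ on the probability of perpetual (efficient) investment by the high type, where the paper offers none; \autoref{ass:asy} is available to you since it sits inside the hypotheses \ref{ass:A}--\ref{ass:eff}.

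One small slip, which does not damage the argument: your parenthetical claim that $p^*(h)<1$ for all $h$ ``from $V(1,h)>k$ and continuity'' is not justified as stated. \autoref{lem:mon} gives convexity of $V(\cdot,h)$, hence continuity only on the interior of $[0,1]$; a bounded convex function can jump upward at the right endpoint, so $V(1,h)>k$ alone does not rule out $\lim_{p\to 1^-}V(p,h)=k$. Fortunately your proof never uses $p^*(h)<1$ globally: all it needs is $q_0=p^*(H_0)<P_0$ and hence $q_0<1$. These follow because $P_0\in(0,1)$ by \autoref{ass:asy}, so $V(\cdot,H_0)$ is continuous at the interior point $P_0$, and ``$(P_0,H_0)$ above the frontier'' means $V(P_0,H_0)>k$, which then holds on a full neighborhood of $P_0$, forcing $p^*(H_0)$ strictly below $P_0$. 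With that justification inserted, the quantitative bound stands.
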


\begin{proof}
If $X=0$, investment in the risky arm can't continue forever. Otherwise, $P_t$ would converge to zero by \autoref{prop:convergence}. As the decision frontier is strictly bounded away from the set $p=0$, $(P_t,H_t)$ would eventually drop below the decision frontier, a contradiction. Thus, investment stops at some finite point in time. This is efficient given $X=0$ because $V(0,H_0)=k$.

If $X=1$, then $(P,H)$ may or may not drop below the frontier at some point in time. Both cases may happen with positive probability. In the former case, the agent stops investing, which is inefficient because $V(1,H_0)>0$. In the latter case, the agent never stops investing, which is efficient. 
\end{proof}

\begin{remark}\label{rem:eff}
\begin{itemize}
\item Efficiency holds if there is some time $t$ where the agent's plan for future investments is the same as if she had known $X$ from the beginning. Of course, this still leaves open the possibility that some early investment decisions were inefficient.

\item The intuition behind \autoref{prop:eff} is that a sequence of bad payoffs can lead agents to refrain from experimentation with the risky arm. For agents of the type $X=0$, this is efficient, but for agents with $X=1$, it is not. In this regard, the restless bandit model behaves as a standard bandit model.

\item It follows that in the long run, compared to a setting with full information, agents invest too little in the risky arm. This points to the importance of policies designed to increase investment in the risky arm. 

\item \autoref{ass:eff} limits the influence of $H$ on the rewards from the risky arm: the safe arm is optimal if $X=0$ is known for sure, regardless of how high $H$ is, and similarly the risky arm is optimal if $X=1$ is known for sure, regardless of how low $H$ is.

\item Without \autoref{ass:eff}, it is still possible to characterize asymptotic efficiency using the necessary and sufficient conditions of \autoref{lem:asy}, but there are more cases to distinguish. Some of them have no counterpart in classical bandit models. For example, there can be low-type agents who invest in the risky arm at all times. This can be either efficient or inefficient, depending on whether $V(0,H_0)$ exceeds $k$. Similarly, it can be efficient or inefficient for high-type agents to stop investing, depending on $V(1,H_0)$.
\end{itemize}	
\end{remark}

\subsection{Evolution of a population of agents.}\label{sec:res:evo}

Assume that there is a population of agents with initial states $(P_0,H_0)$, which might vary from agent to agent. Moreover, assume that agents have independent types, such that learning from others is impossible. Alternatively, learning could be precluded by making actions and rewards private information. Then all agents behave as in the single player case. The distribution of agents in the $(p,h)$-domain evolves over time and converges to the distribution of $(P_\infty,H_\infty)$. 

\begin{proposition}\label{prop:pop}
Let Assumptions \ref{ass:A}--\ref{ass:eff} hold, let $p^*(h)$ denote the decision frontier, and consider a population of agents with $(P_0,H_0)$ above the decision frontier. 
\begin{enumerate}[(a)]
\item In a restless bandit model with $\beta(0,h)<0<\beta(1,h)$, $(P_\infty,H_\infty)$ satisfies
\begin{align*}
&P_\infty \in [0, p^*(-\infty)] \text{ and } H_\infty=-\infty 
&&\text{or}&& P_\infty=1 \text{ and } H_\infty=\infty.
\end{align*}

\item In a classical bandit model with $\beta(0,h)=\beta(1,h)=0$ and $\Delta P \geq -\epsilon$ for some $\epsilon\geq 0$, 
\begin{equation*}
P_\infty \in 
[p^*(H_0)-\epsilon,p^*(H_0)] \cup \{1\}
\text{ and } H_\infty = H_0.
\end{equation*}	
In particular, agents in models without jumps either end up right at the decision frontier $(p^*(H_0),H_0)$ in finite time or converge to $(1,H_0)$.
\end{enumerate}
\end{proposition}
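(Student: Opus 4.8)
The plan is to exploit that the optimal strategy of \autoref{thm:main} is the stopping rule $U^*_t=\one_{\llbracket 0,T^*\rrbracket}(t)$, which partitions every sample path into one of two mutually exclusive regimes: either the agent stops in finite time ($T^*<\infty$) or invests forever ($T^*=\infty$). Before splitting into these cases I would record two structural facts common to parts (a) and (b). \emph{First}, once the agent stops she receives no further information about $X$: under $U=0$ the coefficients $\sigma^2(0,\cdot)$ and $\ol K(0,\cdot)$ vanish by \autoref{ass:str}, so the generator $\cG$ in \autoref{ass:G} carries no $\partial_p$-terms at $u=0$, whence $P_t=P_{T^*}$ for every $t\ge T^*$ and thus $P_\infty=P_{T^*}$. \emph{Second}, the frontier $p^*(h)$ is non-increasing in $h$: since $V$ is non-decreasing in both arguments (\autoref{lem:mon}) and $p^*(h)$ is the level with $V(p^*(h),h)=k$, raising $h$ forces the threshold $p$ down, so $p^*(H)\le p^*(-\infty)$ for all $H$.

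On the event $\{T^*=\infty\}$ the agent invests for an infinite amount of time, so \autoref{prop:convergence}(a) gives asymptotic learning, $P_t\to X\in\{0,1\}$. The case $X=0$ is impossible: by \autoref{ass:eff} the frontier stays bounded away from $p=0$, so $P_t\to 0$ would eventually push $(P_t,H_t)$ below the frontier and trigger a stop, contradicting $T^*=\infty$ (this is precisely the argument in the proof of \autoref{prop:eff}). Hence $X=1$ and $P_\infty=1$. In part (a), $U_t\equiv 1$ gives $\rd H_t=\beta_H(1,H_t)\,\rd t$ with $\beta_H(1,\cdot)>0$; were $H_t$ bounded it would converge to a finite $h^\star$, forcing $\dot H_t\to\beta_H(1,h^\star)>0$ by continuity and contradicting convergence, so $H_\infty=+\infty$. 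In part (b), $\beta_H\equiv 0$ keeps $H_t\equiv H_0$, so this branch contributes the single point $P_\infty=1$, $H_\infty=H_0$.

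On the event $\{T^*<\infty\}$, $P$ freezes at $P_{T^*}$ by the first structural fact, and the stopping condition $V(P_{T^*},H_{T^*})\le k$ with monotonicity of $V$ in $p$ gives $P_{T^*}\le p^*(H_{T^*})$. For part (a) the safe arm then drives $H$ down: $\beta_H(0,\cdot)<0$ plus continuity excludes a finite limit exactly as above, so $H_\infty=-\infty$; and $P_\infty=P_{T^*}\le p^*(H_{T^*})\le p^*(-\infty)$ by the second fact, giving the first alternative $P_\infty\in[0,p^*(-\infty)]$. For part (b), $H_\infty=H_0$ and the frontier is the fixed level $p^*(H_0)$, so it remains to bound $P_{T^*}$ from below. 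For $t<T^*$ the path lies strictly above the frontier, so the left limit satisfies $P_{T^*-}\ge p^*(H_0)$; writing $P_{T^*}=P_{T^*-}+\Delta P_{T^*}$ and combining the jump bound $\Delta P\ge-\epsilon$ with the stopping upper bound $P_{T^*}\le p^*(H_0)$ yields $P_{T^*}\in[p^*(H_0)-\epsilon,p^*(H_0)]$. Taking the union with the $\{T^*=\infty\}$ branch gives the stated set, and $\epsilon=0$ recovers the no-jump dichotomy: hit the frontier in finite time, or converge to $(1,H_0)$.

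The hard part will be the careful treatment of $P$ at and after the stopping time. Two points need attention: (i) justifying rigorously that $P$ is constant after $T^*$, which rests on the degeneracy of $\cG$ at $u=0$ forced by \autoref{ass:str} and on confirming that no information leaks in through the deterministic increments of $(H,R)$; and (ii) the jump accounting at $T^*$ in part (b), where $P$ may jump across the frontier, so the left-limit bound $P_{T^*-}\ge p^*(H_0)$, the controlled jump size $\Delta P\ge-\epsilon$, and the stopping inequality must be combined in the correct order. By contrast, the divergence of $H$ is routine once continuity and the strict sign of $\beta_H$ are used to rule out a finite limit.
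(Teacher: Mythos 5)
Your proof is correct and follows essentially the same route as the paper's (very terse) proof: the same dichotomy between $T^*<\infty$ and $T^*=\infty$, with $P$ frozen under the safe arm, \autoref{prop:convergence} plus \autoref{ass:eff} forcing $P_\infty=1$ on the invest-forever branch, and the sign of $\beta_H$ driving $H$ monotonically to $\pm\infty$. The details you supply---the monotonicity of $p^*(h)$ via \autoref{lem:mon}, the freezing of $P$ from the degeneracy of $\cG$ at $u=0$, and the jump accounting $P_{T^*-}\ge p^*(H_0)$, $\Delta P_{T^*}\ge-\epsilon$ at the stopping time---are exactly the steps the paper's two-sentence proof leaves implicit.
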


\begin{proof}
\begin{inparaenum}[(a)]
\item  If $(P,H)$ drops below the decision frontier, $P$ is frozen and $H$ decreases to $-\infty$. Otherwise, $P$ increases to $1$ and $H$ to $\infty$. 
\item $H$ is constant and $P$ either converges to $1$ or drops below the decision frontier and remains there forever.
\end{inparaenum} 
\end{proof}

\begin{remark}\label{rem:res:evo}
\begin{itemize}
\item \autoref{prop:pop} shows that agents in classical bandit models accumulate at or near the decision frontier, whereas they drift away from the frontier in restless bandit models. This leads to different predictions about the effectiveness of incentive schemes designed to increase investment in the risky arm. 

\item To wit, consider a subsidy for investment in the risky arm or, alternatively, a penalty for investment in the safe arm. These incentives lower the decision frontier in the $(p,h)$-domain. Some agents, who were previously below the frontier, will now find themselves above the frontier and will find it optimal to start investing in the risky arm again. The number of such agents can be expected to be very small in restless bandit models because agents keep drifting away from the frontier once they stopped investing in the risky arm. Consequently, incentives have negligible effects on average investment, in particular if they are carried out late in time. In contrast, in classical bandit models even small shifts of the decision frontier have large effects on average investment because there are many agents at or near the frontier, namely all agents who ever stopped investing in the risky arm. 

\item Thus, our model provides an explanation for the ineffectiveness of subsidies designed to boost investment in projects with uncertain payoffs. Our explanation does not rely on switching costs. 
\end{itemize}	
\end{remark}

\section{Conclusions.}\label{sec:con}

We presented an extension of classical bandit models of investment under uncertainty motivated by dynamic aspects of resource development. The extension is new and has economic significance in a wide range of real world settings. 

We dealt with the delicate issue of setting up the control problem with partial observations in continuous time. As explained in \autoref{sec:lit:sep}, recent standard formulations of optimal control under partial observation do not apply in our general setting. In addition to its importance to the theory of optimal control, our solution is also a contribution to the bandit literature. 

Our framework encompasses both the exponential bandit model of \citet{KellerRadyCripps}, where jumps can occur only for high type agents, and the Poisson and Levy bandit models of \citet{KellerRady2010, KellerRady2015} and \citet{CohenSolan}, where it is assumed that one jump measure is absolutely continuous with respect to the other.

We solved the restless bandit model by an unconventional approach.  Instead of using the HJB equation or a setup using time changes, we discretized the problem in time and showed that any optimal strategy can be modified such that the agent never invests after a period of not investing and such that the modified strategy is still optimal. 

Our models constitute a new class of indexable restless bandit models. While other classes of indexable bandits are known, they either involve no learning about one's type (\citet{Glazebrook2006b}), do not allow history-dependent payoffs (\citet{Washburn}), or are restricted to very specific reward processes (e.g. finite-state Markov chains as in \citet{NinoMora2001}). 

\appendix

\section{Notation.}\label{sec:app:not}

For any Polish space $\bS$, $B(\bS)$ will denote the space of $\R$-valued Borel-measurable functions on $\bS$, $C(\bS)$ the continuous functions, $C_b(\bS)$ the bounded continuous functions, and $\mathcal P(\bS)$ the space of probability measures on $\bS$. $D_{\bS}[0,\infty)$ denotes the space of $\bS$-valued c\`adl\`ag functions on $[0,\infty)$ with the Skorokhod topology, $L_{\bS}[0,\infty)$ the c\`agl\`ad functions, and $C_{\bS}[0, \infty)$ the subspace of continuous functions. If $\bS$ is endowed with a differentiable structure, then $C^k_b(\bS)$ denotes the functions with $k$ bounded continuous derivatives. 

Throughout the paper, all filtrations are assumed to be complete, and all processes are assumed to be progressively measurable. The law of a random variable $X$ is denoted by $\mathcal L(X)$. The completion of the filtration generated by a process $Y$ is denoted by $\{\cF^Y_t\}$. If $Y$ has left-limits, they are denoted by $Y_-$, i.e., $Y_{t-}=\lim_{s\nearrow t}Y_t$. If $Y$ is of finite variation, $\on{Var}(Y)$ denotes its variation process. $H \bullet Y$ denotes stochastic integration of a predictable process $H$ with respect to a semimartingale $Y$ and $H * \mu$ with respect to a random measure $\mu$. $I$ denotes the identity process $I_t=t$. When $T$ is a stopping time, we write $Y^T$ and $\mu^T$ for the stopped versions of $Y$ and $\mu$. Stochastic intervals are denoted by double brackets, e.g., $\llbracket 0,T\rrbracket \subset [0,\infty]\x\Omega$. $Y^c$ denotes the continuous local martingale part of $Y$. A superscript $\top$ denotes the transpose of a matrix or vector. 

\section{Controlled martingale problems.}\label{sec:app:mgp}

\begin{definition}[Martingale problem for $(\cA,F)$]\label{def:app:mgp_A}
Let $F$ be a c\`agl\`ad adapted $\bU$-valued process on the space $D_{\bY}[0,\infty)$ with its canonical filtration.
\begin{enumerate}[(i)]
\item $(X,Y,T)$ is a solution of the stopped martingale problem for $(\cA,F)$ if there exists a filtration $\{\cF_t\}$, such that $X$ is an $\cF_0$-measurable $\bX$-valued random variable, $Y$ is an $\{\cF_t\}$-adapted c\`adl\`ag $\bY$-valued process, $T$ is an $\{\cF_t\}$-stopping time, and for each $f \in \cD(\cA)$, 
\begin{equation}
f(X,Y_{t\wedge T})-f(X,Y_0)
-\int_0^{t\wedge T}\cA f(F(Y)_s,X,Y_s) \rd s
\end{equation}
is an $\{\cF_t\}$-martingale. 

\item If $T=\infty$ almost surely, then $(X,Y)$ is a solution of the martingale problem for $(\cA,F)$.

\item $(X,Y)$ is a solution of the local martingale problem for $(\cA,F)$ if there exists a filtration $\{\cF_t\}$ and a sequence of $\{\cF_t\}$-stopping times $\{T_n\}$ such that $T_n\to\infty$ almost surely and for each $n$, $(X,Y,T_n)$ is a solution of the stopped martingale problem for $(\cA,F)$.

\item Local uniqueness holds for the martingale problem for $(\cA,F)$ if for any solutions $(X',Y',T')$, $(X'',Y'',T'')$ of the stopped martingale problem for $(\cA,F)$, equality of the law of $(X',Y'_0)$ and $(X'',Y''_0)$ implies the existence of a solution $(X,Y,S'\vee S'')$ of the stopped martingale problem for $(\cA,F)$ such that $(X_{\cdot\wedge S'},S')$ has the same distribution as $(X'_{\cdot\wedge T'},T')$, and $(X_{\cdot\wedge S''},S'')$ has the same distribution as $(X''_{\cdot\wedge T''},T'')$.

\item The martingale problem for $(\cA,F)$ is well-posed if local uniqueness holds for the martingale problem for $(\cA,F)$ and for each $\nu \in \cP(\bX\x\bY)$, there exists a solution $(X,Y)$ of the local martingale problem for $(\cA,F)$ such that the law of $(X,Y_0)$ is $\nu$.
\end{enumerate}
\end{definition}

\begin{definition}[Martingale problem for $(\cG,F)$]
Let $F$ be a c\`agl\`ad adapted $\bU$-valued process on $D_{[0,1]\x\bY}[0,\infty)$ with its canonical filtration.
\begin{enumerate}[(i)]
\item $(P,Y,T)$ is a solution of the stopped martingale problem for $(\cG,F)$ if there exists a filtration $\{\cF_t\}$, such that $(P,Y)$ is an $\{\cF_t\}$-adapted c\`adl\`ag $[0,1]\x\bY$-valued process, $T$ is an $\{\cF_t\}$-stopping time, and for each $f \in \cD(\cG)$, 
\begin{equation}
f(P_{t\wedge T},Y_{t\wedge T})-f(P_0,Y_0)
-\int_0^{t\wedge T}\cA f(F(P,Y)_s,P_s,Y_s) \rd s
\end{equation}
is an $\{\cF_t\}$-martingale. 

\item Solutions of the (local) martingale problem, local uniqueness, and well-posedness are defined in analogy to \autoref{def:app:mgp_A}.
\end{enumerate}
\end{definition}

\section{Noncommutativity of filtering and relaxation.}\label{sec:app:non}

To see the non-commutativity between filtering and relaxation, let us tentatively define relaxed controls with partial observations as tuples $(\Lambda,X,Y)$ such that for each $f \in \cD(\cA)$,
\begin{equation}
f(X,Y_t)-f(X_0,Y_0)-\int_0^t\int_{\bU} \cA f(u,X,Y_{s-})\Lambda_s(\rd u)\rd s
\end{equation}
is a martingale, where $\Lambda$ is a $\{\cF^Y_t\}$-predictable $\cP(\bU)$-valued process. If a well-posedness condition similar to the one in \autoref{def:po:co} holds and $P_t=\bE[X\mid\cF^Y_t]$ is the filter, then it can be shown\footnote{This follows by adapting the proof of \autoref{lem:fi} to relaxed control processes.} that a jump $\Delta Y_t$ of the observable process leads to a jump $\Delta P_t=\ol j(\Lambda_t,P_{t-},Y_{t-},\Delta Y_t)$ of the filter, where
\begin{equation}
\ol j(\lambda,p,y,z) = \frac{\int_\bU p \phi_2(u,y,z)\lambda(\rd u)}{\int_\bU \Big(p  \phi_2(u,y,z)+(1-p)\big(2-\phi_2(u,y,z)\big)\Big)\lambda(\rd u)}-p.
\end{equation}
Thus, $\Delta P_t$ is uniquely determined by $\Delta Y_t$ and the information before $t$. In contrast, this is not the case in the relaxation of the separated control problem, where a jump $\Delta Y_t$ can lead to different values of $\Delta P_t$. Indeed, the jump measure of $(P,Y)$ is compensated by the predictable random measure
\begin{equation}
\nu(\rd p,\rd y) = 
\int_\bY \delta_{j(u,P_{t-},Y_{t-},y)}(\rd p)
\ol K(u,P_{t-},Y_{t-},\rd y)\Lambda_t(\rd u).
\end{equation}

An interpretation is that the two cases differ in how uncertainty regarding $u$ is handled. In the former case, the control $u$ in the support of $\Lambda_t$ is treated as unknown in the process of updating the filter. Therefore, the jump height of the filter depends on $\Lambda_t$, but not on a random choice of $u$ in the support of $\Lambda_t$. In the latter case, however, $u$ is treated as known but random. Different choices of $u$ in the support of $\Lambda_t$ might lead to different probabilities for a jump $\Delta Y_t$, and consequently to different jumps of the filter. 

\section{Proofs of \autoref{sec:sto}.}\label{sec:app:sto}

Lemmas \ref{lem:fi}--\ref{lem:se_to_po} below are used to establish \autoref{thm:equiv}. Assumptions \ref{ass:A}--\ref{ass:wel2} are in place.


\begin{lemma}[Filtering]\label{lem:fi}
If $(U,X,Y)$ is a control with partial observations and $P$ is a c\`adl\`ag version of the martingale $\bE[X\mid\cF^Y_t]$, then $(U,P,Y)$ is a separated control of the same value as $(U,X,Y)$. 
\end{lemma}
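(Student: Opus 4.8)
The plan is to prove the two assertions separately: the equality of values, which is an elementary consequence of the tower property, and the statement that $(U,P,Y)$ is a separated control, which is the genuine filtering computation and the heart of the lemma.

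For the equality of values I would proceed as follows. Since both $U=F(Y)$ and $Y$ are adapted to $\{\cF^Y_t\}$, the tower property gives, for each fixed $t$,
\[ \bE\big[b(U_t,X,Y_t)\mid\cF^Y_t\big]=b(U_t,1,Y_t)P_t+b(U_t,0,Y_t)(1-P_t)=\ol b(U_t,P_t,Y_t), \]
using that $X$ is $\{0,1\}$-valued and $P_t=\bE[X\mid\cF^Y_t]$. Taking expectations, multiplying by $\rho e^{-\rho t}$ and integrating over $t$, with Fubini justified by the boundedness of $b$ (\autoref{ass:bou}), yields $J^{\po}(U,X,Y)=J^{\se}(U,P,Y)$.

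For the main assertion I would use the reference-measure (Kallianpur--Striebel) approach, which is exactly what Assumptions \ref{ass:gir} and \ref{ass:novikov} are tailored to. By well-posedness (\autoref{def:po:co} with \autoref{ass:wel}), the conditional laws $\bP^0,\bP^1$ of $Y$ given $X=0,1$ are the unique solutions of the martingale problem for $(\cA,F)$ started from $X=0,1$. I would then introduce the symmetric reference measure $\ol\bP$ whose jump kernel is the average $\tfrac12\big(K(u,1,\cdot)+K(u,0,\cdot)\big)$. This dominates both $\bP^0$ and $\bP^1$ even when their jump kernels are not equivalent, and it explains the appearance of $\phi_2$ and $2-\phi_2$ in $\cG$: these are precisely the densities of $K(u,1,\cdot)$ and $K(u,0,\cdot)$ against the average. \autoref{ass:gir} is the Girsanov drift-correction identity making $\phi_1$ the continuous kernel, and \autoref{ass:novikov} (in Lépingle's form) guarantees that the Dol\'eans--Dade exponentials $L^0_t=d\bP^0/d\ol\bP$ and $L^1_t=d\bP^1/d\ol\bP$ on $\cF^Y_t$ are uniformly integrable martingales, so the change of measure is legitimate. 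Bayes' rule then gives the filter explicitly as
\[ P_t=\frac{P_0\,L^1_t}{P_0\,L^1_t+(1-P_0)\,L^0_t}. \]

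The decisive and most laborious step is to differentiate this expression. Applying It\^o's formula for semimartingales with jumps to $L^0$ and $L^1$, then to the ratio $P_t$, produces the dynamics of the filter; substituting these into a second application of It\^o to $f(P_t,Y_t)$ for $f\in C^2_b([0,1]\x\bY)$ and collecting terms should reproduce exactly the generator $\cG$ of \autoref{ass:G}. The continuous martingale part yields the drift $\ol\beta$, the second-order term $\tfrac12 p^2(1-p)^2\phi_1^\top\sigma^2\phi_1$ in $p$, and the mixed term $p(1-p)\sigma^2\phi_1$, while the jump part yields the integral term with $j$ against $\ol K$; the algebraic identity
\[ p+j(u,p,y,z)=\frac{p\,K(u,1,y,\rd z)}{p\,K(u,1,y,\rd z)+(1-p)\,K(u,0,y,\rd z)} \]
is what aligns the jump term with $\cG$. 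Matching all terms is where the main difficulty lies, both in the bookkeeping and in keeping careful track of predictable versus optional versions at jump times. Once this is done, $f(P_t,Y_t)-f(P_0,Y_0)-\int_0^t\cG f(U_s,P_s,Y_s)\,\rd s$ is a local martingale, and boundedness of $f$ and $\cG f$ (\autoref{ass:bou} together with $f\in C^2_b$) upgrades it to a true $\{\cF^Y_t\}$-martingale, so $(U,P,Y)$ satisfies \autoref{def:se:co} and is a separated control.
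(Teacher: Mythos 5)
Your splitting is sound, and the value-equality half is exactly the paper's Step~6 (by the tower property, $\ol b(U,P,Y)$ is the $\{\cF^Y_t\}$-optional projection of $b(U,X,Y)$, and Fubini applies since $b$ is bounded). Your reading of the assumptions is also accurate: $\phi_2$ and $2-\phi_2$ are indeed the densities of $K(u,1,\cdot)$ and $K(u,0,\cdot)$ against the average kernel, your identity $p+j(u,p,y,z)=p\,K(u,1,y,\rd z)/\bigl(p\,K(u,1,y,\rd z)+(1-p)\,K(u,0,y,\rd z)\bigr)$ is correct, and \autoref{ass:novikov} is precisely the L\'epingle--M\'emin condition for the \emph{symmetric} Girsanov parameters $\bigl(\tfrac12\phi_1,\phi_2\bigr)$, since $\tfrac12(\phi_1/2)^\top\sigma^2(\phi_1/2)=\tfrac18\phi_1^\top\sigma^2\phi_1$. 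However, there is a genuine gap at the foundation of your reference-measure scheme: the measure $\ol\bP$ under which $Y$ has jump kernel $\tfrac12\bigl(K(u,1,\cdot)+K(u,0,\cdot)\bigr)$ (and correspondingly averaged drift) is \emph{not} shown to exist, and its existence does not follow from Assumptions~\ref{ass:A}--\ref{ass:wel2}. It is not the mixture $\tfrac12\bP^1+\tfrac12\bP^0$ (under that mixture the compensator of $Y$ involves the filter with prior $\tfrac12$, not the fixed symmetric kernel); it is the solution of a martingale problem for a \emph{different} generator, outside the family $\{\cA(\cdot,x,\cdot)\colon x\in\bX\}$, under the given path functional $F$ --- and well-posedness is assumed only for $(\cA,F)$. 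Nor can $\ol\bP$ be built by Girsanov from $\bP^0$ (that needs $K_1\ll K_0$, not assumed) or from $\bP$ (the jump density $\tfrac12(K_1+K_0)/\ol K(u,P,y,\cdot)$ blows up as $P\to\{0,1\}$, which reintroduces the very localization you were hoping to avoid and is near-circular, since the dynamics of $P$ are what you are deriving).

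The paper sidesteps this by never introducing an auxiliary measure: it proves directly that $P/P_0$ is the $\{\cF^Y_t\}$-density process of the conditional law $\bP_1$ relative to $\bP$ itself, identifies it on $\llbracket 0,T_n\rrbracket$ with an explicit exponential $\mathcal{E}(L^n)$ built from the $p$-dependent parameters $\psi_1(u,p,y)=(1-p)\phi_1(u,y)$ and $\psi_2$, where the L\'epingle bound degenerates as $p\to0$ --- hence the stopping times $T_n$ of \autoref{equ:T_n} --- and then extends the martingale property of $f(P,Y)-f(P_0,Y_0)-\int_0^\cdot\cG f(U_s,P_s,Y_s)\,\rd s$ past the hitting time of the boundary by the remainder argument $R_n$, using $\cG f(u,0,y)=\ol\cA g(u,0,y)$. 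To be fair, if $\ol\bP$ \emph{were} available your route would handle the boundary more gracefully (the Bayes denominator $P_0L^1+(1-P_0)L^0$ is the density of $\bP$ relative to $\ol\bP$ and stays $\bP$-a.s.\ positive, so It\^o applies globally), and your identification of $L^0,L^1$ with Dol\'eans--Dade exponentials would still require the local-uniqueness step you only gesture at (Girsanov turns $\mathcal{E}(L)\,\ol\bP$ into a solution of the martingale problem for $(\cA,F)$ with $X=1$, and well-posedness identifies it with $\bP^1$ --- this is the analogue of Step~4 of the paper's proof and must be spelled out). So the proposal is a viable classical Kallianpur--Striebel strategy in outline, but as written it rests on an unproven existence claim and defers exactly the computation and degeneracy analysis that constitute the paper's actual proof.
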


\begin{proof}
\emph{Step 1 (Filter as change of measure from $\bP$ to $\bP_1$).}
If $P_0\in\{0,1\}$, then $P_t\equiv P_0$ is constant and equal to $X$. In this case it is trivial to check that $(U,P,Y)$ is a separated control of the same value as $(U,X,Y)$. In the sequel, we assume that $0<P_0<1$. Then the measure $\bP$ can be conditioned on the event $X=x$, for all $x \in \bX$. This yields measures $\bP_x$ such that
\begin{align}
  	\bP_1(X=1)&=1, & \bP_0(X=0)&=1, & \bP &= P_0 \bP_1+(1-P_0)\bP_0.
\end{align}
The process $P/P_0$ is the $\{\cF^Y_t\}$-density process of $\bP_1$ relative to $\bP$ because for all $A\in \cF^Y_t$, 
\begin{equation}\label{equ:P_via_D}
    \int_A P_t \rd\bP
    =\int_A\bE[X|\cF^Y_t]\rd\bP
    =\int_AX \rd\bP=P_0 \bP_1(A) .
\end{equation}

\emph{Step 2 (Stochastic exponential relating the martingale problems under $\bP$ and $\bP_1$).} For each $f \in \cD(\cA)$, let $\ol{\cA}f$ be the average of $\cA f$ over $x\in\bX$ with weights $p$ and $(1-p)$,
\begin{equation}
    \ol{\cA}f(u,p,y) = p \cA f(u,1,y)+(1-p)\cA f(u,0,y).
\end{equation}
Let $f\in\cD(\cA)$ and set $g(x,y)=f(1,y)$. Then $g \in \cD(A)$ and $g$ is constant in $x \in \bX$. By \autoref{def:po:co}, the process
\begin{equation}
    g(1,Y)-g(1,Y_0)-\cA g(U,X,Y) \bullet I
\end{equation}
is a martingale under $\bP$. Taking $\{\cF^Y_t\}$-optional projections, one obtains that the process 
\begin{equation}\label{equ:Mf}
    M=g(1,Y)-g(1,Y_0)- \ol{\cA}g(U,P,Y) \bullet I
\end{equation}
is an $\{\cF^Y_t\}$-martingale under $\bP$. Moreover, as $X=1$ holds $\bP_1$-a.s., the process 
\begin{equation}\label{equ:Mtildef}
    \wt M = g(1,Y)-g(1,Y_0)-\cA g(U,1,Y) \bullet I
\end{equation}
is an $\{\cF^Y_t\}$-martingale under $\bP_1$. The difference between these two processes is given by
\begin{multline}\label{equ:m_minus_tilde_m}
M-\wt M
=\partial_y g(1,Y)\big(\beta(U,1,Y)-\ol\beta(U,P,Y)\big)
\bullet I
\\
+\int_\bY\big(g(1,Y+z)-g(1,Y)-\partial_y g(1,Y)\chi(z)\big)
\big(K(U,1,Y,\rd z)-\ol K(U,P,Y,\rd z)\big) 
\bullet I.
\end{multline}
For any $p>0$, let $\psi_1$ and $\psi_2$  be defined by
\begin{equation}
  	\psi_1(u,p,y)=(1-p)\phi_1(u,y), 
  	\quad 
  	\psi_2(u,p,y,z)=\frac{\phi_2(u,y,z)}{p \phi_2(u,y,z)+(1-p)\big(2-\phi_2(u,y,z)\big)},
\end{equation} 
where $\phi_1,\phi_2$ stem from \autoref{ass:gir}. Then the following relations hold for any $p>0$:
\begin{equation}\label{equ:psi}\begin{aligned}
    \be(u,1,y)-\ol\be(u,p,y)&=\si^2(u,y) \psi_1(u,p,y) 
    + \int_{\R^n} \big(\psi_2(u,p,y,z)-1 \big) \chi(z) \ol K(u,p,y,\rd z) ,
    \\
    K(u,1,y,\rd z) &= \psi_2(u,p,y,z) \ol K(u,p,y,\rd z)
\end{aligned}\end{equation}
For any $n\in\mathbb N$, let $T_n$ be the stopping time
\begin{equation}\label{equ:T_n}
  	T_n = \inf \{ t \geq 0 \colon P_t < 1/n \text{ or } P_{t-}<1/n \text{ or } |Y_t| > n \} 
  	\wedge n.
\end{equation}
Since $P>0$ holds on any interval $\llbracket 0,T_n\llbracket$, \autoref{equ:psi} can be used to rewrite \autoref{equ:m_minus_tilde_m} as
\begin{multline}\label{equ:m_minus_tilde_m2}
M^{T_n}-\wt M^{T_n}
=
\partial_y g(1,Y) \big(\si^2(U,Y) \psi_1(U,P,Y) \big)
\one_{\llbracket 0, T_n \rrbracket} \bullet I
\\
+\int_\bY\big(g(1,Y+z)-g(1,Y)\big)
\big(\psi_2(U,P,Y,z)-1\big)\ol K(U,P,Y,\rd z) 
\one_{\llbracket 0, T_n \rrbracket}\bullet I.
\end{multline}
Let $\mu$ be the jump measure of $Y$, $\nu$ its $\{\cF^Y_t\}$-compensator under $\bP$, and
\begin{equation}\label{equ:Ln}
  	L^n = \psi_1(U,P_-,Y_-)\one_{\llbracket 0, T_n \rrbracket} \bullet Y^c 
    + \big(\psi_2(U,P_-,Y_-,z)-1\big) \one_{\llbracket 0, T_n \rrbracket} 
    * \big(\mu-\nu\big)(\rd z,\rd t) .
\end{equation}
Then $L^n$ is a local $\{\cF^Y_t\}$-martingale under $\bP$. Keeping track of the terms in It\=o's formula the same way as in the proof of \citet[Theorem II.2.42]{Jacod} shows that
\begin{equation}
  	M = \partial_y g(1,Y_-) \bullet Y^c 
  	+ \big(g(1,Y_-+z)-g(1,Y_-)\big) * \big(\mu-\nu\big)(\rd z,\rd t)
\end{equation}
is the decomposition of $M$ into its continuous and purely discontinuous local martingale parts. It is now easy to calculate the predictable quadratic covariation of $M^{T_n}$ and $L^n$. Indeed, a comparison with \autoref{equ:m_minus_tilde_m2} shows that 
\begin{equation}
  	M^{T_n}-\wt M^{T_n} = \langle M^{T_n}, L^n \rangle.
\end{equation}
Equivalently, letting $D^n=\mathcal E(L^n)$ denote the stochastic exponential of $L^n$, 
\begin{equation}\label{equ:tilde_m_angle_bracket}
  	\wt M^{T_n} = M^{T_n} - \langle M^{T_n}, L^n \rangle 
  	= M^{T_n} - \frac{1}{D^n_-} \bullet \langle M^{T_n}, D^n \rangle.
\end{equation}

\emph{Step 3 (Martingale property of stochastic exponential).} We will show that the local martingale $D^n$ is a martingale by verifying the conditions of \citet[Th\'eor\`eme IV.3]{Lepingle}. For any $w \in [0,1]$, 
\begin{equation}\label{equ:p1p}
    \frac{p(1-p)}{p w+(1-p)(1-w)} 
    \leq \frac{p(1-p)}{p \wedge (1-p)} \leq 1
\end{equation}
holds because the nominator on the left-hand side is a convex combination of $p$ and $(1-p)$. Replacing $w$ by $\phi_2(u,y,z)/2$ in \autoref{equ:p1p} one obtains
\begin{equation}\label{equ:psi_and_phi}
  	\big(\psi_2(u,p,y,z)-1\big)^2 = 
    \left(\frac{2(1-p)(\phi_2(u,y,z)-1)}
    {p \phi_2(u,y,z)+(1-p)\big(2-\phi_2(u,y,z)\big)}\right)^2
    \leq \frac{1}{p^2}\big(\phi_2(u,y,z)-1\big)^2.
\end{equation}
This inequality relates the values of $\phi_2,\psi_2$ under the transformation $w\mapsto (w-1)^2$. It can equivalently be expressed in terms of the functions $w\mapsto w \log(w)-w+1$ or $w \mapsto 1-\sqrt{w(2-w)}$ because for all $w \in [0,2]$,
\begin{alignat}{2}
	\label{equ:y1}
  	w \log(w)-w+1 & \leq (w-1)^2 &&\leq 4 \big(w \log(w)-w+1\big), \\
    \label{equ:y2}
    1-\sqrt{w(2-w)} &\leq (w-1)^2 &&\leq 2 \left(1-\sqrt{w(2-w)}\right).
\end{alignat}
Actually, the first inequality in \autoref{equ:y1} holds for all $w\geq 0$,
which implies that
\begin{multline}\label{equ:bound1}
  	\int \Big(\psi_2(u,p,y,z) \log\big(\psi_2(u,p,y,z)\big)
  	-\psi_2(u,p,y,z)+1\Big) \ol K(u,p,y,\rd z) 
  	\\
    \leq \int \big(\psi_2(u,p,y,z)-1\big)^2 \ol K(u,p,y,\rd z) 
    \leq \frac{1}{p^2} \int \big(\phi_2(u,y,z)-1\big)^2 \ol K(u,p,y,\rd z)
    \\
    \leq \frac{2}{p^2} \int \left(1-\sqrt{\phi_2(u,y,z)\big(2-\phi_2(u,y,z)\big)}\right) 
    \ol K(u,p,y,\rd z) 
    \\
    \leq \frac{4}{p^2} \int \left(1-\sqrt{\phi_2(u,y,z)\big(2-\phi_2(u,y,z)\big)}\right) 
    \ol K(u,\tfrac12,y,\rd z) .
\end{multline}
By \autoref{ass:novikov}, this expression is bounded as long as $p$ stays away from zero. Moreover, by the same assumption, the following expression is bounded:
\begin{equation}\label{equ:bound2}
  	\psi_1(u,p,y)^\top \sigma^2(u,y)\psi_1(u,p,y) 
  	=(1-p)^2 \phi_1(u,y)^\top \sigma^2(u,y)\phi_1(u,y).
\end{equation}
Therefore, 
\begin{equation}
\bE\left[\exp\left(\frac12 \langle L^{n,c},L^{n,c}\rangle_\infty
+\big((1+z)\log(1+z)-z\big) * \nu^{T_n}_\infty \right)\right] <\infty,
\end{equation}
which is the condition of \citet[Th\'eor\`eme IV.3]{Lepingle} implying that $D^n=\mathcal E(L^n)$ is a uniformly integrable martingale. Therefore, $D^n_{T_n} \bP$ is a probability measure. 

\emph{Step 4 (Identification of stochastic exponential and filter).} By \autoref{def:po:co}, $U=F(Y)$ for a process $F$ on $D_\bY[0,\infty)$. We will use the well-posedness of the martingale problem for $(\cA,F)$ to show that $D^n_{T_n}\bP$ agrees with $\bP_1$ on $\cF^Y_{T_n}$. By Girsanovs' theorem 
and \autoref{equ:tilde_m_angle_bracket}, $\wt M^{T_n}$ is an $\{\cF^Y_t\}$-martingale under $D^n_{T_n} \bP$. The process $\wt M$ can be written as
\begin{equation}
\wt M=f(1,Y)-f(1,Y_0)-\cA f(U,1,Y)\bullet I
\end{equation}
because $\cA$ has no derivatives or non-local terms in the $x$-direction. As $f \in \cD(\cA)$ was chosen arbitrarily, the tuple $(1,Y)$ under the measure $D^n_{T_n}$ solves the martingale problem for $(\cA,F)$ stopped at $T_n$. The same can be said about the tuple $(X,Y)$ under the measure $\bP_1$. Moreover, the distribution of $(1,Y_0)$ under $D^n_{T_n}\bP$ coincides with the distribution of $(X,Y_0)$ under $\bP_1$. According to \autoref{def:po:co}, local uniqueness holds for the martingale problem. It follows that $D^n_{T_n} \bP$ coincides with $\bP_1$ on $\cF^Y_{T_n}$. The characterization of $P/P_0$ as the density process of the measure $\bP_1$ relative to $\bP$ obtained in Step~1 implies that $P=P_0 D^n_{T_n}$ holds on $\llbracket 0,T_n\rrbracket$.

\emph{Step 5 (Filter solves the martingale problem with generator $\cG$).}
To show that $(U,P,Y)$ is a separated control, one has to prove that for any $f \in \cD(\cG)$, 
\begin{equation*}
	N = f(P,Y)-f(P_0,Y_0)-\cG f(U,P,Y)\bullet I
\end{equation*}
is an $\{\cF^Y_t\}$-martingale under $\bP$. On the interval $\llbracket 0, T_n \rrbracket$, $P$ agrees with $P_0 D^n$ and consequently satisfies $P = P_- \bullet L^n$. Therefore, the jumps of $P$ on this interval are
\begin{equation}
  	\De P = P_- \; \De L^n 
  	= P_- \big(\psi_2(U,P_-,Y_-,\De Y)-1 \big) \one_{\De Y \neq 0} 
  	= j(U,P_-,Y_-,\De Y) \one_{\De Y \neq 0} ,
\end{equation}
where the function $j$ is defined in \autoref{ass:G}. Moreover, on the same interval $\llbracket 0, T_n \rrbracket$,
\begin{equation}\begin{aligned}
\langle P^c,P^c\rangle
&= P_-^2 \bullet \langle L^{n,c},L^{n,c} \rangle 
= \sum_{i,j} P_-^2 \psi_{1,i}(U,P_-,Y_-) \psi_{1,j}(U,P_-,Y_-)\bullet 
\langle Y^{i,c},Y^{j,c} \rangle 
\\
&= P^2(1-P)^2 \phi_1(U,Y)^\top \si(U,Y)^2 
\phi_1(U,Y) \bullet I
\\
\langle P^c,Y^c\rangle
&= P_-^2 \bullet \langle L^{n,c},Y^c \rangle 
= P_-^2 \psi_1(U,P_-,Y_-)^\top \bullet \langle Y^c,Y^c \rangle 
= P^2 \psi_1(U,P,Y)^\top \sigma^2(U,Y) \bullet I 
\\
\langle Y^c,Y^c \rangle&=\sigma^2(U,Y)\bullet I.
\end{aligned}\end{equation}
It follows from It\=o's formula and the definition of $\cG$ in \autoref{ass:G} that the stopped process $N^{T_n}$ is an $\{\cF^Y_t\}$-local martingale under $\bP$. It is also bounded by \autoref{ass:bou}, so it is a martingale. Setting $g(x,y)=f(0,y)$, one has $g\in\cD(\cA)$, and the process
\begin{equation}
M=g(0,Y)-g(0,Y_0)-\ol\cA g(U,P,Y) \bullet I
\end{equation}
is a martingale. Then it holds for any bounded stopping time $S$ and each $n \in \bN$ that
\begin{equation}
    \bE\big[N_S\big] 
    = \bE \big[N_{S \wedge T_n} + N_{S \vee T_n} - N_{T_n}\big]
    = \bE\big[N_{S \wedge T_n} 
    + M_{S \vee T_n} - M_{T_n}+ R_n\big]
    = \bE\big[R_n\big],
\end{equation}
with a remainder $R_n$ given by
\begin{equation}
R_n=\big(N_{S \vee T_n} - N_{T_n}\big)
-\big( M_{S \vee T_n} - M_{T_n}\big).
\end{equation}
Let $\omega \in \Omega$ and
\begin{equation}\label{equ:T}
	T=\lim_{n\to\infty}	T_n 
	=\inf \{t\geq 0\colon P_{t}=0 \text{ or } P_{t-}=0\}.
\end{equation}
If $P_{T-}(\omega)=0$, then $T_n(\omega)<T(\omega)$ holds for all $n \in \bN$. Otherwise, there is $k\in\bN$ such that $T_n(\omega)=T(\omega)$ holds for all sufficiently large $n$. Therefore, \begin{equation}
	\lim_{n\to\infty} R_n = \left\{
	\begin{aligned}
		&(N_{T-}-N_{T-})-(M_{T-}-M_{T-}),
		&&\text{if } P_{T-}=0 \text{ and } t<T,
		\\
		&(N_S-N_{T-})-(M_S-M_{T-}),
		&&\text{if } P_{T-}=0 \text{ and } t\geq T,
		\\
		&(N_{S\vee T}-N_{T})-(M_{S\vee T}-M_{T}),
		&&\text{if } P_{T-}\neq 0.
	\end{aligned}\right.
\end{equation}
It can be seen from the definitions of $\cA$ and $\cG$ in \autoref{ass:A,ass:G} that $\cG f(u,0,y)=\ol\cA g(u,0,y)$. Therefore, $N=M$ holds on the interval $\llbracket T,\infty\llbracket$, where $P=0$. Moreover, $N_{T-}=M_{T-}$ holds if $P_{T-}=0$. This implies that $\lim_{n\to\infty}R_n=0$. The processes $M^S$ and $N^S$ are bounded, which follows from \autoref{ass:bou} and the boundedness of $S$. Therefore,
\begin{equation}
	R_n=\one_{T_n<S}\big((N_S-N_{S\wedge T_n})
	-(M_S-M_{S\wedge T_n})\big)
\end{equation}
is bounded by a constant not depending on $n$. By the dominated convergence theorem, $\bE[N_S]=\lim_{n\to\infty}\bE[R_n]=0$. As this holds for all bounded stopping times $S$, we conclude that $N$ is a martingale. As $f \in \cD(\cG)$ was chosen freely, $(U,P,Y)$ is a separated control.

\emph{Step 6 (Value of separated control).} $(U,P,Y)$ has the same value as $(U,X,Y)$ because $\ol b(U,P,Y)$ is the $\{\cF^Y_t\}$-optional projection of $b(U,X,Y)$. 
\end{proof}

\begin{lemma}[Approximation]\label{lem:se:app}
Separated controls can be approximated arbitrarily well in value by separated step controls:
\begin{equation}\label{equ:lem:se:app}
  	V^\se(p,y)=\sup_\de V^{\se,\de}(p,y).
\end{equation}
Here $V^{\se,\de}$ denotes the value function obtained by admitting only processes $U$ which are piecewise constant on an equidistant time grid of step size $\delta>0$ in the separated control problem. 
\end{lemma}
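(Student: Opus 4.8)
The inclusion of step controls among all separated controls gives $\sup_\de V^{\se,\de}(p,y)\le V^\se(p,y)$ immediately, so the content is the reverse inequality. Fix $\eta>0$ and a separated control $(U,P,Y)\in\fC^\se_{p,y}$ with $J^\se(U,P,Y)\ge V^\se(p,y)-\eta$; it suffices to approximate its value by step controls to within an arbitrary $\epsilon>0$ and then let $\epsilon,\eta\to0$. Since $U$ is in general neither Markovian nor continuous (cut-off rules are the typical optimizers), I would \emph{not} discretize the trajectory of $U$ directly: freezing a closed-loop control on grid cells anticipates within the cell and destroys the martingale property in \autoref{def:se:co}. Instead I would argue at the level of \emph{occupation measures}, using the relaxed formulation of \autoref{rem:se:top} and the linear-programming description of controlled martingale problems, where the adaptedness constraint is already encoded in the measure and the discounted reward is a linear functional of it.

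The engine is a stability statement for the $\cG$-martingale problem: if $(U^n,P^n,Y^n)$ are separated controls whose relaxed controls converge vaguely, then the laws of $(P^n,Y^n)$ are tight and every weak limit is again a separated control driven by the limiting control. Tightness in Skorokhod space I would obtain from the uniform bounds on $\ol\beta$, $\sigma^2$, and the truncated second moment of $\ol K$ supplied by \autoref{ass:bou}, together with the no-escaping-mass condition \autoref{ass:bigjumps}, via Aldous' criterion applied to the semimartingale characteristics. Identification of any weak limit as a solution of the martingale problem follows by passing to the limit in the defining martingales, which is legitimate because the coefficients of $\cG$ depend continuously on $(u,p,y)$ by \autoref{ass:con}; local uniqueness for $(\cG,u)$ from \autoref{ass:wel2}---equivalently, uniqueness of the filtered martingale problem for $\cA$, which one extracts from \autoref{lem:fi}---then pins the limit down and upgrades convergence of subsequences to convergence of the whole sequence.

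With stability available, I would show that the occupation measures of step controls are dense in the admissible set. Given $(U,P,Y)$, replace its relaxed control on each cell $[k\de,(k+1)\de)$ by a pure $\bU$-valued control that, on a finer equidistant subgrid, visits each $u\in\bU$ for time fractions prescribed by the occupation measure on that cell; letting the subgrid refine and then $\de\to0$ produces genuine separated step controls whose relaxed controls converge vaguely to that of $(U,P,Y)$. Working with the occupation measure rather than with the path is exactly what keeps these controls adapted and admissible. By the stability statement, the associated laws of $(P,Y)$ converge, hence so do the values $J^\se$; a diagonal choice of the subgrid and of $\de$ yields step controls of value at least $J^\se(U,P,Y)-\epsilon$, giving $V^\se(p,y)\le\sup_\de V^{\se,\de}(p,y)$.

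The main obstacle is the convergence of the reward along this limit. Since \autoref{ass:bou} only makes $b$---hence $\ol b$---bounded and measurable, weak convergence of $(P^n,Y^n)$ does not by itself yield convergence of $\bE[\int_0^\infty\rho e^{-\rho t}\ol b(U^n_t,P^n_t,Y^n_t)\,\rd t]$. I would first prove the approximation for bounded continuous $\ol b$, where the discounted reward is a weakly continuous functional of the occupation measure, and then remove continuity by approximating $b$ by continuous functions and applying dominated convergence, verifying that the limiting occupation measure charges no discontinuity set; the exponential discounting and the uniform bound of \autoref{ass:bou} supply the domination. The tightness afforded by \autoref{ass:bigjumps} and the coefficient continuity of \autoref{ass:con} are precisely the hypotheses that make the limit identification in the stability step go through, which is why they are imposed.
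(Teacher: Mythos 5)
Your overall architecture (tightness of approximating controls, identification of weak limits as solutions of the $(\cG,\cdot)$-martingale problem, separate treatment of the reward's discontinuity) mirrors the stability machinery in Steps 3--5 of the paper's proof, but the proposal is missing the step that makes the limit identification legitimate, and your substitute for it does not work. You pin down the weak limit of your chattering approximations by invoking ``local uniqueness for $(\cG,u)$ from Assumption~\ref{ass:wel2}''. That assumption gives well-posedness only for \emph{constant} controls $u\in\bU$; it says nothing about uniqueness of the martingale problem driven by the original control, which is a random, history-dependent process (nor about its relaxed version), and no such uniqueness is available anywhere in the paper. Your parenthetical ``equivalently, uniqueness of the filtered martingale problem for $\cA$, which one extracts from Lemma~\ref{lem:fi}'' is also not right: Lemma~\ref{lem:fi} is the filtering lemma and contains no uniqueness statement. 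The paper resolves exactly this point by a reduction your proposal lacks: it disintegrates the law of an arbitrary separated control over control paths, $\bP(\rd U,\rd P,\rd Y)=\bP(\rd U)\,\bP_U(\rd P,\rd Y)$, so that under $\bP_U$ the control path is deterministic, and then proves---via the Kurtz--Nappo uniqueness theorem for the \emph{filtered} martingale problem, applied to deterministic control paths (Steps 1--2 of the paper's proof)---that every separated control with a given deterministic control path $U$ has one and the same value $J(U)$. After this reduction one only ever discretizes a deterministic path ($\Psi^{1/n}U\to U$ stably is then a purely deterministic fact, with no adaptedness issue), and the stability theorem (Theorem IX.3.9 of \citet{Jacod}) together with the filtered-problem uniqueness identifies the value of any subsequential limit as $J(U)$, with no need for uniqueness of the $(\cG,U)$-problem itself.

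A second concrete failure is that your chattering construction is circular in the weak formulation. There is no exogenous noise to hold fixed, so replacing the control changes the processes $(P,Y)$ and hence the filtration. If the step control on each grid cell is prescribed by the occupation measure of the \emph{original} control on that cell, it is a functional of the original state process (indeed of its future within the cell), not of the new one, so the resulting tuple is not a separated control in the sense of Definition~\ref{def:se:co}; if instead you realize step controls the only way they can be realized here---recursively, as feedback of the new state, gluing solutions for constant controls via Assumption~\ref{ass:wel2} as in Lemma~\ref{lem:se_to_po}---then their occupation measures have no reason to converge vaguely to that of the original control, and the convergence feeding your stability argument is gone. Conditioning on the control path is precisely what dissolves this circularity, which is why the paper does it first. (Your concern about $\ol b$ being merely bounded measurable is fair---the paper's own limit passage uses more than measurability---but it is secondary to the structural gap above.)
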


\begin{proof}
\emph{Step 1 (Filtered martingale problem).} 
Let $U$ be deterministic and let $(P,Y)$ be a c\`adl\`ag process with values in $[0,1]\x\bY$. We identify $P$ with the $\cP(\bX)$-valued process $\Pi$ given by
\begin{equation}\label{equ:p_abuse_of_notation}
\Pi_t(\rd x) = P_t \delta_1(\rd x) + (1-P_t)\delta_0(\rd x),
\end{equation}
where $\delta_x$ denotes the Dirac measure at $x\in\bX$. In line with \citet{KurtzOcone}, we say that $(\Pi,Y)$ is a solution of the filtered martingale problem for $(\cA,U)$ if
\begin{equation}\label{equ:filtered_mgp}
\int_\bX f(x,Y) \Pi(\rd x)
-\int_\bX f(x,Y_0) \Pi_0(\rd x)
-\int_\bX \cA f(U,x,Y) \Pi(\rd x) \bullet I
\end{equation}
is a martingale, for each $f \in \cD(\cA)$, and $\Pi$ is $\{\cF^Y_t\}$-adapted.

We will use \citet[Theorem 3.6]{KurtzNappo} to show that uniqueness holds for the filtered martingale problem. Thus, we have to verify points (i)-(vi) of Condition 2.1 in this paper. These are conditions on the operator $\cA f(U,x,y)$ in \eqref{equ:filtered_mgp}, interpreted as a time-dependent generator of $(X,Y)$. To put everything into a time-homogeneous framework, we work with the time-augmented process $(I,X,Y)$. Its generator $\cA^U$ is given by
\begin{align}
\cD(\cA^U)=C^2_b(\R\x\bX\x\bY),
&&
\cA^U g(t,x,y)
=\partial_t g(t,x,y)+\cA g_t(U_t,x,y),
\end{align}
where $g_t(x,y)=g(t,x,y)$.
For point (i), there is nothing to prove. For point (ii), one has to show that $\cA f(u,x,y)$ is continuous in $(u,x,y)$, for each $f\in \cD(\cA)$. To see this, let 
\begin{align}
m(u,x,y)&=1+\int_{\R^d}\big(\lvert z\rvert^2\wedge 1\big) K(u,x,y,\rd z), 
\\
\hat f(u,x,y,z)&=m(u,x,y)\frac{f(x,y+z)-f(x,y)-\partial_y f(x,y)\chi(z)}{\lvert z\rvert^2\wedge 1}, 
\\
\hat K(u,x,y,\rd z)&=\frac{\big(\lvert z\rvert^2\wedge 1\big) K(u,x,y,\rd z)}{m(u,x,y)}.
\end{align}
Then everything is set up such that
\begin{equation}\label{equ:jump_term}
\int_\bY \big(f(x,y+z)-f(x,y)-\partial_y f(x,y)\chi(z)\big) K(u,x,y,\rd z)
= \int_\bY \hat f(u,x,y,z) \hat K(u,x,y,\rd z).
\end{equation}
Now let $(u_n,x_n,y_n)_{n\in\mathbb N}$ be a sequence in $\bU\x\bX\x\bY$ converging to $(u,x,y)$. By \autoref{ass:con}, $m(u,x,y)$ is continuous and the measures $\hat K(u_n,x_n,y_n,\rd z)$ are weakly convergent. A version of Skorokhod's representation theorem for measures instead of probability measures (for example \citet{Startek2012}) implies that there are mappings $(Z_n)_{n\in\mathbb N}$ and $Z$ with values in $\bY$, all defined on the same measure space with finite measure, such that for each $n \in \mathbb N$, $Z_n$ has distribution $\hat K(u_n,x_n,y_n,\rd z)$, $Z$ has distribution $\hat K(u,x,y,\rd z)$, and $Z_n\to Z$ almost surely. By the dominated convergence theorem, 
\begin{equation}
\bE\big[\hat f(u_n,x_n,y_n,Z_n)\big] \to \bE\big[\hat f(u,x,y,Z)\big],
\end{equation}
which shows that the expression in \eqref{equ:jump_term} is continuous in $(u,x,y)$. This settles point (ii). Point (iii) is satisfied with $\psi=1$ by \autoref{ass:bou}. Points (iv) and (vi) are satisfied for $\cD(\cA^U)=C^2_b(\R\x\bX\x\bY)$. Finally, point (v) is satisfied because of \autoref{ass:wel}, which guarantees that for each constant, deterministic control $U$ and all initial conditions, there exists a c\`adl\`ag solution of the martingale problem for $\cA^U$ (cf. the discussion before Theorem 2.1 in \citet{Kurtz1998}). Moreover, by \autoref{ass:wel}, uniqueness holds for the martingale problem for $\cA^U$, which coincides with the martingale problem for $(\cA,U)$ from \autoref{def:mgp_A}. Thus, all conditions of \citet[Theorem 3.6]{KurtzNappo} are fulfilled and uniqueness holds for the filtered martingale problem.

\emph{Step 2 (Projecting separated controls to solutions of the filtered martingale problem).} 
Let $(U,P,X)$ be a separated control with deterministic control process $U$. Let $f\in\cD(\cG)$ be affine in the first variable $p$, i.e., 
\begin{equation}
f(p,x)=p f(1,x)+(1-p)f(0,x).
\end{equation}
Then $\cG f(u,p,x)$ is also affine in $p$, i.e., 
\begin{equation}
\cG f(u,p,x)
=p \cG f(u,0,x)+(1-p)\cG f(u,0,x)
=p \cA f(u,0,x)+(1-p)\cA f(u,0,x).
\end{equation}
This can be verified using the definition of $\phi_1$ and $\phi_2$, noting that all quadratic terms in $p$ cancel out in the expression of $\cG f(u,p,x)$. Identifying $P$ with $\Pi$ as in Step 1, one obtains that the process 
\begin{multline}
f(P,Y)-f(P_0,Y_0)-\cG f(U,P,Y) \bullet I
\\=
\int_\bX f(x,Y) \Pi(\rd x)
-\int_\bX f(x,Y_0) \Pi_0(\rd x)
-\int_\bX \cA f(U,x,Y) \Pi(\rd x) \bullet I
\end{multline}
is a martingale. If $\wt \Pi$ denotes the $\{\cF^Y_t\}$-optional projection of $\Pi$, then 
\begin{equation}
\int_\bX f(x,Y) \wt\Pi(\rd x)
-\int_\bX f(x,Y_0) \wt\Pi_0(\rd x)
-\int_\bX \cA f(U,x,Y) \wt\Pi(\rd x) \bullet I
\end{equation}
is also a martingale. Thus, $(\tilde \Pi,Y)$ is a solution of the filtered martingale problem. By the previous step, the law of $(\tilde \Pi,Y)$ is uniquely determined. An important consequence is that all separated controls sharing the same deterministic control process $U$ have the same value:
\begin{equation}\label{equ:j_of_u}
\bE\left[\int_0^\infty \rho e^{-\rho t}
\ol b(U_t,P_t,Y_t)\rd t\right]	
=
\bE\left[\int_0^\infty \rho e^{-\rho t}
\ol b(U_t,\wt P_t,Y_t)\rd t\right]
=:J(U),
\end{equation}
where $\wt P$ is the $\{\cF^Y_t\}$-optional projection of $P$.

\emph{Step 3 (Tightness of separated controls).} Let $(U^n,P^n,Y^n)$ be separated controls with deterministic control processes $U^n$ and let $U^n \to U$ in the stable topology, i.e., 
\begin{equation}
	\int_0^\infty g(U^n_t,t)\rd t \to 
	\int_0^\infty g(U_t,t)\rd t
\end{equation}
for all bounded measurable functions $g\colon \bU\x[0,\infty)\to\R$ with compact support which are continuous in $u$. The stable topology coincides with the vague topology, checked on continuous functions with compact support. For more details on the vague and stable topology we refer to \citet{ElKaroui1988} and \citet{jacod1981type}. We will use \citet[Theorem IX.3.9]{Jacod} to show that the laws of $(P^n,Y^n)$ are tight. Thus, we have to verify the conditions of this theorem. By the same estimates as in Step 3 of the proof of \autoref{lem:fi}, one obtains that
\begin{equation}\begin{aligned}&
\int_\bY \left(\lvert j(u,p,y,z)\rvert^2+\vert z\rvert^2\right) 
\wedge 1\ \ol K(u,p,y,\rd z)
\\&\qquad
\leq 2\int_\bY j(u,p,y,z)^2\ \ol K(u,p,y,\rd z)
+2\int_\bY \lvert z\rvert^2\wedge 1\ \ol K(u,p,y,\rd z)
\\&\qquad
\leq 2\int_\bY \big(\phi_2(u,y,z)-1\big)^2\ \ol K(u,p,y,\rd z)
+2\int_\bY \lvert z\rvert^2\wedge 1\ \ol K(u,p,y,\rd z)
\\&\qquad
\leq 4\int_\bY \left(1-\sqrt{\phi_2(u,y,z)\big(2-\phi_2(u,y,z)\big)}\right)  
\ol K(u,p,y,\rd z)
+2\int_\bY \lvert z\rvert^2\wedge 1\ \ol K(u,p,y,\rd z).
\end{aligned}\end{equation}
By \autoref{ass:bou,ass:novikov}, the integrals on the last line above are bounded by a constant which does not depend on $(u,p,y)$. By \autoref{ass:bou,ass:novikov}, also the drift and the diagonal entries of the volatility matrix
\begin{align}
\ol\beta(u,p,y),
&&
p^2(1-p)^2 \phi_1(u,y)^\top\sigma^2(u,y)\phi_1(u,y), 
&&
\sigma^2(u,y)
\end{align}
are bounded by a constant not depending on $(u,p,y)$. It follows that Condition IX.3.6 (the strong majoration hypothesis) is satisfied.  Condition IX.3.7 (the condition on the big jumps) follows from \autoref{ass:bigjumps}. By the stable convergence of $U^n$ to $U$, using \autoref{ass:con} and the bounds which were just shown, the following convergence holds for all $t\geq 0, (P,Y) \in D_{[0,1]\x\bY}[0,\infty)$, and functions $g \in C_b([0,1]\x\bY)$ vanishing near the origin:
\begin{equation}\begin{aligned}
\ol\beta(U^n,P,Y)\bullet I_t 
&\to \ol\beta(U,P,Y) \bullet I_t,
\\
P^2(1-P)^2 \phi_1(U^n,Y)^\top\sigma^2(U^n,Y)\phi_1(U^n,Y)\bullet I_t
&\to P^2(1-P)^2 \phi_1(U,Y)^\top\sigma^2(U,Y)\phi_1(U,Y)\bullet I_t
\\
P(1-P)\sigma^2(U^n,Y)\phi_1(U^n,Y)\bullet I_t
&\to P(1-P)\sigma^2(U,Y)\phi_1(U,Y) \bullet I_t
\\
\sigma^2(U^n,Y)\bullet I_t
&\to \sigma^2(U,Y) \bullet I_t,
\\
\int_\bY g\big(j(U^n,P,Y,z),z\big)\,\ol K(U^n,P,Y,\rd z) \bullet I_t	
&\to \int_\bY g\big(j(U,P,Y,z),z\big)\,\ol K(U,P,Y,\rd z) \bullet I_t.
\end{aligned}
\end{equation}
It follows from Lemma IX.3.4 that the conditions of Theorem IX.3.9 are satisfied. Thus, the laws of $(P^n,Y^n)$ are tight. Moreover, any limit $(P,Y)$ of a weakly converging subsequence of $(P^n,Y^n)$ solves the martingale problem for $(\cG,U)$ and defines a separated control $(U,P,Y)$. This follows from \citet[Theorem IX.2.11]{Jacod} by the same assumptions. 

\emph{Step 4 (Step controls).}
For any $\delta>0$, the mapping
\begin{align}
\Psi^\delta\colon L_\bU[0,\infty) \to L_\bU[0,\infty),
&&
(\Psi^\delta U)_t=\sum_{i=0}^\infty U_{i\delta} \one_{(i\delta,(i+1)\delta]}(t)
\end{align}
approximates deterministic control processes by step control processes of step size $\delta$. Indeed, $\lim_{\delta\to 0}(\Psi^\delta U)_t = U_t$ holds for each $t\geq 0$. Moreover, by dominated convergence, $\Psi^\delta U$ converges stably to $U$. Let
\begin{equation}
L^0_{\bU}[0,\infty)
=\bigcup_{\delta>0}\left\{\Psi^{\delta}U\colon U \in L_{\bU}[0,\infty)\right\}
\subset L_{\bU}[0,\infty)
\end{equation}
denote the set of all step control processes. For any step control process $U \in L^0_{\bU}[0,\infty)$, there is a control with partial observations $(U,X,Y)$ by \autoref{ass:wel} and a corresponding separated control $(U,P,Y)$ by \autoref{lem:fi}. Let $\bQ_U$ denote the law of $(P,Y)$ under $U$.  If $U^n \in L^0_{\bU}[0,\infty)$ converges stably to a step control $U \in L^0_{\bU}[0,\infty)$, then $\bQ_{U^n}$ converges weakly to $\bQ_U$ by the arguments in Step 2 and by \autoref{ass:wel2} ensuring uniqueness of the martingale problem for $(\cG,U)$. As continuity implies measurability, $\bQ$ is a transition kernel from $L^0_{\bU}[0,\infty)$ with the Borel sigma algebra of stable convergence to Skorokhod space $D_{[0,1]\x\bY}[0,\infty)$. 

\emph{Step 5 (Approximation of deterministic controls).} 
Let $U \in L_\bU[0,\infty)$ and define $U^n=\Psi^{1/n}U$, for each $n\in\bN$. By \autoref{ass:wel} there are controls with partial observations $(U^n,X^n,Y^n)$ and by \autoref{lem:fi} corresponding separated controls $(U^n,P^n,Y^n)$. By the tightness result of Step 3, any subsequence along which $J(U^n)$ converges contains another subsequence, still denoted by $n$, such that $(P^n,Y^n)$ converge weakly to some solution $(P,Y)$ of the martingale problem for $(\cG,U)$. By Skorokhod's representation theorem we may assume after passing to yet another subsequence that $(P^n,Y^n)$ and $(P,Y)$ are defined on the same probability space and that $(P^n,Y^n)$ converge to $(P,Y)$ almost surely. As $\Delta P_t = 0$ holds almost surely for each fixed $t\geq 0$, it follows from the dominated convergence theorem and the pointwise convergence of $U^n_t$ to $U_t$ that
\begin{equation}\begin{aligned}
\lim_{n\to\infty}J(U^n)
=
\int_0^\infty \rho e^{-\rho t} \lim_{n\to\infty}
\bE\left[\ol b(U^n_t,P^n_t,Y^n_t)\right]\rd t
=
\int_0^\infty \rho e^{-\rho t} 
\bE\left[\ol b(U,P_t,Y_t)\right]\rd t
=J(U).
\end{aligned}\end{equation}

\emph{Step 5 (Approximation of arbitrary controls).} 
The law of any separated control $(U,P,Y)$ is a probability measure $\bP$ on the space $L_{\bU}[0,\infty) \x D_{[0,1]\x\bY}[0,\infty)$. We will work on this canonical probability space in the sequel. Using disintegration, $\bP$ can be written in the form 
\begin{equation}\label{equ:se:disintegration}
    \bP(\rd U,\rd P,\rd Y)=\bP(\rd U)\bP_U(\rd P,\rd Y).
\end{equation}
Accordingly, the value of the control can be expressed as 
\begin{equation}
J^\se(U,P,Y) 
=
\int_{L_{\bU}[0,\infty)}
\bE_{\bP_U}\left[\int_0^\infty \rho e^{-\rho t} 
\ol b(U_t,P_t,Y_t)\rd t\right]
\bP(\rd U)
\end{equation}
For $\bP$-a.e. $U$, the process $(P,Y)$ under the measure $\bP_U$ solves the martingale problem for $(\cG,U)$. Moreover, the process $U$ is deterministic under the measure $\bP_U$. By Step 2, all solutions of the martingale problem $(\cG,U)$ with deterministic control process $U$ have the same value $J(U)$. This allows one to express the value of the control as 
\begin{equation}
J^\se(U,P,Y) = \int_{L_{\bU}[0,\infty)} J(U) \bP(\rd U).
\end{equation}
By Step 4 and dominated convergence, 
\begin{equation}
J^\se(U,P,Y) = 
\lim_{n\to\infty}
\int_{L_{\bU}[0,\infty)} J(\Psi^{1/n}U) \bP(\rd U)
=
\lim_{n\to\infty} J^\se(U^n,P^n,Y^n),
\end{equation}
where $(U^n,P^n,Y^n)$ is the coordinate process on $L_{\bU}[0,\infty)\x D_{[0,1]\x\bY}[0,\infty)$ under the measure $\bQ_{\Psi^{1/n}U}(\rd P,\rd Y)\bP(\rd U)$. Thus, $(U^n,P^n,Y^n)$ is a sequence of separated step controls approximating $(U,P,Y)$ in value. 
\end{proof}

\begin{lemma}[From separated to partially observed controls]\label{lem:se_to_po}
For every separated step control, there exists a step control with partial observations of at least the same value, implying $V^{\po,\de}(p,y) \geq V^{\se,\de}(p,y)$.
\end{lemma}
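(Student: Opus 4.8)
The plan is to exploit the Markov decision structure of the discretized separated problem. Restricted to controls piecewise constant on the grid $\{i\de\}_{i\geq 0}$, the separated problem is a time-homogeneous discounted Markov decision process: the state observed at $i\de$ is $(P_{i\de},Y_{i\de})$, a control value $u\in\bU$ chosen at $i\de$ drives the one-step transition through the martingale problem for $(\cG,u)$ on $[i\de,(i+1)\de]$ (well-posed by \autoref{ass:wel2}), and the current-value reward earned over that step is $\bE[\int_0^\de \rho e^{-\rho s}\ol b(u,P_s,Y_s)\,\rd s]$. Since $\bU$ is finite, the rewards are bounded by \autoref{ass:bou}, and the one-step transition kernel is measurable in the initial condition (by well-posedness together with the continuity in \autoref{ass:con}), the Bellman operator is a pointwise maximum over finitely many measurable functions. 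Standard dynamic-programming and measurable-selection arguments then furnish an optimal stationary Markovian policy $g\colon [0,1]\x\bY\to\bU$ attaining $V^{\se,\de}(p,y)$. Because every separated step control with initial condition $(p,y)$ has value at most $V^{\se,\de}(p,y)$, it suffices to realize $g$ as a control with partial observations of value $V^{\se,\de}(p,y)$.

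I would build this control by recursively stitching together solutions of the martingale problem for $\cA$ under constant controls. Start from $Y_0=y$ and a Bernoulli variable $X$ with $\bP(X=1)=p$, so the filter begins at $P_0=p$. Inductively, having built $(X,Y)$ and its filter $P_t=\bE[X\mid\cF^Y_t]$ on $[0,i\de]$, set the constant control $u_i=g(P_{i\de},Y_{i\de})$ and extend $(X,Y)$ to $[i\de,(i+1)\de]$ as a solution of the martingale problem for $(\cA,u_i)$ with the current value as initial condition; such a solution exists by \autoref{ass:wel}. The filter extends accordingly and, by \autoref{lem:fi}, obeys the $\cG$-dynamics under $u_i$ on that step. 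As the filter is an $\{\cF^Y_t\}$-adapted functional of the observation path (explicitly, the stochastic exponential appearing in \autoref{lem:fi}), each $u_i$ is $\cF^Y_{i\de}$-measurable, so the concatenated control $U_t=\sum_{i\geq 0}g(P_{i\de},Y_{i\de})\one_{(i\de,(i+1)\de]}(t)$ has the form $U=F(Y)$ for a c\`agl\`ad, $\{\cF^Y_t\}$-adapted process $F$ on Skorokhod space, as demanded by \autoref{def:po:co}.

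The main obstacle is to verify that the martingale problem for $(\cA,F)$ is well-posed in spite of the feedback nature of $F$. Existence is delivered by the recursive construction itself. For local uniqueness I would argue step by step: conditioning on $\cF^Y_{i\de}$, the control on $(i\de,(i+1)\de]$ is the fixed value $u_i$, so \autoref{ass:wel} yields a unique conditional law on that step, and induction propagates uniqueness across successive steps to the whole path. With well-posedness established, $(U,X,Y)$ is a control with partial observations, and \autoref{lem:fi} shows that its filter $(U,P,Y)$ is a separated control with $J^\po(U,X,Y)=J^\se(U,P,Y)$. By \autoref{ass:wel2} the law of $(P,Y)$ is the unique one generated by the policy $g$ in the Markov decision process above, so this common value equals $V^{\se,\de}(p,y)$. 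Therefore the constructed control has value at least that of any prescribed separated step control, and in particular $V^{\po,\de}(p,y)\geq V^{\se,\de}(p,y)$.
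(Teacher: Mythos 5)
Your proposal is correct and follows essentially the same route as the paper's proof: reduce to an optimal Markovian step policy for the discretized separated problem, recursively stitch together solutions of the martingale problem for $(\cA,u)$ under the policy-determined constant controls, verify well-posedness of $(\cA,F)$ via the step structure and \autoref{ass:wel}, and identify the value through \autoref{lem:fi} and the uniqueness given by \autoref{ass:wel2}. The only cosmetic differences are that the paper allows time-dependent Markov policies $F_i$ (citing standard MDP results rather than rederiving them) and is more explicit about the measurability of the conditional laws $\bQ^u(x,y)$ needed to make the stitching rigorous, a point you flag only implicitly.
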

  
\begin{proof}
\emph{Step 1 (Reduction to Markovian step controls).}
To distinguish the separated and the partially observed versions of the problem, we will mark objects of the separated problem with a tilde. By \autoref{ass:wel2}, the discretized separated problem is that of controlling the Markov chain $(\wt P_{t_i},\wt Y_{t_i})$, where $(t_i)_{i \in\bN}$ is a uniform time grid of step size $\de>0$. It is well-known that optimal Markov controls exist for such problems (see e.g. \citet{Berry,Seierstad}). We will prove the lemma by showing that every Markov control for the discretized, separated problem corresponds to a step control for the problem with partial observations which has the same value. So we start with a Markovian step control $(\wt U,\wt P,\wt Y)$ with control process $\wt U$ given by
\begin{align}
    \wt U_t = F_i(\wt P_{t_i},\wt Y_{t_i}), 
    &&
    \text{if $t\in (t_i,t_{i+1}]$,}
\end{align} 
for some functions $F_i\colon [0,1]\x \bY \to \bU$, $i\in\bN$.

\emph{Step 2 (Construction of a candidate control with partial observations).}
To construct the control for the problem with partial observations, we work on the canonical space $\Om=\bX\x D_\bY[0,\infty)$ with its natural sigma algebra and filtration. The coordinates on this space are denoted by $(X,Y)$. When $T$ is a (strict) stopping time, $\bP$ is a probability measure on $\Om$, and $\bQ$ is an $\cF_T$-measurable random variable with values in the space of probability measures on $\Om$, then we let $\bP\otimes_T \bQ$ denote the unique probability measure on $\Om$ such that (i) the law of the stopped process $(X,Y^T)$ is equal to $\bP$ on the sigma algebra $\cF_T$ and (ii) the $\cF_T$-conditional law of the time-shifted process $(X,Y_{T+t})_{t\geq0}$ is $\bQ$. This notation is explained and relevant results are proven in \citet[6.1.2, 6.1.3 and 1.2.10]{StroockVaradhan} for continuous processes. For processes with jumps, the relevant results are \citet[Lemmas III.2.43-48]{Jacod}, but the notation $\bP\otimes_T \bQ$ is not used there.

By \autoref{ass:wel}, we get for each $(u,x,y)\in \bU\x\bX\x\bY$ a unique probability measure $\bQ^u(x,y)$ on $\Om$ such that $X=x$ and $Y_0=y$ holds almost surely and such that $(X,Y)$ solve the martingale problem for $(\cA,u)$ under $\bQ^u(x,y)$. By \citet[Theorem IX.3.39]{Jacod}, $\bQ^u(x,y)$ is weakly continuous, thus measurable, in $(u,x,y)$. Verifying the conditions of the theorem can be done as in the proof of \autoref{lem:se:app}, but it is easier in the present situation. We now define inductively for each $n\in\mathbb N$ a probability measure $\bP^n$ and a c\`adl\`ag process $P^n$ on $\Om$ as follows. 
\begin{equation}\begin{aligned}
    \bP^0&=P_0\bQ^{F_0(P_0,Y_0)}(1,Y_0)+(1-P_0)\bQ^{F_0(P_0,Y_0)}(0,Y_0), 
    & P^0_t&=\bE_{\bP^0}[X\mid\cF^Y_t], 
    \\
    \bP^n&=\bP^{n-1}\otimes_{t_n}\bQ^{F_n(P^{n-1}_{t_n},Y_{t_n})}(X,Y_{t_n}), 
    & P^n_t&=\bE_{\bP^n}[X\mid\cF^Y_t].
\end{aligned}\end{equation}
It follows that the measures $\bP^n$ and $\bP^m$ agree on $\cF_{t_n \wedge t_m}$ and that the processes $P^n$ and $P^m$ agree almost surely on $[0,t_n\wedge t_m]$. Therefore, there is a unique measure $\bP$ which coincides with $\bP^n$ on $\cF_{t_n}$, for all $n$. Furthermore, there is a unique c\`adl\`ag process $P$ that is almost surely equal to $P^n$ on $[0,t_n]$, for all $n$. If $U$ is defined as
\begin{equation}\label{equ:U_as_F}
    U_t = \sum_{i=0}^\infty F_i(P_{t_i},Y_{t_i})\one_{(t_i,t_{i+1}]}(t), 
\end{equation}
then by construction, the process
\begin{equation}
	f(X,Y)-f(X,Y_0)-\cA f(U,X,Y)\bullet I
\end{equation}
is a martingale, for each $f\in\cD(\cA)$ (see also \citet[Lemma III.2.48]{Jacod}).

\emph{Step 3 (Verification of the well-posedness condition).}
As $P$ is the $\{\cF^Y_t\}$-optional projection of $X$, it is indistinguishable from $G(Y)$ for some c\`adl\`ag process $G$ on $D_\bY[0,\infty)$ by \citet{delzeith2004skorohod}. It follows from \autoref{equ:U_as_F} that $U$ is indistinguishable from $F(Y)$ for some c\`agl\`ad process $F$ on $D_\bY[0,\infty)$. The martingale problem $(\cA,F)$ is well-posed by \autoref{ass:wel} because $F$ is a step process. Thus, the well-posedness condition of \autoref{def:po:co} is satisfied. 

\emph{Step 4 (Value of the control with partial observations).}
The process $P$ defined in Step 2 is the $\{\cF^Y_t\}$-optional projection of $X$. By \autoref{lem:fi}, $(U,P,Y)$ defines a separated control of the same value as $(U,X,Y)$. \autoref{ass:wel2} implies that $(U,P,Y)$ is equal in law to $(\wt U,\wt P,\wt Y)$. Therefore, $(U,X,Y)$ has the same value as $(\wt U,\wt P,\wt Y)$. 
\end{proof}

\section{Proofs of \autoref{sec:res}.}\label{sec:app:res}

The setup of \autoref{sec:res:set}, including Assumptions \ref{ass:A}--\ref{ass:mon}, holds.

\begin{lemma}[Payoff function]\label{lem:r}
For any control $(U,X,H,R)$ of the problem with partial observations,
\begin{equation}\label{equ:payoff}
\bE\left[\int_0^\infty \rho e^{-\rho t} \rd R_t\right] 
= 
\bE\left[\int_0^\infty \rho e^{-\rho t} b(U_t,X,H_t) \rd t\right],
\end{equation}
where $b$ is given by \autoref{ass:b}.
\end{lemma}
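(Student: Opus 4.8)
The plan is to identify the predictable finite-variation part of the reward process $R$ as $\int_0^\cdot b(U_s,X,H_s)\,\rd s$ and then to show that the remaining local-martingale part contributes nothing to the discounted expected reward.

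First I would apply the martingale problem for $(\cA,F)$ to the coordinate projection $f(x,h,r)=r$. By \autoref{ass:A,ass:str}, and because $\partial_h f=0$, $\partial_r f=1$, $\partial_r^2 f=0$, while the jump kernel under $u=1$ is $\delta_0(\rd h)\,K_R(x,h,\rd r')$, a direct computation gives
\begin{equation*}
\cA f(u,x,h,r)
=\one_{u=1}\Big(\beta_R(x,h)+\int_\R\big(r'-\chi(r')\big)K_R(x,h,\rd r')\Big)
+\one_{u=0}\,k
=b(u,x,h),
\end{equation*}
which is exactly the reward function of \autoref{ass:b}. The one subtlety is that $f(x,h,r)=r$ is not an element of $\cD(\cA)=C^2_b(\bX\x\bY)$. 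I would handle this by applying the martingale problem to a sequence $f_n\in C^2_b$ agreeing with $r$ on $\{|r|\le n\}$ with uniformly bounded first and second derivatives, and passing to the limit along a localizing sequence of stopping times. The moment bound $\sup_{x,h}\int_\R(|r|^2\wedge|r|)\,K_R(x,h,\rd r)<\infty$ from \autoref{ass:str} guarantees both that $\cA f_n\to b$ in a dominated way and that $R$ is a special semimartingale; equivalently, reading off the semimartingale characteristics of $(X,Y)$ from $\cA$ (as in \citet[II.2]{Jacod}), this bound makes $R$ special with canonical decomposition
\begin{equation*}
R_t=R_0+\int_0^t b(U_s,X,H_s)\,\rd s+M_t,
\end{equation*}
where $M$ is a local martingale with $M_0=0$.

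Given this decomposition, the left-hand side of \eqref{equ:payoff} splits as
\begin{equation*}
\bE\left[\int_0^\infty\rho e^{-\rho t}\,\rd R_t\right]
=\bE\left[\int_0^\infty\rho e^{-\rho t}\,b(U_t,X,H_t)\,\rd t\right]+\bE\big[N_\infty\big],
\qquad N:=\big(\rho e^{-\rho\,\cdot}\big)\bullet M,
\end{equation*}
so everything reduces to proving $\bE[N_\infty]=0$. Here I would split $M=M_1+M_2$, where $M_1$ collects the continuous martingale part together with the compensated small jumps ($|r|\le1$) and $M_2$ the compensated large jumps ($|r|>1$). By \autoref{ass:bou} and the moment bound of \autoref{ass:str}, the predictable quadratic variation $\langle M_1\rangle$ has rate bounded by a constant $C$, so that
\begin{equation*}
\bE\big[\langle N_1\rangle_\infty\big]
\le\int_0^\infty\rho^2 e^{-2\rho s}\,C\,\rd s<\infty,
\qquad N_1:=\big(\rho e^{-\rho\,\cdot}\big)\bullet M_1,
\end{equation*}
whence $N_1$ is an $L^2$-bounded martingale with $\bE[N_1(\infty)]=0$.

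The finite-variation part $M_2$ has expected variation rate bounded by $2\sup_{x,h}\int_{|r|>1}|r|\,K_R(x,h,\rd r)<\infty$, again by \autoref{ass:str}, so that $N_2:=(\rho e^{-\rho\,\cdot})\bullet M_2$ has integrable total variation, making it a uniformly integrable martingale with $\bE[N_2(\infty)]=0$; summing yields $\bE[N_\infty]=0$ and hence \eqref{equ:payoff}. I expect the main obstacle to be the treatment of the large jumps: since $K_R$ need only have a finite first moment on $\{|r|>1\}$ (and not a finite second moment), $M$ need not be square-integrable and the naive $L^2$ argument fails. The condition $\int_\R(|r|^2\wedge|r|)\,K_R<\infty$ in \autoref{ass:str} is precisely what makes the small-jump/large-jump split work, as it simultaneously ensures that the drift correction $\int(r-\chi(r))\,K_R$ is finite, that $M_1$ is locally square-integrable, and that $M_2$ is of integrable variation. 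The localization needed to bring the unbounded projection $f(x,h,r)=r$ into the scope of the martingale problem is then routine given these bounds.
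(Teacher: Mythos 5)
Your proposal is correct and follows essentially the same route as the paper: identify the drift of the special semimartingale $R$ as $b(U,X,H)\bullet I$, then split the local-martingale remainder into a continuous-plus-compensated-small-jumps part (square-integrable, via the bounded quadratic variation rate from \autoref{ass:bou,ass:str}) and a compensated-large-jumps part (of integrable variation, hence a uniformly integrable martingale). The only difference is cosmetic: you derive the canonical decomposition by applying the generator to truncations of $f(x,h,r)=r$, whereas the paper reads it off directly from the semimartingale characteristics encoded in $\cA$, and you split jumps at $|r|=1$ where the paper uses the truncation function $\chi_R$.
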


\begin{proof}
By the integrability condition on $K_R$ in \autoref{ass:str}, the process $R$ is a special semimartingale. Its canonical decomposition is 
\begin{equation}
	R=R_0+b(U,X,H) \bullet I + R^c + r*(\mu-\nu),
\end{equation}
where $\mu$ is the integer-valued random measure associated to the jumps of $R$ and $\nu=\one_{U=1} K_R(X,H,\cdot)$ is the compensator of $\mu$. For $\zeta_t=\rho e^{-\rho t}$ one obtains that 
\begin{equation}\label{equ:Rminusb}
	\zeta\bullet R-\zeta b(U,X,H) \bullet I 
	= \zeta\bullet R^c+\zeta r*(\mu-\nu)
\end{equation} 
is a local martingale. \autoref{equ:payoff} holds if it is a true martingale. 

Let $\chi_R(r)=\chi(0,r)$. The processes $\zeta\bullet R^c$ and $\zeta\chi_R(r)*(\mu-\nu)$ are square integrable martingales by the Burkholder-Davis-Gundy inequality because their quadratic variations are integrable:
\begin{equation}\begin{aligned}
	\bE\big[[\zeta\bullet R^c]_\infty\big]
	&=
	\bE\left[\zeta^2 \one_{U=1}\sigma_R(H)^2 \bullet I_\infty\right]<\infty,
	\\
	\bE\big[[\zeta\chi_R(r)*(\mu-\nu)]_\infty\big]&=
	\bE\big[\zeta^2\chi_R(r)^2*\mu_\infty\big]
	=\bE\big[\zeta^2\chi_R(r)^2*\nu_\infty\big]<\infty.
\end{aligned}\end{equation}
This follows from the bounds on $\sigma_R$ and $K_R$ in \autoref{ass:bou,ass:str}. Furthermore, the process $(r-\chi_R(r))*(\mu-\nu)$ is a uniformly integrable martingale on $[0,t]$ because it is of integrable variation:
\begin{equation}\begin{aligned}
	\bE\big[\on{Var}(\zeta(r-\chi_R(r))*(\mu-\nu))_\infty\big]
	&\leq
	\bE\left[\zeta|r-\chi_R(r)|*\mu_\infty\right]
	+\bE\left[\zeta|r-\chi_R(r)|*\nu_\infty\right]
	\\&=
	2*\bE\left[\zeta|r-\chi_R(r)|*\nu_\infty\right]<\infty.
\end{aligned}
\end{equation}
This follows from the bound on $K_R$ in \autoref{ass:str}. Therefore, the process in \autoref{equ:Rminusb} is a martingale, and \autoref{equ:payoff} holds. 
\end{proof}

\begin{lemma}[Elimination of the state variable $r$]\label{lem:no_r}
The value functions $V(p,h,r)$, $V^\delta(p,h,r)$ do not depend on $r$ and can be written as $V(p,h)$, $V^\delta(p,h)$.
\end{lemma}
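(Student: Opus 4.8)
The plan is to exploit a translation symmetry in the reward coordinate $r$. By \autoref{ass:str} the coefficients $\beta,\sigma^2,K$ of the generator $\cA$ depend on $y=(h,r)$ only through $h$; consequently the Girsanov functions $\phi_1,\phi_2$ from \autoref{ass:gir} are independent of $r$, and therefore so are all the coefficients $\ol\beta,\ol K,j$ entering the generator $\cG$ in \autoref{ass:G}. Moreover, by \autoref{ass:b} the reward $b(u,x,h)$, hence $\ol b(u,p,h)$, does not involve $r$ at all. First I would record this as an exact invariance of $\cG$ under shifts of $r$: writing $(S_cf)(p,h,r)=f(p,h,r+c)$ for $c\in\R$ and $f\in\cD(\cG)=C^2_b([0,1]\x\bY)$ (note $S_cf\in\cD(\cG)$), the $r$-independence of the coefficients together with the fact that all jumps of $Y$ lie in the $r$-direction gives
\[
\cG(S_cf)(u,p,h,r)=(\cG f)(u,p,h,r+c)=S_c(\cG f)(u,p,h,r),
\]
so that $\cG$ commutes with the shift $S_c$.

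It is most convenient to argue in the separated problem, which is legitimate because $V=V^{\se}$ and $V^\de=V^{\se,\de}$ by \autoref{thm:equiv}. Fix $r,r'\in\R$ and put $c=r'-r$. Given a separated control $(U,P,Y)\in\fC^{\se}_{p,(h,r)}$ on a filtration $\{\cF_t\}$, with $Y=(H,R)$, I would define a new triple by leaving $U$ and $P$ unchanged and shifting the reward path, $Y'=(H,R+c)$, so that $Y'_0=(h,r')$. For each $f\in\cD(\cG)$ one has $f(P_t,Y'_t)=(S_cf)(P_t,Y_t)$, whence
\begin{align*}
&f(P_t,Y'_t)-f(P_0,Y'_0)-\int_0^t \cG f(U_s,P_s,Y'_s)\,\rd s
\\
&\qquad=(S_cf)(P_t,Y_t)-(S_cf)(P_0,Y_0)-\int_0^t \cG(S_cf)(U_s,P_s,Y_s)\,\rd s,
\end{align*}
using the commutation relation above. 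Since $(U,P,Y)$ is a separated control and $S_cf\in\cD(\cG)$, the right-hand side is an $\{\cF_t\}$-martingale; hence $(U,P,Y')$ is again a separated control, having the same initial data for $P$ and satisfying $Y'_0=(h,r')$, so $(U,P,Y')\in\fC^{\se}_{p,(h,r')}$. This map is a bijection $\fC^{\se}_{p,(h,r)}\to\fC^{\se}_{p,(h,r')}$ with inverse induced by $S_{-c}$, and it preserves the class of step controls because the shift acts only on the state space and not on time.

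Finally, because $\ol b$ is independent of $r$ and the new control has the same $U$, $P$, and $H$, I would conclude $J^{\se}(U,P,Y')=J^{\se}(U,P,Y)$. Taking the supremum across this value-preserving bijection yields $V^{\se}(p,h,r)=V^{\se}(p,h,r')$, and the identical argument restricted to step controls gives $V^{\se,\de}(p,h,r)=V^{\se,\de}(p,h,r')$, for all $r,r'$. Thus $V^{\se}$ and $V^{\se,\de}$ are independent of $r$, and by \autoref{thm:equiv} so are $V$ and $V^\de$, which can therefore be written $V(p,h)$ and $V^\de(p,h)$. I expect no serious obstacle here: the only points requiring care are verifying that the $r$-independence genuinely propagates from $\cA$ through $\phi_1,\phi_2$ to every coefficient of $\cG$, and checking that the shift map respects admissibility and the step-control structure, both of which are routine. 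The same translation argument could instead be run directly in the problem with partial observations by precomposing the defining functional $F$ with the path shift $\theta_{-c}$, at the cost of additionally transporting the well-posedness condition of \autoref{def:po:co}.
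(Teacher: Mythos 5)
Your proposal is correct and is essentially the paper's own argument: the paper proves the lemma by the same translation symmetry, defining $f_s(x,h,r)=f(x,h,r+s)$ and using $\cA f(u,x,h,r+s)=\cA f_s(u,x,h,r)$ (from \autoref{ass:str}) to show that shifting $R$ by $s$ maps controls with partial observations to controls of the same value, remarking that the identical argument works for separated controls. You simply run the same shift argument on $\cG$ in the separated problem first (with the worthwhile extra check that $r$-independence propagates through $\phi_1,\phi_2$ to the coefficients of $\cG$) and mention the $\cA$-version as the alternative, so the two proofs coincide in substance.
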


\begin{proof}
For any $s \in \R$ and $f \in \cD(\cA)$, let $f_s(x,h,r)=f(x,h,r+s)$. Then $f_s \in \cD(\cA)$, and by \autoref{ass:str}, $\cA f(u,x,h,r+s) = \cA f_s(u,x,h,r)$. If $(U,X,H,R)$ is a control with partial observations, then the equation
\begin{multline}
f(X,H,R+s)-f(X,H_0,R_0+s)-\cA f(U,X,H,R+s) \bullet I
\\=f_s(X,H,R)-f_s(X,H_0,R_0)-\cA f_s(U,X,H,R) \bullet I
\end{multline}
shows that $(U,X,H,R+s)$ is also a control with partial observations. Moreover, the two controls have the same value. The same argumentation applies to separated controls. 
\end{proof}


\begin{lemma}\label{lem:mon}
The value functions $V(p,h)$ and $V^\delta(p,h)$ are convex, non-decreasing in $p$, and non-decreasing in $h$. 
\end{lemma}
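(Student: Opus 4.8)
The plan is to establish the three properties first for the discretized value function $V^\delta$ and then transfer them to $V$ through the approximation identity $V=\sup_\delta V^\delta$ of \autoref{thm:equiv}; since convexity and monotonicity are preserved under pointwise suprema, it suffices to treat $V^\delta$. I would work on the separated problem, where—after eliminating $r$ by \autoref{lem:no_r}—the state is $(P,H)$ on the grid $t_i=i\delta$, and use value iteration, writing $V^\delta=\lim_{N}T^N 0$ for the one-step Bellman operator $T$ built from the two constant controls $u\in\{0,1\}$. Because the class of functions that are convex in $p$ and non-decreasing in $p$ and in $h$ is closed under uniform limits, the whole statement reduces to showing that $T$ maps this class into itself.

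Convexity in $p$ is cleanest to obtain directly rather than through $T$. Using the partial-observations formulation and decomposing the prior as $\bP=p\bP_1+(1-p)\bP_0$, every admissible feedback $F$ (a function of the observations alone, with the same admissible set for all $p$ by \autoref{ass:wel}) yields a value $J^{\po}(F)=pJ_1(F;h)+(1-p)J_0(F;h)$, where $J_x(F;h)$ is the reward earned under the fixed conditional law of $Y$ given $X=x$. Hence $V^{\po,\delta}(p,h)=\sup_F\{pJ_1(F;h)+(1-p)J_0(F;h)\}$ is a supremum of functions affine in $p$, and is therefore convex; the identity $V^\delta=V^{\po,\delta}$ from \autoref{thm:equiv} transfers convexity to $V^\delta$.

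For monotonicity in $p$ I would show that $T$ preserves it. The running reward is $\ol b(0,p,h)=k$ and $\ol b(1,p,h)=pb(1,1,h)+(1-p)b(1,0,h)$, which is affine and, since $b(1,1,h)\ge b(1,0,h)$ by \autoref{ass:mon}, non-decreasing in $p$; within a single step the control is constant, so $H$ follows a deterministic path $\phi^1_t(h)$, and the martingale property of $P$ together with affineness gives a discounted running term $\int_0^\delta\rho e^{-\rho t}\ol b(1,p,\phi^1_t(h))\,\rd t$ that is non-decreasing in $p$. For the continuation term I would invoke the change-of-measure representation of the filter established in the proof of \autoref{lem:fi}: with the one-step likelihood ratio $\ell\ge 0$ satisfying $\bE_{\bP_0}[\ell]=1$ under the type-$0$ reference law, one has $P_\delta=p\ell/(p\ell+1-p)$ and $\bE^{(p)}[\psi(P_\delta)]=\bE_{\bP_0}[(p\ell+1-p)\psi(P_\delta)]$. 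For non-decreasing $\psi$ this is non-decreasing in $p$, since both $\ell\mapsto\ell$ and $\ell\mapsto\psi(P_\delta)$ are comonotone (making $\on{Cov}_{\bP_0}(\ell,\psi(P_\delta))\ge 0$) and the remaining contribution is manifestly non-negative. This is exactly the statement that the Bayesian posterior is stochastically increasing in the prior, and applied with $\psi=W(\cdot,\Phi^1_\delta(h))$ it shows $T$ preserves monotonicity in $p$.

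The main obstacle is monotonicity in $h$, because raising $h$ changes not only the reward level but also the law of the filter, through the $h$-dependence of $\sigma_R$, $\beta_R$ and $K_R$ in $\cG$. Under the safe arm this is harmless: $P$ is frozen and $H$ moves to the order-preserving flow $\Phi^0_\delta(h)$ of $\rd H=\beta_H(0,H)\rd t$, so the update $W(p,\Phi^0_\delta(h))$ is non-decreasing in $h$. Under the risky arm the direct reward $\int_0^\delta\rho e^{-\rho t}\ol b(1,p,\phi^1_t(h))\,\rd t$ is non-decreasing in $h$ because $\ol b(1,p,\cdot)$ is non-decreasing by \autoref{ass:mon} and the flow $h\mapsto\phi^1_t(h)$ is monotone, and the continuation argument $\Phi^1_\delta(h)$ is non-decreasing as well. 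The delicate point is the residual dependence of $\bE^{(p),h}[W(P_\delta,\Phi^1_\delta(h))]$ on $h$ through the distribution of $P_\delta$: discarding the benefit of the shifted argument $\Phi^1_\delta(h)$ and using only $W$'s monotonicity in $h$ would leave a convex-order comparison of the two posterior laws, which need not hold because the signal's informativeness is not assumed monotone in $h$. The benefit of the upward shift $\Phi^1_\delta$ must therefore be retained and weighed against the change in informativeness, and controlling this combined effect—showing that extra human capital cannot lower the continuation value—is where \autoref{ass:mon} (the monotone coupling of rewards and human-capital dynamics) does its real work and is the crux of the argument; I would attack it via a comparison/coupling for the filtering equations, keeping convexity and monotonicity of $W$ as a joint inductive hypothesis. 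Once $T$ is shown to preserve the class, the properties for $V^\delta$ and then for $V=\sup_\delta V^\delta$ follow routinely.
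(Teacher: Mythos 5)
Your treatments of convexity and of monotonicity in $p$ are sound (the mixture-over-types argument for convexity and the stochastic-dominance argument for the posterior are both standard and valid), but the proof has a genuine gap exactly where you flag it: monotonicity in $h$. Within your Bellman-operator framework, the one-step continuation value under the risky arm, $\bE^{(p),h}\big[W(P_\delta,\Phi^1_\delta(h))\big]$, depends on $h$ through the law of the posterior $P_\delta$, and since the informativeness of the signal ($\beta_R$, $\sigma_R$, $K_R$ all depend on $h$) is not assumed monotone in $h$, no coupling or convex-order comparison is available from the stated assumptions to close this step. Announcing that you ``would attack it via a comparison/coupling for the filtering equations'' is not a proof; \autoref{ass:mon} says nothing about how informativeness varies with $h$, so any such argument would have to contend with the possibility that higher $h$ yields strictly less informative signals, and it is not clear your induction survives this. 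As written, the dynamic-programming route does not go through.

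The paper avoids this obstacle entirely by a different decomposition, which is worth internalizing: by the disintegration in Step 5 of the proof of \autoref{lem:se:app}, the value of \emph{any} separated control is a mixture $\int J(U)\,\bP(\rd U)$ of values of \emph{deterministic} (open-loop) control processes $U$. For deterministic $U$ the process $H$ is deterministic, $P$ is a martingale, and $\ol b(u,p,h)$ is affine in $p$, so the filter integrates out completely:
\begin{equation*}
J(U)=P_0 \int_0^\infty \rho e^{-\rho t}\, b(U_t,1,H_t)\,\rd t
+(1-P_0)\int_0^\infty \rho e^{-\rho t}\, b(U_t,0,H_t)\,\rd t .
\end{equation*}
This expression involves no law of the posterior at all; for every $U$ and every $\delta$ it is affine in $P_0$ with non-negative slope (since $b(u,1,h)\geq b(u,0,h)$ by \autoref{ass:mon}) and non-decreasing in $H_0$ (monotone ODE flow plus \autoref{ass:mon}). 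Taking suprema over controls or step controls then yields all three properties of $V$ and $V^\delta$ simultaneously. The point your approach misses is that adaptivity lives entirely in the mixing measure $\bP(\rd U)$, so the $h$-dependence of the signal's informativeness --- the very thing that blocks your induction --- never enters the functions whose supremum defines the value function.
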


\begin{proof}
Recall from Step 5 in the proof of \autoref{lem:se:app} that the value of any separated control can be written as
\begin{equation}
J^\se(U,P,H,R) = \int_{L_{\bU}[0,\infty)} J(U) \bP(\rd U),
\end{equation}
where $\bP(\rd U)$ is the marginal distribution of $U \in L_\bU[0,\infty)$ and $J(U)$ is the value of a deterministic control process $U$. In the definition of $J(U)$ in \autoref{equ:j_of_u}, $P$ is a martingale and $(U,H)$ are deterministic. Therefore, 
\begin{multline}
J(U)=\bE\left[\int_0^\infty \rho e^{-\rho t} \ol b(U_t,P_t,H_t)\rd t\right]
\\
=P_0 \int_0^\infty \rho e^{-\rho t} b(U_t,1,H_t)\rd t
+(1-P_0) \int_0^\infty \rho e^{-\rho t} b(U_t,0,H_t)\rd t.
\end{multline}
This expression is linear in $P_0$ and non-decreasing in $(P_0,H_0)$ by \autoref{ass:mon}. Taking the supremum over all controls or step controls with fixed initial condition $(P_0,H_0)$, one obtains convexity in $P_0$ and monotonicity in $(P_0,H_0)$. 
\end{proof}

\begin{lemma}[Sufficient condition for optimality of the risky arm]\label{lem:ini}
In the discretized separated problem, the risky arm is uniquely optimal as an initial choice if its expected first-stage payoff exceeds the first-stage payoff of the safe arm. 
\end{lemma}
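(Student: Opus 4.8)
The plan is to use the dynamic programming principle for the discretized separated problem, which is a Markov decision problem in the state $(P_{t_i}, H_{t_i})$ (as noted in the proof of \autoref{lem:se_to_po}). Writing $\delta$ for the end of the first stage, the value of an initial choice $u \in \{0,1\}$ decomposes into the expected first-stage payoff plus the discounted continuation value,
\begin{equation*}
r_u(p,h) + e^{-\rho\delta}\, \bE\big[V^\delta(P_\delta, H_\delta) \,\big|\, U_0 = u,\ (P_0,H_0)=(p,h)\big],
\end{equation*}
where $r_u(p,h) = \bE\big[\int_0^\delta \rho e^{-\rho t}\ol b(u, P_t, H_t)\,\rd t\big]$. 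The hypothesis of the lemma is precisely that $r_1(p,h) > r_0(p,h)$, so it suffices to show that the continuation value under the risky arm is at least as large as under the safe arm; adding a strict and a weak inequality then yields strict optimality of $u=1$.

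The key is to compare the two continuation states. First I would record the behaviour of the state under a constant control over $(0,\delta]$. By \autoref{ass:str} the increments of $H$ are deterministic, so $H_\delta$ is a deterministic function of $h$ and the control; by the monotonicity condition \autoref{ass:mon} ($\be_H(0,h)\le 0 \le \be_H(1,h)$) the risky arm gives $H^1_\delta \ge h$ while the safe arm gives $H^0_\delta \le h$. Meanwhile $P$ is the filter $\bE[X\mid\cF^Y_t]$, hence a martingale with $\bE[P_\delta]=p$; under the safe arm no information arrives (by \autoref{ass:str}, $\sigma^2(0,\cdot)=0$ and $K(0,\cdot)=0$), so $P_\delta = p$ deterministically.

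Now I would chain the properties of $V^\delta$ from \autoref{lem:mon}. Using convexity of $V^\delta$ in $p$ together with Jensen's inequality (conditioning on the deterministic value $H^1_\delta$), and then monotonicity of $V^\delta$ in $h$ together with $H^1_\delta \ge h \ge H^0_\delta$, one obtains
\begin{equation*}
\bE\big[V^\delta(P_\delta, H^1_\delta)\big]
\ge V^\delta\big(\bE[P_\delta], H^1_\delta\big)
= V^\delta(p, H^1_\delta)
\ge V^\delta(p, H^0_\delta)
= \bE\big[V^\delta(P_\delta, H^0_\delta)\big],
\end{equation*}
where the final equality uses that $P_\delta = p$ is deterministic under the safe arm. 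This establishes dominance of the risky-arm continuation value, which combined with $r_1 > r_0$ shows that $u=1$ is uniquely optimal as an initial choice.

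The main obstacle is conceptual bookkeeping rather than hard analysis. One must verify that the first stage genuinely decouples into the deterministic ($H$), martingale ($P$), and monotone/convex ($V^\delta$) ingredients described above, and that the Jensen step is legitimate---which it is precisely because \autoref{ass:str} renders $H_\delta$ deterministic, so that convexity of $V^\delta$ is applied in the single variable $p$ at a fixed second argument. The only substantive inputs are the martingale property of the filter, the sign of $\be_H$ from \autoref{ass:mon}, and the convexity and monotonicity of $V^\delta$ from \autoref{lem:mon}; everything else is the standard one-step Bellman comparison.
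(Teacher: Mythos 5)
Your proposal is correct and follows essentially the same route as the paper's own proof: the one-step Bellman comparison, the deterministic evolution of $H$ with $h_0 \leq h \leq h_1$ from \autoref{ass:str,ass:mon}, the constancy of the filter under the safe arm, and the Jensen-plus-monotonicity chain using the convexity and monotonicity of $V^\delta$ from \autoref{lem:mon}. The only difference is cosmetic---you apply convexity before monotonicity in $h$, whereas the paper orders the two inequalities the other way---so nothing further is needed.
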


\begin{proof}
We fix $\delta>0$ and only allow control processes which are piecewise constant on the uniform time grid of step size $\delta$. The expected first-stage payoff is denoted by
\begin{equation}\label{equ:b_bar_delta}
\ol b^\delta (u,p,h) = 
\bE\left[\int_0^\delta \rho e^{-\rho t} 
\ol b(u,P_t,H_t) \rd t\right],
\end{equation}  
where $(P,H)$ stems from a separated control with initial condition $(P_0,H_0)=(p,h)$ and constant control process $U_t\equiv u$. By Bellman's principle, optimal initial choices $U_0$ for the discretized separated problem are maximizers of 
\begin{equation}\label{equ:bellman}
\max_{u \in \bU}\ \ol b^\delta(u,p,h)
+ e^{-\rho \de } \bE\left[V^\delta(P_\delta,H_\delta)\ \middle|\ (U_0,P_0,H_0)=(u,p,h)\right].
\end{equation}
Thus, the optimal initial choice depends on the sign of the quantity
\begin{multline}\label{equ:delta}
\ol b^\delta(1,p,h) - \ol b^\delta(0,p,h) + e^{-\rho \de } 
\bE\left[V^\delta(P_\delta,H_\delta)\ \middle|\ 
(U_0,P_0,H_0)=(1,p,h)\right]
\\
- e^{-\rho \de } 
\bE\left[V^\delta(P_\delta,H_\delta)\ \middle|\ 
(U_0,P_0,H_0)=(0,p,h)\right],
\end{multline}
which is the advantage of the risky arm over the safe arm. For each $u \in \bU$, let $h_u$ be the deterministic value which $H_\delta$ attains after an initial choice of $u$. By \autoref{ass:mon}, the inequality $h_0\leq h \leq h_1$ holds. Furthermore, $P_\delta=P_0$ holds under an initial choice $u=0$. By the monotonicity and convexity result of \autoref{lem:mon},
\begin{equation}\begin{aligned}
&\bE\left[V^\de(P_\de,H_\de)\ \middle|\ (U_0,P_0,H_0)=(1,p,h)\right]
-\bE\left[V^\de(P_\de,H_\de)\ \middle|\ (U_0,P_0,H_0)=(0,p,h)\right] 
\\&\qquad=
\bE\left[V^\de(P_\de,h_1)\ \middle|\ (U_0,P_0,H_0)=(1,p,h)\right]
- V^\delta(p,h_0) 
\\&\qquad\geq
\bE\left[V^\de(P_\de,h_1)\ \middle|\ (U_0,P_0,H_0)=(1,p,h)\right]
- V^\delta(p,h_1) \geq 0.
\end{aligned}\end{equation}
It follows that \eqref{equ:delta} is strictly positive if $\ol b^\delta(1,p,h) > \ol b^\delta(0,p,h)$. In this case, the initial choice of the risky arm is uniquely optimal. 
\end{proof}

\begin{lemma}[Optimality of stopping rules]\label{lem:os}
For each $\delta>0$, $V^\delta(p,h)$ is a supremum over values of stopping rules.
\end{lemma}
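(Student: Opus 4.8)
The plan is to treat the discretized separated problem as a stationary Markov decision problem in the state $(p,h)$ and to show that its optimal stationary policy (which exists by the construction underlying \autoref{lem:se_to_po}) can be taken to be a stopping rule of the form $\one_{\llbracket 0,\tau\rrbracket}$. First I would record two elementary facts. Since $b(0,\cdot,\cdot)\equiv k$ by \autoref{ass:b}, the one-stage payoff of the safe arm is the constant $\ol b^\de(0,p,h)=k(1-e^{-\rho\de})$, independent of $(p,h)$, and playing the safe arm forever yields total value exactly $k$; hence $V^\de(p,h)\ge k$ everywhere. Moreover, stopping rules form a subclass of admissible controls, so each has value at most $V^\de(p,h)$; the content of the lemma is therefore the reverse inequality, namely that the optimum is attained by a stopping rule.

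The key step, and the place where \autoref{ass:mon} does the work, is the claim that \emph{if the safe arm is an optimal initial choice at $(p,h)$, then $V^\de(p,h)=k$}. To see this, let $h_0\le h$ be the human-capital level reached after one safe step (the inequality is $\be_H(0,\cdot)\le 0$ from \autoref{ass:mon}, as in \autoref{lem:ini}), and recall that the belief is unchanged under the safe arm. Optimality of the safe arm gives the Bellman identity $V^\de(p,h)=\ol b^\de(0,p,h)+e^{-\rho\de}V^\de(p,h_0)$. Because $V^\de$ is non-decreasing in $h$ by \autoref{lem:mon} and $h_0\le h$, we have $V^\de(p,h_0)\le V^\de(p,h)$, so
\[
V^\de(p,h)\le k(1-e^{-\rho\de})+e^{-\rho\de}V^\de(p,h),
\]
which rearranges to $V^\de(p,h)(1-e^{-\rho\de})\le k(1-e^{-\rho\de})$, i.e.\ $V^\de(p,h)\le k$. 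Combined with $V^\de\ge k$ this forces $V^\de(p,h)=k$, and then the same identity yields $V^\de(p,h_0)=k$ as well. In other words, the region $\{V^\de=k\}$ on which the safe arm is optimal is absorbing under the safe dynamics.

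With this in hand I would conclude by transforming an optimal policy. Fix an optimal stationary Markov policy $U^*$. If $U^*$ never selects the safe arm it is already a stopping rule (take $\tau=\infty$). Otherwise let $\tau$ be the first grid time at which $U^*$ selects the safe arm, so that $U^*$ plays the risky arm on $[0,\tau)$; $\tau$ is a stopping time, and since $U^*$ is optimal the safe arm is an optimal action at $(P_\tau,H_\tau)$, whence $V^\de(P_\tau,H_\tau)=k$ by the key step. By the dynamic programming principle the continuation value of $U^*$ from $\tau$ equals $V^\de(P_\tau,H_\tau)=k$, which is exactly the value delivered by playing the safe arm forever after $\tau$. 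Replacing $U^*$ after $\tau$ by the safe arm therefore changes nothing in value and produces the stopping rule $U^*_t=\one_{\llbracket 0,\tau\rrbracket}(t)$. Thus $V^\de(p,h)$ is attained by a stopping rule, and together with the trivial inequality it equals the supremum of values of stopping rules. If one instead fixes the greedy policy that selects the safe arm whenever it is optimal, the absorbing property makes it directly a stopping rule with $\tau=\inf\{t:V^\de(P_t,H_t)\le k\}$, matching the representation in \autoref{thm:main}.

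I expect the main obstacle to be the key step of the second paragraph, and specifically making precise that depreciation of human capital under the safe arm ($h_0\le h$) together with monotonicity of $V^\de$ in $h$ closes the contraction $V^\de(p,h)\le k$. This is where the restless structure helps rather than hinders: were the safe arm non-depreciating one could not close the loop this way and would instead have to run the full adjacent-stage interchange of \citet[Theorem~5.2.2]{Berry}, comparing the two orderings of a safe and a risky step and invoking the myopic comparison of \autoref{lem:ini} together with monotonicity of the one-stage risky payoff and of the flow $h\mapsto h_1$ in $h$. A secondary point to handle with care is the appeal to the dynamic programming principle identifying the continuation value of $U^*$ with $V^\de$ at the reached state, which is standard for the bounded-reward discounted problem (\autoref{ass:bou}).
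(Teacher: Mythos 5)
Your proof is correct, but it takes a genuinely different route from the paper's. The paper proves \autoref{lem:os} by truncating to a finite horizon, inducting on the horizon, and running Berry's adjacent-interchange argument: an optimal control that starts with the safe arm and plays risky on stages $1\le i\le T$ is compared with its time-shift $U^*$, and the inequality $J^*\ge J$ is closed by showing that $b_i=\bE\big[\one_{i\le T}\big(b^\de(1,X,H_i)-k^\de\big)\big]$ is non-decreasing --- this is where \autoref{lem:ini} (the myopic comparison at the stopping stage) and the growth of $H$ under the risky arm enter --- together with the regular-discount summation lemma of \citet[Equation (5.2.8)]{Berry}. You instead prove an absorption statement at the level of the Bellman equation: if the safe arm attains the max at $(p,h)$, then $V^\de(p,h)=k(1-e^{-\rho\de})+e^{-\rho\de}V^\de(p,h_0)\le k(1-e^{-\rho\de})+e^{-\rho\de}V^\de(p,h)$ by depreciation $h_0\le h$ (\autoref{ass:mon}) and monotonicity of $V^\de$ in $h$ (\autoref{lem:mon}), forcing $V^\de(p,h)=k$; truncating an optimal policy at its first safe action then costs nothing in value. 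This is shorter, bypasses \autoref{lem:ini} and the finite-horizon induction entirely, and yields as a by-product the sharp identity $\{\text{safe optimal}\}=\{V^\de=k\}$, anticipating the representation $T^*=\inf\{t\colon V(P_t,H_t)\le k\}$ in \autoref{thm:main,lem:os:exi}. The price is heavier reliance on the infinite-horizon dynamic programming principle and on exact attainment by an optimal policy for the infinite-horizon discretized problem, whereas the paper needs only finite-horizon optimal Markov controls, whose existence by backward induction is more elementary; since the action set is finite (so the Bellman max is attained pointwise) and the paper itself invokes Bellman's principle in \autoref{lem:ini} and existence of optimal Markov controls in \autoref{lem:se_to_po} citing \citet{Berry,Seierstad}, you are on the same footing, though you should note explicitly that your Bellman identity also uses that the belief is frozen under the safe arm, which holds because the safe arm is uninformative by \autoref{ass:str}. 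One side comment in your last paragraph is backwards: for a \emph{frozen} (non-depreciating, classical) safe arm one has $h_0=h$ and your loop closes immediately, without \autoref{lem:mon}; what would actually break the argument is an \emph{appreciating} safe arm, $\be_H(0,\cdot)>0$, which \autoref{ass:mon} excludes.
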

  
\begin{proof}
\emph{Step 1 (Discrete setting).}
We fix $\delta>0$ and work on the uniform time grid $t_i=i\delta$, $i\in\bN$. The one-stage payoff of the problem with partial observations is given by
\begin{equation}
b^\delta(u,x,h) = \int_0^\delta \rho e^{-\rho t} b(u,x,H_t) \rd t,
\end{equation}
where $H$ stems from a control with partial observations with constant control process $U_t\equiv u$ and initial condition $(X,H_0)=(x,h)$. The one-stage payoff of the safe arm is
\begin{equation}
k^\delta = b^\delta(0,x,h)=\int_0^\delta \rho e^{-\rho t} k \rd t. 
\end{equation}
By abuse of notation, we identify indices $i\in\bN$ with times $t_i$, writing $U_i$ for the value of $U$ on $(t_i,t_{i+1}]$ and $(P_i,H_i,R_i)$ for the value of $(P,H,R)$ at $t_i$. 

\emph{Step 2 (Finite horizon).}
We truncate the problem with partial observations to a finite time horizon $n$. In the truncated problem, the value of a control $(U,X,H,R)$ is given by
\begin{equation}
J^\po(U,X,H,R) = \bE\left[\sum_{i=0}^n e^{-\rho \delta i}
b^\delta (U_i,X,H_i)\right]. 
\end{equation}
We will show by induction on $n$ that there exists an optimal stopping rule, i.e., a control that never switches from safe to the risky arm. For $n=0$, there is nothing to prove. Now let $(U,X,H,R)$ be an optimal control for the problem with horizon $n+1$ constructed via \autoref{lem:se_to_po} from an optimal Markovian control for the truncated separated problem. As $H$ evolves deterministically given $U$, it is possible to write $U=F(R)$ for a piecewise constant process $F$ on the path space $D_\R[0,\infty)$.\footnote{This is easily seen for $U_0$, which is deterministic. For $U_{i+1}$, it follows by induction because $H_{i+1}$ is a deterministic function of $U_i$.} The inductive hypothesis allows one to assume that for $i\geq 1$,   $U_i$ never switches from the safe to the risky arm. If $U_0$ indicates the risky arm, the proof is complete. Otherwise, $U$ has the form
\begin{equation}\label{equ:U_as_T}
U_i=\left\{\begin{aligned}
&0, && \text{if } i=0 \text{ or } i>T,
\\
&1, && \text{if } 1\leq i \leq T, 	
\end{aligned}\right.
\end{equation}
for some stopping time $T$. Given that the safe arm is chosen initially, the reward process $R$ is deterministic during the first stage. Therefore, there is a modification of $F$ that does not depend on the path of $R$ on the interval $[0,\delta]$. This makes it possible to define an adapted process $F^*$ which skips the first action of $F$. Then $F$ is a stopping rule. Formally, $F^*$ can be defined as 
\begin{equation}
F^*(R)=\cS^\delta F\big(\cS^{-\delta}R\big),
\end{equation}
where for any process $Z$, $(\cS^\delta Z)_t = Z_{(t+\de)\vee 0}$ is a shift of $Z$ by $\delta$. As the martingale problem $(\cA,F^*)$ is  well-posed, there is a corresponding control $(U^*,X^*,H^*,R^*)$ with $U^*=F^*(R^*)$ and initial condition $\bE[X^*]=\bE[X], H^*_0=H_0$. For comparison, we also define $(U^0,X^0,H^0,R^0)$ as the control where the risky arm $U^0\equiv 0$ is chosen all the time, still with the same initial condition $\bE[X^0]=\bE[X], H^0_0=H_0$. The values of the controls are denoted by $J,J^*$, and $J^0$, respectively. Then
\begin{align}
J^*-J^0 &=
\bE\left(\sum_{i=0}^{T-1} e^{-\rho\delta i} 
\big(b^\delta(1,X,H^*_i)-k^\delta\big)\right) 
\geq
\bE\left(\sum_{i=1}^T e^{-\rho\delta (i-1)} 
\big(b^\delta(1,X,H_i)-k^\delta\big) \right),
\\ \label{equ:step6}
J-J^0 &=
\bE \left(\sum_{i=1}^T e^{-\rho\delta i} 
\big(b^\delta(1,X,H_i)-k^\delta\big) \right) 
\geq 0.
\end{align}
The first inequality holds because choosing the safe arm decreases $H$, see \autoref{ass:mon}. The second inequality holds because the value $J$ of the optimal control is at least as high as $J^0$. Thus, 
\begin{multline}
J^*-J
\geq 
\bE\left[\sum_{i=1}^T 
\left(e^{-\rho\delta (i-1)} -e^{-\rho\delta i} \right)
\big(b^\delta(1,X,H_i)-k^\delta\big)\right]
\\
=\sum_{i=1}^\infty 
\left(e^{-\rho\delta (i-1)} -e^{-\rho\delta i} \right)
\underbrace{\bE\Big[\one_{i\leq T} 
\big(b^\delta(1,X,H_i)-k^\delta\big)\Big]}_{=:b_i}.
\end{multline}
The increments of $(b_i)_{i\in\bN}$ are given by
\begin{equation}
b_{i+1}-b_i
=
\bE\Big[\one_{i\leq T} 
\big(b^\delta(1,X,H_{i+1})-b^\delta(1,X,H_i)\big)\Big]
+\bE\Big[\one_{i=T} \big(k^\delta-b^\delta(1,X,H_{i+1})\big)\Big].
\end{equation} 
The first summand on the right-hand side is non-negative for $i\geq 1$ because $H$ increases while the risky arm is played. By the $\cF^{R}_{i+1}$-measurability of $\one_{i=T}$ and $H_{i+1}$, the second summand can be written as
\begin{equation}
\bE\Big[ \one_{i=T} \big(k^\delta-b^\delta(1,X,H_{i+1})\big)\Big]
=\bE\Big[\one_{i=T} \big(k^\delta-\ol b^\delta(1,P_{i+1},H_{i+1})\big)\Big]
=\bE\Big[\one_{i=T} \big(k^\delta-\ol b^\delta(1,P_{T+1},H_{T+1})\big)\Big],
\end{equation}
where $\ol b^\delta(u,p,h)$ is defined in \autoref{equ:b_bar_delta}. As it is optimal under $U$ (see \autoref{equ:U_as_T}) to choose the safe arm at stage $T+1$, the inequality $k^\delta\geq\ol b^\delta(1,P_{T+1},H_{T+1})$ holds by \autoref{lem:ini}. This proves $b_{i+1}\geq b_i$, for all $i\geq 1$.
By \autoref{equ:step6}, we also have 
\begin{equation}\label{equ:step6.2}
\sum_{i=1}^\infty e^{-\rho\delta i} b_i \geq 0.
\end{equation}
By \citet[Equation (5.2.8)]{Berry} this implies
\begin{equation}
J^*-J=\sum_{i=1}^\infty 
\left(e^{-\rho\delta (i-1)}-e^{-\rho\delta i}\right) b_i \geq 0,
\end{equation}
since truncated geometric discount sequences are regular. Thus we have constructed an optimal stopping rule $(U^*,X^*,H^*,R^*)$ for the truncated problem with horizon $n+1$.

\emph{Step 2 (Infinite horizon).}
We have shown that stopping rules are optimal for each discretized problem with finite horizon $n$. It follows by approximation that the value function $V^\delta(p,h)$ of the discretized problem with infinite horizon is a supremum over stopping rules. The argument can be found in the proof of \citet[Theorem 5.2.2]{Berry}. 
\end{proof}

\begin{lemma}[Description of optimal stopping rules]\label{lem:os:exi}
The stopping time $T^* = \inf \{t: V(P_t,H_t) \leq k \}$ is optimal for the separated problem.
\end{lemma}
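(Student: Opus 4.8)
The plan is to read the statement as the \emph{verification} step of a standard optimal stopping problem and to close it with a Snell-envelope martingale argument. First I would record the reformulation made available by the preceding lemmas. By \autoref{lem:os} and \autoref{thm:equiv} the value function is the value of the stopping problem
\[
V(p,h)=\sup_T J(T),\qquad
J(T)=\bE_{(p,h)}\left[\int_0^T \rho e^{-\rho t}\,\ol b(1,P_t,H_t)\,\rd t+k\,e^{-\rho T}\right],
\]
where $(P,H)$ is driven by the constant control $U\equiv 1$ as in \autoref{def:git}, the supremum runs over $\{\cF^{P,H}_t\}$-stopping times, and I have used $\ol b(0,\cdot)\equiv k$ for the safe arm. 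Taking $T=0$ shows $V\geq k$ everywhere, so $\{V\leq k\}=\{V=k\}$ and $T^*$ is exactly the first entry of $(P,H)$ into the stopping region $\{V=k\}$; under \autoref{ass:wel2} the martingale problem for $(\cG,1)$ is well-posed, so $(P,H)$ is strong Markov and $T^*$ is a genuine stopping time.

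Next I would introduce the discounted gain process
\[
M_t=\int_0^t \rho e^{-\rho s}\,\ol b(1,P_s,H_s)\,\rd s+e^{-\rho t}\,V(P_t,H_t),
\]
so that $M_0=V(p,h)$ and, for every stopping time $T$, $\bE[M_T]\geq J(T)$ with equality whenever $V(P_T,H_T)=k$ (using $V\geq k$). The heart of the argument is the dynamic programming principle for $V$: for stopping times $\tau\leq\sigma$ one has $\bE[M_\sigma\mid\cF_\tau]\leq M_\tau$, with equality while $(P,H)$ stays in the continuation region $\{V>k\}$. This is the same Bellman identity already exploited in \autoref{lem:ini}, now applied to the Markov family generated by $(\cG,1)$; it makes $M$ a supermartingale and $M^{T^*}$ a martingale. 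Boundedness of $b$ (\autoref{ass:bou}) together with the factor $e^{-\rho t}\to 0$ bounds $M$ and supplies the uniform integrability needed to apply optional sampling at arbitrary, possibly infinite, stopping times.

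The conclusion is then immediate. For any stopping time $T$, optional sampling and $V\geq k$ give $V(p,h)=M_0\geq\bE[M_T]\geq J(T)$, recovering $V=\sup_T J(T)$ and confirming that no stopping rule beats $V$. For $T^*$ the martingale property of $M^{T^*}$ gives $V(p,h)=\bE[M_{T^*}]$; on $\{T^*<\infty\}$ one has $V(P_{T^*},H_{T^*})=k$ because $(P_{T^*},H_{T^*})\in\{V=k\}$, and on $\{T^*=\infty\}$ the term $e^{-\rho t}V(P_t,H_t)\to 0$ so only the running reward survives. In both cases $\bE[M_{T^*}]=J(T^*)$, whence $J(T^*)=V(p,h)$ and $T^*$ is optimal for the separated problem.

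I expect the main obstacle to be the rigorous justification of the martingale characterization, in particular two regularity points. One is the identification of the continuation region with $\{V>k\}$ and the equality in the dynamic programming principle there, which underlies the martingale property of $M^{T^*}$. The other is guaranteeing $V(P_{T^*},H_{T^*})=k$ at the entry time: \autoref{lem:mon} gives only that $V$ is non-decreasing and convex in $p$, hence continuous in the interior $p\in(0,1)$ but potentially discontinuous in $h$ or at the belief boundary, while $(P,H)$ may jump across the frontier. I would discharge both points either by proving lower semicontinuity of $V$ as a supremum of values of continuous functionals (so that $\{V\leq k\}$ is closed and the strong Markov property places $(P_{T^*},H_{T^*})$ inside it), or, more economically, by invoking the general optimal-stopping results already used in the proof of \autoref{thm:main}, namely \citet[Chapter~IV]{Peskir}, \citet[Theorem~2.1]{Morimoto}, and \citet[Proposition~3.4]{ElKaroui1994}, under which the first entry time into the stopping region is optimal.
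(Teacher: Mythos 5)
Your reformulation of the statement as a verification problem is correct, and your fallback option is in fact the paper's actual route: the paper does not attempt a dynamic programming / Snell-envelope verification at all, but reduces to the canonical Feller setting and invokes \citet[Corollary~2.9]{Peskir}. However, your primary argument has a genuine gap, and your fallback is precisely where the real work lies, left unexecuted. The gap in the main route: the supermartingale property of $M$ and, especially, the martingale property of $M^{T^*}$ (equivalently, the dynamic programming principle with \emph{equality} on the continuation region $\{V>k\}$) is essentially the content of the lemma itself, and nothing in the paper supplies it. Citing \autoref{lem:ini} does not help: that lemma is a one-step Bellman comparison for the \emph{discretized} problem, used to compare the two arms' first-stage payoffs; it does not yield a continuous-time DPP for the stopping problem, and establishing such a DPP from scratch (measurable selection, regularity of $V$, behavior under jumps of $(P,H)$) is not easier than the lemma you are proving. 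So as written, the martingale step is circular.

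Your "more economical" fallback is the right idea but is stated as a one-line invocation, whereas the cited theorems have hypotheses that require a nontrivial reduction --- and this reduction is exactly the paper's proof. Concretely, the paper: (i) verifies that $(P,H)$ under $U\equiv 1$ is a Feller family (via \autoref{ass:wel2} and \citet[Theorem~IX.4.39]{Jacod}, with estimates as in \autoref{lem:se:app}); (ii) kills the process at rate $\rho$ and appends the additive functional $A_t=A_0+\int_0^t\big(\ol b(1,\wt P_s,\wt H_s)-k\big)\,\rd s$ as an extra state coordinate, so that the discounted problem with running reward becomes an undiscounted problem of the canonical form $W(z)=\sup_T\bE_z[A_T]$, i.e.\ a gain that is a function of the current state --- this is what makes Peskir's framework applicable at all; (iii) identifies $W(p,h,a)=V(p,h)-k+a$ using that $V$ is a supremum over stopping rules (\autoref{lem:os} plus \autoref{thm:equiv}); (iv) gets lower semicontinuity of $W$ from the Feller property via \citet[Equation~(2.2.80)]{Peskir}, so the stopping set is closed and $T^*$ is a stopping time, with right-continuity of paths placing $(P_{T^*},H_{T^*})$ in $\{V\leq k\}$ --- your worry about jumps across the frontier is discharged here, not by convexity in $p$; and (v) observes that the cemetery point $\Delta$ lies in the stopping set, giving $\bP(T^*<\infty)=1$, a hypothesis of \citet[Corollary~2.9]{Peskir} that your sketch never addresses. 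Your proposal names the right regularity obstacles, but resolving them is the proof; in particular the killing-plus-augmentation construction and the finiteness of $T^*$ are missing ideas, not routine details.
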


\begin{proof}
For each $(p,h)\in[0,1]\x\bH$, there is a unique solution $(P,H,R)$ of the martingale problem for $(\cG,1)$ by \autoref{ass:wel2}. The family $(P,H)$ of processes, indexed by the initial condition $(p,h)$, is a Feller process. This follows from \citet[Theorem IX.4.39]{Jacod} using similar arguments as in Step 3 of the proof of \autoref{lem:se:app}. Let $(\wt P,\wt H)$ be the killed version of $(P,H)$ with killing rate $\rho$ and let $\Delta$ denote the ``cemetery point'' of the killed process. We refer to \citet[Section II.5.4]{Peskir} for the terminology. Let $\ol b(u,\Delta)=0$ and
\begin{equation}
A_t = A_0 + \int_0^t \big(\ol b(1,\wt P_t,\wt H_t) -k \big)\rd t.
\end{equation}
Then $Z=(\wt P,\wt H,A)$ is a Feller process on the state space $\bZ=([0,1]\x\bH \cup \{\partial\})\x\R$. Let $(\bP_z)_{z\in \bZ}$ denote the family of laws of $Z$
starting from the initial condition $Z_0=z$. There is an associated family of stopping problems
\begin{equation}
W(z) = \sup_T \bE_z(A_T),
\end{equation}
where the supremum is taken over all $\{\cF^Z_t\}$-stopping times. For any $z=(p,h,a)\neq \Delta$, 
\begin{equation}\begin{aligned}
W(z)&=\sup_T \bE_{(p,h,a)}[A_T] =\sup_T \bE_{(p,h,0)}[A_T] + a  
\\ &=
\sup_T \bE_{(p,h,0)}\left[\int_0^T \rho e^{-\rho t} 
\big(\ol b(1,P_t,H_t)-k\big) \rd t\right] +a 
= V(p,h)-k+a,
\end{aligned}\end{equation}
because $V(p,y)$ is a supremum of values of stopping rules by part (a) of \autoref{thm:main}. The stopping set $\bD\subset \bZ$ is defined as in \citet[Equation (2.2.5)]{Peskir} by
\begin{equation}\label{equ:sto_D}
\bD=\big\{z=(p,h,a) \in \bZ\colon W(z) \leq a \big\} 
=\Big(\big\{(p,h)\in [0,1]\x\bH\colon V(p,h) \leq k \big\} \cup \{\Delta\}\Big)\x \R.
\end{equation}
The last equality holds because $W(\partial,a)=a$ by definition. The function $W$ is lower semi-continuous by \citet[Equation (2.2.80)]{Peskir} because $(\wt P,\wt H,A)$ is Feller. Therefore, the set $\bD$ is closed. Then the right-continuity of the filtration implies that 
  \begin{equation}
    T^* = \inf \{t\geq0: X_t \in D\}= \inf \{t: V(P_t,H_t) \leq k \}
  \end{equation}
  is a stopping time. Note that $\Delta \in D$, which implies $\bP(T^*<\infty)=1$. 
  Then \citet[Corollary 2.9]{Peskir} implies that $T^*$ is optimal. 
\end{proof}

\begin{lemma}[Asymptotic learning]\label{lem:asy}
Assume $0<P_0<1$. Then the following statements hold for any control $(U,X,H,R)$ of the problem with partial observations and the corresponding belief process $P$.
\begin{enumerate}[(a)]
	\item Assume that the measures $K_R(1,h,\cdot)$ and $K_R(0,h,\cdot)$ are equivalent for all $h$. Then learning in finite time is impossible, i.e., 
	$0<P_t<1$ holds a.s. for all $t\geq 0$. Moreover, asymptotic learning does not occur if the agent invests only a finite amount of time into the risky arm, i.e.,
	\begin{equation}
		\{\textstyle\int_0^\infty U_t dt < \infty\} \subseteq \{0<P_\infty<1\} \quad \bP\text{-a.s.}
	\end{equation}
	\item Assume that $\Phi(1,\cdot)$ is bounded from below by a positive constant. Then asymptotic learning is guaranteed if the agent invests an infinite amount of time in the risky arm, i.e., 
	\begin{equation}
		\{\textstyle\int_0^\infty U_t dt = \infty\} \subseteq \{P_\infty = X\} \quad \bP\text{-a.s.}
	\end{equation}  
	\item If the conditions of (a) and (b) are satisfied, then asymptotic learning occurs if and only if the agent invests an infinite amount of time in the risky arm: 
	\begin{equation}
		\{\textstyle\int_0^\infty U_t dt = \infty\} = \{P_\infty = X\} \quad \bP\text{-a.s.}
	\end{equation}
\end{enumerate}
\end{lemma}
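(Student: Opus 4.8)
The plan is to recast all three assertions as a single dichotomy for the likelihood ratio between the two conditional laws of $Y$ given the type, governed by the order-$\tfrac12$ Hellinger process, and then to read off the accumulated information directly from the function $\Phi$.

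First I would set up the conditional measures exactly as in Step~1 of the proof of \autoref{lem:fi}: since $0<P_0<1$, condition $\bP$ on $X$ to obtain $\bP_1=\bP(\cdot\mid X=1)$ and $\bP_0=\bP(\cdot\mid X=0)$ with $\bP=P_0\bP_1+(1-P_0)\bP_0$. Writing $Z$ for the $\{\cF^Y_t\}$-likelihood ratio of $\bP_1$ relative to $\bP_0$, Bayes' rule gives $P_t=P_0 Z_t/\big(P_0 Z_t+(1-P_0)\big)$, so that $P_\infty\in(0,1)$ iff $Z_\infty\in(0,\infty)$ and $P_\infty\in\{0,1\}$ iff $Z_\infty\in\{0,\infty\}$. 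As $P$ is a bounded martingale, $P_\infty=\lim_t P_t$ exists a.s.; and since $\bE[(X-P_\infty)^2\one_A]=\bE[\one_A P_\infty(1-P_\infty)]$ for every $\cF^Y_\infty$-measurable event $A$, the set $\{P_\infty\in\{0,1\}\}$ agrees a.s.\ with $\{P_\infty=X\}$. Thus all three statements reduce to controlling whether $Z_\infty$ stays in $(0,\infty)$ or escapes to $\{0,\infty\}$.

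The second step is to identify the Hellinger process $h$ of $(\bP_1,\bP_0)$ on $\{\cF^Y_t\}$ with $h_t=\int_0^t\Phi(U_s,H_s)\,\rd s$. Its continuous part contributes $\tfrac18\phi_1^\top\sigma^2\phi_1$, while the jump part, after substituting $\phi_2=K_R(1,\cdot)/\big((K_R(1,\cdot)+K_R(0,\cdot))/2\big)$, collapses to $\big(\sqrt{K_R(1,\cdot)}-\sqrt{K_R(0,\cdot)}\big)^2$; these are exactly the two summands defining $\Phi$ in \autoref{ass:novikov}. Because the risky arm is the only source of information by \autoref{ass:str} ($\sigma^2(0,\cdot)=0$ and $K(0,\cdot,\cdot)=0$), one gets $\Phi(0,\cdot)=0$ and hence $h_t=\int_0^t\Phi(1,H_s)\,\one_{U_s=1}\,\rd s$. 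Then I would invoke the theory of Hellinger processes (\citet[Ch.~IV]{Jacod}): up to a null set, $\{h_\infty<\infty\}$ coincides with the non-singularity set $\{Z_\infty\in(0,\infty)\}$, whereas on $\{h_\infty=\infty\}$ the measures become singular, with $Z_\infty=0$ $\bP_0$-a.s.\ and $Z_\infty=\infty$ $\bP_1$-a.s. Combined with the first step, $\{h_\infty<\infty\}\subseteq\{0<P_\infty<1\}$ and $\{h_\infty=\infty\}\subseteq\{P_\infty=X\}$ a.s. The three parts now follow from bounds on $\Phi$. For (a): equivalence of $K_R(1,h,\cdot)$ and $K_R(0,h,\cdot)$ makes the stochastic exponentials $D^n=\mathcal E(L^n)$ of \autoref{lem:fi} strictly positive (their jumps $\psi_2-1$ exceed $-1$), so $Z_t\in(0,\infty)$ and $0<P_t<1$ for every finite $t$; and since $\Phi(1,\cdot)$ is bounded by \autoref{ass:novikov}, $\{\int_0^\infty U_t\,\rd t<\infty\}\subseteq\{h_\infty<\infty\}\subseteq\{0<P_\infty<1\}$. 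For (b): the bound $\Phi(1,\cdot)\ge c>0$ of \autoref{ass:asy} gives $h_\infty\ge c\int_0^\infty U_t\,\rd t$, so $\{\int_0^\infty U_t\,\rd t=\infty\}\subseteq\{h_\infty=\infty\}\subseteq\{P_\infty=X\}$. Part (c) is the conjunction of both bounds, giving $\{\int_0^\infty U_t\,\rd t=\infty\}=\{h_\infty=\infty\}=\{P_\infty=X\}$ a.s.

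The delicate step is the dichotomy when $K_R(1,\cdot)$ and $K_R(0,\cdot)$ are not equivalent (the case relevant to (b)): then $\bP_1$ and $\bP_0$ may become singular on some $\cF^Y_t$ through a single distinguishing jump, i.e.\ $P$ can reach $\{0,1\}$ in finite time, and one must check that the Hellinger-process dichotomy still captures learning in this case and that $h$ correctly accounts for the jump contributions under the truncation function $\chi$. A robust way to avoid this bookkeeping is to verify directly, using It\^o's formula for the filtering dynamics of \autoref{ass:G}, that $\sqrt{P_t(1-P_t)}\,\exp(h_t)$ is a non-negative supermartingale; then $h_\infty=\infty$ forces $\sqrt{P_t(1-P_t)}\to0$ and hence $P_\infty=X$ uniformly over the equivalent and non-equivalent cases, while the matching conclusion on $\{h_\infty<\infty\}$ follows from the local equivalence established for part (a).
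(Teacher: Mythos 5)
Your proposal is correct in substance and shares the paper's backbone: condition $\bP$ on $X$ to obtain $\bP_1,\bP_0$, identify the order-$\tfrac12$ Hellinger process of this pair with $\Phi(U,Y)\bullet I$ (the paper stops it at the first time $S$ that $P$ or $P_-$ hits $\{0,1\}$; your unstopped $h$ is harmless because $\sqrt{P(1-P)}$ vanishes on $\llbracket S,\infty\llbracket$), note $\Phi(0,\cdot)=0$ by \autoref{ass:str}, and convert the Novikov upper bound and the positive lower bound of \autoref{ass:asy} into the two inclusions. Your simplification of the jump part of $\Phi$ to $\bigl(\sqrt{K_R(1,\cdot)}-\sqrt{K_R(0,\cdot)}\bigr)^2$ is exactly right, up to an immaterial factor of $2$ between $\ol K(u,\tfrac12,y,\cdot)$ and $K(u,1,y,\cdot)+K(u,0,y,\cdot)$ that also appears in the paper's own computation. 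Where you genuinely depart from the paper is the mechanism for the direction $h_\infty=\infty\Rightarrow P_\infty=X$: the paper invokes Schachermayer's Theorem~1.5, which equates divergence of the Hellinger process with singularity \emph{not} produced by a jump of the density process to zero, and treats the boundary-jump case $\{S\le\tau\}$ separately; you instead propose the supermartingale $\sqrt{P_t(1-P_t)}\,e^{h_t}$. This works: with $f(p)=\sqrt{p(1-p)}$, the drift computed from $\cG$ (after the localization by the $T_n$ of \autoref{lem:fi}, since $f\notin C^2_b$) has continuous part $-f(P)\,\tfrac18\phi_1^\top\sigma^2\phi_1$, and the jump part, using $f(p+j)=f(p)\sqrt{\phi_2(2-\phi_2)}/D$ with $D=p\phi_2+(1-p)(2-\phi_2)$ and the algebraic identity $D+(1-2p)(\phi_2-1)=1$, collapses to $-f(P)\int\bigl(1-\sqrt{\phi_2(2-\phi_2)}\bigr)\ol K(u,\tfrac12,y,\rd z)$; hence $f(P)e^{h}$ is a nonnegative local martingale, thus a convergent supermartingale, and $h_\infty=\infty$ forces $P_\infty\in\{0,1\}$, i.e.\ $P_\infty=X$ by your variance identity. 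This is more self-contained than the paper's route and handles the non-equivalent-kernel case of part (b) uniformly, at the price of the Itô bookkeeping above.

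One imprecision to repair in part (a): strict positivity of the stochastic exponentials $\mathcal E(L^n)$ only rules out \emph{jumps} of $P$ to $\{0,1\}$; it does not by itself exclude $P$ hitting the boundary continuously at a finite accumulation time of the $T_n$. So the claim $0<P_t<1$ for all finite $t$, and more generally the inclusion $\{h_\infty<\infty\}\subseteq\{0<P_\infty<1\}$, still require the converse half of the dichotomy — singularity at $\tau$ without a boundary jump implies $h_\tau=\infty$ — which is precisely the Schachermayer-type criterion (equivalently the predictable criteria of \cite{Jacod}, Ch.~IV) that the paper cites. Note also that the bald statement ``$\{h_\infty<\infty\}$ coincides with the non-singularity set'' is false in general (a single distinguishing jump makes the measures singular with $h$ finite, as in exponential bandits); it becomes valid only after the equivalence of $K_R(1,h,\cdot)$ and $K_R(0,h,\cdot)$ excludes density jumps to zero, which is exactly what that hypothesis buys in (a). With this caveat made explicit, your argument is complete and matches the paper's conclusion.
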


\begin{proof}
\emph{Step 1 (Hellinger process).} 
Let $\bP_1$ and $\bP_0$ be defined by conditioning the measure $\bP$ on the events $X=1$ and $X=0$, respectively.  We want to calculate the Hellinger process $h(\tfrac12)$ of order $\tfrac12$ of the measures $\bP_1$ and $\bP_0$. Let $P_t=\bE[X\mid\cF^Y_t]$ be the belief process. By \autoref{equ:P_via_D}, $P/P_0$ is the density process of $\bP_1$ relative to $\bP$. Similarly, $(1-P)/(1-P_0)$ is the density process of $\bP_0$ relative to $\bP$. For all $p,q \in \R$, let 
\begin{equation}
\psi(p,q)=\frac{p+q}{2}-\sqrt{pq}
\end{equation}
and let $\nu(\rd t,\rd p,\rd q)$ be the compensator of the integer-valued random measure associated the jumps of $(P,1-P)$. Let $S$ be the first time that $P$ or $P_-$ hits zero or one, 
\begin{equation}\label{equ:hell_S}
S= \inf \big\{t \geq 0: P_t\in \{0,1\} 
\text{ or } P_{t-} \in \{0,1\}\big\}.
\end{equation}
By \citet[Lemma III.3.7]{Jacod}, $P$ is constant on $\llbracket S,\infty \llbracket$. Therefore, on this interval, $\langle P^c,P^c \rangle$ is constant and $\nu$ has no charge. After canceling out the terms $P_0$ and $(1-P_0)$, the formula for $h(\tfrac12)$ given in \citet[Theorem IV.1.33]{Jacod} reads as
\begin{equation}\label{equ:h}\begin{aligned}
h(\tfrac12) 
&= 
\frac18 \left( \frac{1}{P_-^2} \bullet \langle P^c,P^c \rangle 
- \frac{2}{P_-(1-P_-)} \bullet \langle P^c,1-P^c \rangle 
+ \frac{1}{(1-P_-)^2} \bullet \langle 1-P^c,1-P^c \rangle \right) 
\\ &\qquad
+  \psi\left(1+\frac{p}{P_-},1+\frac{q}{1-P_-}\right) 
* \nu(\rd t,\rd p,\rd q) 
\\&=
\frac18 \left( \frac{1}{P_-} + \frac{1}{1-P_-} \right) 
\bullet \langle P^c,P^c \rangle 
\\&\qquad
+  \psi\left(1+\frac{j(U,P_-,Y_-,z)}{P_-},1-\frac{j(U,P_-,Y_-,z)}{1-P_-}\right) 
* \ol K(U,P_-,Y_-,\rd z)\rd t
\\&=
\frac18 \phi_1(U,Y)^\top\si^2(U,Y)\phi_1(U,Y) \bullet I^S 
\\&\qquad
+  \psi\Bigg(\frac{\phi_2(U,Y_-,z)}
{P_- \phi_2(U,Y_-,z)+(1-P_-)\big(2-\phi_2(U,Y_-,z)\big)},
\\&\qquad\qquad\qquad
\frac{2-\phi_2(U,Y_-,z)}
{P_- \phi_2(U,Y_-,z)+(1-P_-)\big(2-\phi_2(U,Y_-,z)\big)}\Bigg) 
\one_{\llbracket 0,S \rrbracket}* \ol K(U,P_-,Y_-,\rd z)\rd t
\\&=
\frac18 \phi_1(U,Y)^\top\si^2(U,Y)\phi_1(U,Y) \bullet I^S 
\\&\qquad
+  \int \frac{1-\sqrt{\phi_2(U,Y,z)\big(2-\phi_2(U,Y,z)\big)}}
{P \phi_2(U,Y,z)+(1-P)\big(2-\phi_2(U,Y,z)\big)}
\ol K(U,P,Y,\rd z) \bullet I^S
\\&=
\frac18 \phi_1(U,Y)^\top\si^2(U,Y)\phi_1(U,Y) \bullet I^S 
\\&\qquad
+  \int \left(1-\sqrt{\phi_2(U,Y,z)\big(2-\phi_2(U,Y,z)\big)}\right) 
\ol K(U,1/2,Y,\rd z) \bullet I^S 
= \Phi(U,Y) \bullet I^S,
\end{aligned}\end{equation} 
where $\Phi$ is defined in \autoref{ass:novikov}.

\emph{Step 2 (Finite investment prevents asymptotic learning).} 
We define stopping times $T$ and $T_n$ as in \autoref{equ:T,equ:T_n}. $T$ is the first time that $P$ or $P_-$ hits zero and $T_n$ announces $T$. Let us assume for contradiction that $P$ jumps to zero, i.e., $P_{T-}>0$. Then $T_n=T$ holds for all sufficiently large $n$. Consequently, the process $D^n=\mathcal E(L^n)=P^{T_n}/P_0$ defined in \autoref{equ:Ln} also jumps to zero. Therefore, $L^n$ has a jump of height $-1$. This is not possible because $\phi_2(u,y,z)>0$ holds by the assumption that $K(u,1,y,\cdot)$ and $K(u,0,y,\cdot)$ are equivalent. This proves that $P$ does not jump to zero. A similar argument where the r\^oles of $\bP_0$ and $\bP_1$ are reversed shows that $P$ cannot jump to one. It follows that for any stopping time $\tau$, 
\begin{alignat}{5}
  	\label{equ:hell1}&\{h(\tfrac12)_\tau=\infty\}&&=\{S\leq\tau,P_{S-}=0\}
      &&=\{P_\tau=0\}&&=\{P_\tau=0 \text{ or } P_\tau=1 \} &\quad& \bP_0\text{-a.s.,}\\
    \label{equ:hell2}&\{h(\tfrac12)_\tau=\infty\}&&=\{S\leq\tau,P_{S-}=1\}
      &&=\{P_\tau=1\}&&=\{P_\tau=0 \text{ or } P_\tau=1 \} &\quad& \bP_1\text{-a.s.}
\end{alignat}
In \autoref{equ:hell1,equ:hell2}, the first equality holds by \citet[Theorem 1.5]{Schachermayer}. This theorem states that the divergence of the Hellinger process is equivalent to the mutual singularity of the measures $\bP_1$ and $\bP_0$, but in such a way that the singularity is not obtained by a sudden jump of the density process to zero or one. The second equality holds because such jumps are not possible by the previous claim. For the third equality, see \citet[Proposition III.3.5.(ii)]{Jacod}. By \autoref{ass:str}, the safe arm reveals no information about the hidden state $X$, resulting in $\Phi(0,y)=0$. Together with \autoref{ass:novikov} bounding $\Phi$ from above, \autoref{equ:hell1,equ:hell2} imply
\begin{equation}
  	\{\textstyle\int_0^\infty U_t dt < \infty \} \subseteq \{h(\tfrac12)_\infty < \infty \} = \{0<P_\infty<1\} \quad \bP\text{-a.s.}
\end{equation}
  This proves (a).

\emph{Step 3 (Infinite investment induces asymptotic learning).}
Let $\tau$ be a stopping time. If $S$ does not occur before $\tau$ and $\int_0^\tau U_t dt=\infty$, then $h(\tfrac12)_\tau=\infty$ because of the lower bound $\inf_y \Phi(1,y)>0$. Therefore,
\begin{equation}
\{\textstyle\int_0^\tau U_t dt = \infty \} \subseteq \{ h(\tfrac12)_\tau<\infty\} \cup \{S \leq \tau\}.
\end{equation}
Moreover, it follows from \citet[Theorem 1.5]{Schachermayer} that
\begin{alignat}{4}
    &\{ h(\tfrac12)_\tau<\infty\} \cup \{S \leq \tau\}
      &&=\{S\leq\tau,P_{S-}=0\}\cup\{S\leq\tau\}=\{P_\tau=X\} &\quad&\bP_0\text{-a.s.,}\\
    &\{ h(\tfrac12)_\tau<\infty\} \cup \{S \leq \tau\}
      &&=\{S\leq\tau,P_{S-}=1\}\cup\{S\leq\tau\}=\{P_\tau=X\} &\quad&\bP_1\text{-a.s.}
\end{alignat}
It follows that
\begin{equation}
\{\textstyle\int_0^\tau U_t dt = \infty \} \subseteq \{P_\tau=X\} \quad \bP\text{-a.s.,}
\end{equation}
which proves (b). Finally, (c) follows from (a) and (b). 
\end{proof}

\printbibliography
\end{document}